\documentclass[reqno]{amsart}

\usepackage{amssymb}
\usepackage{fancybox}
\usepackage{xcolor}
\usepackage{appendix}
\usepackage{pifont}
\usepackage{mathrsfs}
\usepackage{leftidx}   
\usepackage{comment}   

\usepackage{natbib}

\usepackage{enumitem} 
\setlist[itemize]{noitemsep} 
\usepackage{enumitem}  

\usepackage{times}  

\newtheorem{theorem}{Theorem}[section]
\newtheorem{lemma}[theorem]{Lemma}

\newtheorem{proposition}[theorem]{Proposition}
\newtheorem{assumption}[theorem]{Assumption}
\newtheorem{remark}[theorem]{Remark}

\numberwithin{equation}{section}

\newcommand{\abs}[1]{\lvert#1\rvert}

 \newcommand{\rom}[1]{
  \textup{\lowercase\expandafter{\romannumeral#1}} }
  
\newcommand{\Rom}[1]{
  \textup{\uppercase\expandafter{\romannumeral#1}} }
  
\mathchardef\mhyphen="2D

\newcommand{\reals}{{\mathbb R}}
\newcommand{\bbr}{\reals}
\newcommand{\bbz}{{\mathbb Z}}
\newcommand{\bbn}{{\mathbb N}}
\newcommand{\PP}{\mathbb{P}}
\newcommand{\EE}{\mathbb{E}}

\newcommand{\one}{{\bf 1}}
\newcommand{\vep}{\varepsilon}

\newcommand{\eid}{\stackrel{d}{=}}

\newcommand{\calF}{\mathcal{F}}


\begin{document}

\title[Extremes of semi-exponential processes]{A new shape of extremal
  clusters for certain stationary semi-exponential processes with
  moderate long range dependence}

\author{Zaoli Chen}

\address{Department of Mathematics, Cornell University, Ithaca, New York 14850, USA}

\email{zc288cornell.edu}

\author{Gennady Samorodnitsky}
\address{School of Operation Research and Information Engineering,
Cornell University, Ithaca, New York 14850, USA}
\email{gs18@cornell.edu}
\thanks{This research was partially supported by the ARO grant 
 W911NF-18 -10318 and the NSF grant
  DMS-1506783 at Cornell  University}

\subjclass[2000]{Primary 60G70, 60F17; Secondary 60G57.}


\keywords{Extreme value theory, long range dependence, random
  sup-measure, stable regenerative set, subexponential distributions,
  semi-exponential distributions, Gumbel maximum domain of attraction.}

\begin{abstract}
Extremal clusters of stationary processes with long memory can be
quite intricate. For certain stationary infinitely
divisible processes with subexponential tails, including both
power-like tails and certain lighter tails, e.g. lognormal-like
tails. such  clusters may take the shape of  stable regenerative
sets. In this paper  we show that for semi-exponential tails, which
are even lighter, a new shape of extremal clusters arises. In this
case each stable regenerative set supports a random panoply of varying
extremes.  
\end{abstract}

\maketitle


\section{Introduction} \label{sec;Intro}

In this paper, we study the extremes of certain stationary infinitely
divisible processes with  long range dependence. The marginal
log-tails are, roughly, of the order $-x^{-\alpha}$ for some
$0<\alpha<1$. We call such tails semi-exponential, and the exact
definition and the assumptions are given in
Section \ref{sec;SemiExpPro}. Such marginal tails are in the Gumbel
maximum domain of attraction, and the assumptions we impose will also
guarantee that these tails are subexponential.

Extremal limit theorems for such processes are interesting from
several points of view. First of all, how do the extreme values of
such processes cluster? Second, to what extent does the ``single large
jump'' heuristic hold for such processes? This principle usually governs both extreme values and large deviations of
weakly dependent subexponential stochastic systems.  
Extremal clusters appear when the values of the process at distinct
time points are tail (asymptotically) dependent; see
\cite{resnick:2007}. The size  of extremal clusters is a
subject of long-standing interest, and the notion of extremal index due to
\cite{davis:1982} and \cite{leadbetter:1983} is specifically designed
to quantify this size. When the stationary process has long memory
affecting the extreme values of the process in the sense of
\cite{samorodnitsky:2016:SPLRD}, the extremal clusters may become so large
that scaling is necessary to obtain a finite limit, and then it
becomes possible to talk about the limiting shape of an extremal
cluster. For certain classes of stationary infinitely divisible
processes with subexponential tails, this limiting shape is a   random
fractal, specifically a stable
regenerative set,  supporting one extreme value.
This has been shown by \cite{lacaux:samorodnitsky:2016} in the case
when the process has regularly varying tails (which are, of course, in
the Fr\'echet maximum domain of attraction)  and by
\cite{chen:samorodnitsky:2020} in the case when the process has
certain marginal tails in 
the Gumbel maximum domain of attraction. The results of the latter
paper required, however, that these marginal tails were not too
light. In particular, \cite{chen:samorodnitsky:2020} allowed lognormal-like marginal
tails but excluded semi-exponential marginal tails, whose
limiting shape of the extremal clusters remained unknown.
In this paper, we solve this problem and characterize the limiting shape of
the extremal clusters. While the
clusters are still supported by stable regenerative sets, a new random
structure appears, in which related but different extreme values are placed in randomly
chosen locations over each stable regenerative set. 

The new shape of the extremal clusters is related to a failure of  the
``single large jump'' principle: the extreme values of the process are
caused by multiple large values of the underlying Poisson random
measure. Recall that a distribution $H$ on $[0,\infty)$ 
is called subexponential if in the usual notation for distributional
tails, 
\begin{equation} \label{eq;intro0.3}
  \lim_{x\to \infty} \frac{\overline{H\ast H}(x)}{\overline{H}(x)} =
  2. 
\end{equation}
``Single large jump'' is a widely used heuristic for processes with
subexponential tails; see
$e.g.$ \cite{foss:konstantopoulos:zachary:2007}. This
principle already fails in the case of heavier tails in the Gumbel
maximum domain of attraction, see
\cite{chen:samorodnitsky:2020}. But in the case of
semi-exponential tails, this failure is even more dramatic. In one of
our limit theorems 
we obtain a new limiting process of two parameters. It can be viewed as
a bridge between  the standard Gumbel extremal
process  of \cite{resnick:rubinovitch:1973} and the time-changed extremal
process  of \cite{chen:samorodnitsky:2020}. When the parameters of the new process tend to some of their boundary values, either of the latter two processes can be recovered.

The paper is organized as follows. Section \ref{sec;Pre} reviews the
main notions and tools we will use throughout the
paper: random closed sets, null recurrent Markov chains, distributions
in the Gumbel domain of attraction and random sup-measures.
Section \ref{sec;SemiExpPro} describes the stationary infinitely
divisible process whose extremes are to be analyzed. We state the two main
extremal limit theorems, establishing weak convergence in the spaces of
sup-measures and c\`adl\`ag functions respectively. This
section also contains most of the proofs. Several auxiliary proofs are
postponed to the  two  Appendices.

The adjective ``moderate'' decorating the term ``long range
dependence'' in the title of the paper is due to the restricted range
$\beta \in (0,1/2)$ of the parameter responsible for  the
long memory. What happens if memory becomes even longer, $i.e.$ $\beta
\in (1/2,1)$, remains a subject of future investigations, and we expect to get 
limit theorems with non-Gumbel limits. When  
 the marginal tails are regularly varying, 
non-Fr\'echet limits in this range of $\beta$ are established in
\cite{samorodnitsky:wang:2019}.

The following notation will be used  throughout the paper. We denote the set of natural numbers $\{1,2,\ldots\}$ by $\bbn$. For a nondecreasing function $H$ on $\mathbb{R}$, the inverse of
  $H$ is defined by  $ H^\leftarrow (x) = \inf \{ s : H(s) \geqslant x
  \}$, with the usual convention $\inf \emptyset = \infty$. Further, the tail of a measure $\nu$  on $\reals$ is
  $\overline{\nu}(x)=\nu(x,\infty)$. In particular 
  $\overline{F}(x)$ is the tail of a distribution $F$.
  We will use the following symbols when comparing positive sequences. 
 \begin{enumerate} [label=(\alph*)]
    \item  $a_n \sim b_n$ if 
      $\lim_{n \to \infty}a_n / b_n =1$,

     \item  $a_n \lesssim b_n $ if there exist $C>0$ such that $a_n
       \leqslant C b_n$  for large enough $n$, and analogously with
       $a_n \gtrsim b_n $, 

    \item $a_n \asymp b_n$ if both $a_n \lesssim b_n $ and $a_n
      \gtrsim b_n $.
    \end{enumerate}
    If $\{A_n\}_{n \in \mathbb{N}}$ and $\{B_n\}_{n \in \mathbb{N}}$
    are two sequences of positive random variables, we write
    \begin{enumerate} [label=(\alph*)]
    \item $A_n = o_P(B_n)$ if $A_n / B_n \to 0$ in probability, 
    \item $A_n\lesssim_P B_n$ if  $(A_n/B_n)$ is tight, and
      analogously with $A_n \gtrsim_P B_n$. 
\end{enumerate}

\section{Preliminaries}   \label{sec;Pre}

\subsection{Random Closed Sets}  \label{sec;RSC}
Random closed sets play a key role in many parts of this paper,
particularly in the description of the main limiting objects. This section is an overview of mostly well-known facts about
random closed sets. Unless stated otherwise, these facts are taken
from  \cite{molchanov:2017}.

We work with an underlying space $E$, which will be either 
 $[0,1]$ or $\bbr_+$. We write $\mathcal{G},\mathcal{F}$,
 $\mathcal{F}^\prime$,  $\mathcal{K}$ and $\mathcal{K}^\prime$ for the
 family of open, closed, nonempty closed,  compact and nonempty
 compact sets in $E$, respectively.  If we want to emphasize the choice of $E$, we will use a notation of the type $\mathcal{F}([0,1])$. For any $A\subset E$, we define
\begin{align}
    &\mathcal{F}^A = \{ F \in \mathcal{F}: F \cap A = \emptyset \} ,    \label{eq;miss} \\
    & \mathcal{F}_A  = \{  F \in \mathcal{F}: F \cap A \neq \emptyset \} .    \label{eq;hit}
\end{align}
The \textit{Fell} topology on $\calF$ 
is generated by the sub-basis consisting of    $\{\mathcal{F}_G : G
\in \mathcal{G} \}$ and $\{ \mathcal{F}^K : K \in  \mathcal{K} \}$,
under which the space $\mathcal{F}$ is compact and metrizable. In the
case $E=[0,1]$,  $\mathcal{F}=\mathcal{K}$ and
the Fell topology on $\mathcal{F}$ agrees with the so-called
\textit{myopic} topology on $\mathcal{K}$. In particular,
$\mathcal{F}^\prime$, with the subspace topology, is metrizable by the
Hausdorff metric  
\begin{equation}  \label{eq;HdfMtc}
    \rho_\text{H} (F_1,F_2) : = \max \left\{
    \sup_{x\in F_1} \rho(x, F_2), \sup_{x \in F_2} \rho
    (F_1, x)
    \right\}
    , \quad F_1, F_2 \in \mathcal{F}^\prime,
\end{equation}
where  $\rho(\cdot , \cdot)$ is the standard distance function 
\begin{equation}  \label{eq;psMtc}
    \rho(x,F) = \min \{ \abs{ x  - y } : y \in F   \}.
\end{equation}
We will also use another common  
distance function 
\begin{equation}  \label{eq;ssMtc}
    \rho(F_1,F_2) = \min \{ \abs{x-y}: x\in F_1, \, y\in F_2\}.
\end{equation}

For either choice of $E$, a \textit{random closed set} is a measurable
mapping from a probability space $(\Omega,\mathscr{F},\PP)$ to
$(\mathcal{F},\mathcal{B}(\mathcal{F}))$. A  sequence of random closed
sets $\{F_n\}_{n\in  \bbn}$ weakly converges to $F$ if 
\begin{equation} \label{eq;RSCwkcvg}
    \lim_{n\to \infty}
    \PP \{ F_n \cap A \neq \emptyset \} 
    = \PP \{ F \cap A \neq \emptyset \} , \quad
    \text{for any } A \in \mathcal{A} \cap \mathfrak{S}_F.
\end{equation}
Here, $\mathcal{A}$ is the collection of all finite unions of open intervals, and $\mathfrak{S}_F$ is the
collection of all continuity sets of $F$: 
\begin{equation} \label{eq;F-cont}
    \Big\{ B \in \mathcal{B}: B \text{ is relatively compact and } \PP\{ F \cap \text{cl } B \neq \emptyset \} = \PP \{   F \cap \text{int } B \neq \emptyset \} \Big\}.
\end{equation}

  We now introduce the random closed sets
of our primary concern. 
For a $\beta \in (0,1/2)$, let $\{ Z(t) \}_{t \in \bbr_+}$ be a standard $\beta$-stable subordinator, which is 
an increasing L\'{e}vy process with the Laplace transform 
\begin{equation}  \label{eq;betaSUB}
   \mathbb{E}\exp \left\{ -\theta Z (t)   \right \} 
    =  \exp \left\{ - t \theta^\beta  \right \}, \quad \theta \in \bbr_+. 
\end{equation}
The $\beta$-\textit{stable regenerative set} $R$ is the closure of the range of $\{Z(t)\}_{t\in \bbr_+}$,
\begin{equation} \label{eq;betaSRS}
    R  =  \text{cl} \left\{ Z(t) : t \in \bbr_+ \right \} . 
\end{equation}

Next, take a random variable $Z^\ast(0)$  independent of  $\{ Z(t)
\}_{t \in \bbr_+}$ with the  distribution
\begin{equation} \label{eq;shiftSUB0.0}
    \mathbb{P}\{ Z^\ast(0) \leqslant x \} = x ^{1-\beta}, \quad x \in [0,1].
\end{equation}
We define the process
\begin{equation} \label{eq;shiftSUB0.1}
    Z^\ast(t) = Z^\ast(0) + Z(t), \quad t \in \bbr_+,
\end{equation}
whose range induces another random closed set
\begin{equation} \label{eq;shiftSRS}
    R^\ast  := \text{cl } \{ Z^\ast(t) : t \in \bbr_+ \}= Z^\ast(0) + R.
  \end{equation}

Let $m^\phi$ be the measure associated with the dimension (or gauge)
function
\begin{equation} \label{e:phi}
\phi(x) =  x^\beta (\log \abs{\log x})^{1-\beta}. 
\end{equation}
According  to Theorem 1 in \cite{taylor:wendel:1966}, there is a
finite positive constant $c_\beta$ such that on an event of
probability 1,
\begin{equation} \label{eq;HdfMea}
 m^\phi ( R \cap [0,t] )     = c_\beta Z ^ \leftarrow (t) \ \ \text{for all $t \in \bbr_+$.}
 \end{equation}
Note that $Z^\leftarrow(\cdot)$ is a standard Mittag-Leffler process,
which is self-similar with exponent $\beta$ and has continuous sample
paths. An immediate consequence of \eqref{eq;HdfMea} is that on the
same event of probability 1,
\begin{align} \label{eq;HdfMea*}
& m^\phi ( R^\ast \cap [0,t] ) 
    = c_\beta  Z ^ {\ast \leftarrow }(t) \ \ \text{for all $t \in \bbr_+$.}
\end{align} 

In the sequel, we will be mostly interested in the restriction 
\begin{equation} \label{eq;barSRS}
 \overline{R^\ast}:=R^\ast \cap [0,1]
\end{equation}
of $R^\ast$ to the unit interval. Furthermore, we will need to sample
points from this restriction according to the normalized measure
$m^\phi$ on it. We now set up a technical framework for doing
so. Assuming  the random set $R^\ast$ is defined on some probability
space $(\Omega,\mathscr{F},\PP)$, let $\{U_i\}_{i\geqslant 1}$ be 
 a sequence of $i.i.d.$ standard uniform random variables defined on
 another probability space, say, $(\Omega_1,\mathscr{F}_1,\PP_1)$. For 
$\omega \in \Omega$ define 
\begin{equation}  \label{eq;eta_omega}
    \eta_\omega: [0,1] \to [0,1], \quad
    t \mapsto
    \frac{m^\phi ( R^\ast(\omega) \cap [0,t] )}{m^\phi ( R^\ast(\omega) \cap [0,1] )}
\end{equation}
if the denominator is positive, while setting $\eta_\omega(t)\equiv t$
for the $\omega$ in the zero probability event that the denominator
vanishes. We define  
\begin{equation} \label{eq;barRsi}
    J_i(\omega,\omega_1)  =  \eta^\leftarrow_\omega( U_i(\omega_1) ),
    \    i=1,2,\ldots, \ (\omega,\omega_1) \in (\Omega\times \Omega_1),
  \end{equation}
and view $\bigl(\{ Z^{\ast \leftarrow}(t)\}_{t\in \bbr_+},
\overline{R^\ast}, \{ J_i\}: i \in \bbn \}  \bigr)$ as a random element
of the space $C[0,\infty)\times \mathcal{F}([0,1])\times\bigl(\mathcal{F}([0,1])\bigr)^\infty$,  defined on
the product probability space $\bigl( \Omega\times \Omega_1,
\mathscr{F}\times  \mathscr{F}_1, \PP\times\PP_1\bigr)$. The law of
this random element will be very important in the sequel.  It follows
from \eqref{eq;HdfMea*} that  for $\PP-a.s.$ $\omega \in \Omega$
\begin{equation}\label{eq;barCL0.0}
    \PP_1 \left\{ 
     \cup_{i\geqslant 1} \{  J_i(\omega,\omega_1) \}  \subset
    \overline{R^\ast} 
    \right \} =
    \PP_1 \left\{ 
    \text{cl}  \left( \cup_{i\geqslant 1} \{  J_i(\omega,\omega_1)  \} \right) =
    \overline{R^\ast}    \right \} = 1 ,
\end{equation}
 Therefore also 
 \begin{equation}\label{eq;barCL0.1}
   \PP \left\{ 
     \cup_{i\geqslant 1} \{  J_i(\omega,\omega_1) \}  \subset
    \overline{R^\ast} 
    \right \} =
    \PP \left\{ 
    \text{cl}  \left( \cup_{i\geqslant 1} \{  J_i(\omega,\omega_1)  \} \right) =
    \overline{R^\ast}    \right \} = 1 .   
\end{equation}

\subsection{Null Recurrent Markov Chains  }   \label{sec;MC}

We introduce certain null recurrent Markov chains
from an  ergodic  theoretic perspective. 
More details can be found in \cite{aaronson:1997:surv50}. 

Let $\{Y_t\}_{t \in \mathbb{Z}}$ be an irreducible, aperiodic, and
null recurrent Markov chain on $\mathbb{Z}$. We specify a unique invariant measure $(\pi_i)_{i\in\mathbb{Z}}$ by taking $\pi_0 =1$.
Let $(E,\mathcal{E})$ be the path space $(\bbz^\bbz,
\mathcal{B}(\bbz^\bbz))$ and $\PP_i(\cdot)$  the law $\PP\left\{
  \cdot \, \vert \, Y_0 = i \right\}$ induced by the trajectories of
the Markov chain on $E$. 
Setting 
\begin{align}
 &\mu(\cdot) = \sum_{i\in \bbz} \pi_i \PP_i(\cdot) ,     \label{eq;mu} \\
 &\theta: E \to E, \quad (\cdots, y_0, y_1, y_2,\cdots) \mapsto (\cdots, y_1, y_2, y_3,\cdots),   \label{eq;leftshift}
\end{align}
makes $(E,\mathcal{E},\mu,\theta)$ a 
measure preserving, conservative and ergodic dynamical system, see \cite{harris:robbins:1953}.

We consider the
\textit{first visit time} to $0$, 
\begin{equation}  \label{eq;1STvisit}
    \varphi(y) = \inf \{ t \geqslant 1 : y_t = 0 \}, \quad y \in E, 
\end{equation}
and adopt the following assumption. 
\begin{assumption}  \label{ass;MCmemo}
For some $\beta \in (0,1/2)$ and slowly varying function 
$L$, 
\begin{align}
& \overline{F}(n):= \mathbb{P}_0 \{ \varphi > n \}
    = n^{-\beta} L (n),      \label{eq;MCmemo0.0} \\
& \sup_{n \geqslant 0 }\frac{n \mathbb{P}_0\{ \varphi = n\}}{\overline{F}(n)} < \infty.     \label{eq;MCmemo0.1}
\end{align} 
\end{assumption}

\begin{remark} \label{rmk;Mcmemo}
The main results of the paper require the assumption
(\ref{eq;MCmemo0.1}), see Theorem B in \cite{doney:1997}.
\end{remark}

The \textit{wandering rate sequence} $\{w_n\}_{n \geqslant 0}$ is defined
by
\begin{equation} \label{eq;wander0.1}
 w_n  = \mu \left\{ \cup_{k=0}^n A_k \right \} \ \ \text{with} \  \ A_n
 = \{ y \in E : y_n = 0 \}, \   n \in \bbn_0.
\end{equation}
Under  \eqref{eq;MCmemo0.0}   it follows from Lemma 3.3 in
\cite{resnick:samorodnitsky:xue:2000} that 
\begin{equation}  \label{eq;wander0.2}
    w_n \sim   n^{1-\beta} L(n)  \big/ (1-\beta).
\end{equation}

The extreme value analysis in this paper requires 
certain additional delicate details hidden in
$(E,\mathcal{E},\mu,\theta)$. For each $n\in \bbn_0$, we define a
probability measure on $E$ by 
\begin{equation}  \label{eq;mu_n}
    \mu_n (\cdot)  = \mu \big(\cdot \cap \bigcup_{k=0}^n A_k \big ) \big/ w_n. 
\end{equation}
 Let $\{ Y^{(k;n)} \}_{k\in \bbn_0}$ be $i.i.d.$ random elements in $E$
 with law $\mu_n$. We are interested in the (random) zero sets 
\begin{equation}  \label{eq;Ikn}
    I_{k;n} = \{  0 \leqslant t \leqslant n : Y^{(k;n)}_t = 0 \}, \  k\in \bbn_0,
\end{equation}
and their intersections. For fixed  $n,k \in \bbn$ we define 
\begin{equation}  \label{eq;jk,1;n}
    j_{k,1;n}  = \inf \{ j > k : I_{j;n} \cap I_{k;n}
    \neq \emptyset\} 
\end{equation}
and continue inductively by setting for $i\geq
2$, 
\begin{equation}  \label{eq;jk,i;n}
    j_{k,i;n} = \inf \{ j > j_{k,i-1;n} : I_{j;n} \cap I_{k;n}
    \neq \emptyset\}  
  \end{equation}
if $j_{k,i-1;n}<\infty$ and $ j_{k,i;n}=\infty$ otherwise. For $i\geq
1$, on the event $\{ j_{k,i;n}<\infty \}$ we define  
\begin{equation}  \label{eq;Ik,i;n}
    I_{k,i;n} = I_{k;n} \cap I_{j_{k,i;n};n}.
  \end{equation}

For $k\in\bbn$, consider the random probability
\begin{equation} \label{eq;pk;n}
    \overline{p}_{k;n} : = \PP \left\{
    I_{k;n} \cap I_{0;n} \neq \emptyset \, \big| \, I_{k;n}
    \right \},
  \end{equation}
and note that, conditionally on $I_{k;n}$,  $j_{k,1;n}-k$ is
geometrically distributed with success probability $
\overline{p}_{k;n}$. The following theorem is interesting on its own
right. We precede it with some notation. Let $\bigl(\{
Z_k^{\ast\leftarrow}(t)\}_{t\in \bbr_+}, 
\overline{R_k^\ast}, \{ J_{k,i}\}: i\in \bbn\bigr), \,
k=1,2,\ldots$ be $i.i.d.$ copies of the random element $\bigl(\{
Z^{\ast \leftarrow}(t)\}_{t\in \bbr_+}, 
\overline{R^\ast}, \{ J_i\}: i\in \bbn \bigr)$  
in $C[0,\infty)\times \mathcal{F}([0,1])\times \bigl(\mathcal{F}([0,1])\bigr)^\infty$ constructed in
Section \ref{sec;RSC}. Let $\mathbf{\Gamma}_k, \, k=1,2,\ldots$ be
an independent of them i.i.d. sequence of unit rate Poisson processes on
$(0,\infty)$. That is, each $\mathbf{\Gamma}_k=\{ \Gamma_{k,i} \}_{i\in \bbn}$ consists of the arrival times of a unit rate Poisson processes on
$(0,\infty)$ (listing the points
of $\mathbf{\Gamma}_k$ in the increasing order). 

\begin{theorem} \label{thm;4joint}
Under Assumption \ref{ass;MCmemo} there is a constant $c_\infty \in
(0,1)$ such that  for any $K,m \in \bbn$
\begin{align}\label{eq;4joint0.0}
   & \left( 
    \frac{w_n}{\vartheta_n} \overline{p}_{k;n}, \bigl(
  j_{k,i;n}\overline{p}_{k;n}\bigr)_{1\leqslant i \leqslant m} ,  
   \frac1n  I_{k;n},  \bigl( I_{k,i;n}/n\bigr) _{1\leqslant i \leqslant m}
    \right )_{1\leqslant k \leqslant K} \\
    \Rightarrow 
   & \left( 
    c_\infty  Z_k^{\ast \leftarrow}(1), 
    \bigl( \Gamma_{k,i}\bigr)_{1\leqslant i \leqslant m},
    \overline{R_k^\ast}, 
    \left\{  {J_{k,i}  } \right \}_{1\leqslant i \leqslant m}
    \right)_{ 1\leqslant k \leqslant K} \notag 
\end{align}
as $n\to\infty$, weakly 
in the space  $\left(\mathbb{R}_+^ {1+m}\times \left(\mathcal{F}([0,1])
\right)^{1+m}\right)^K$, where  
\begin{equation} \label{eq;vartheta_n}
    \vartheta_n = \frac{(2-\beta)n^\beta}{\beta L (n)}, \ 
       n \in \bbn. 
\end{equation} 
\end{theorem}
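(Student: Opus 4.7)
The plan is to establish the joint convergence in four stages, exploiting the conditional independence structure of the i.i.d.\ family $\{Y^{(k;n)}\}$ drawn from $\mu_n$. The key observation enabling a reduction across $k$ is that each $\overline{p}_{k;n}$ is of order $\vartheta_n/w_n \asymp n^{2\beta-1}/L(n)^2 \to 0$, so the events in which distinct hosts $I_{k_1;n}$ and $I_{k_2;n}$ interact---either $I_{k_2;n}$ itself being an intersection partner for $I_{k_1;n}$, or a single probe $Y^{(j;n)}$ hitting two different hosts---have probabilities of second order. This yields asymptotic independence across $k=1,\dots,K$, so I would first treat a single fixed $k$ and recover the $K$-fold product by this decoupling argument at the end.

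For a single $k$, the convergence $I_{k;n}/n \Rightarrow \overline{R_k^\ast}$ is essentially the standard statement that the zero set of a null-recurrent Markov chain under $\mu_n$, rescaled by $n$, converges to a shifted stable regenerative set; the shift by $Z^\ast(0)$ with distribution $x^{1-\beta}$ on $[0,1]$ emerges from the wandering-rate analysis in \eqref{eq;mu_n}--\eqref{eq;wander0.2}. For the scaled success probability I would write
\begin{equation*}
  \overline{p}_{k;n} = \PP\bigl\{ I_{0;n} \cap I_{k;n} \neq \emptyset \,\bigm|\, I_{k;n}\bigr\}
\end{equation*}
as a conditional renewal hitting probability and, using the $\phi$-Hausdorff gauge estimates behind \eqref{e:phi} together with Theorem B of \cite{doney:1997}, show that its leading-order behaviour is proportional to $m^\phi(I_{k;n}/n)$ after rescaling. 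In view of \eqref{eq;HdfMea*} this gives $(w_n/\vartheta_n)\overline{p}_{k;n} \Rightarrow c_\infty Z_k^{\ast\leftarrow}(1)$, with $c_\infty$ absorbing $c_\beta$ and the explicit normalization constants in $\vartheta_n$ and $w_n$.

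Conditionally on $I_{k;n}$, the increments $j_{k,i;n} - j_{k,i-1;n}$ (with $j_{k,0;n}:=k$) are i.i.d.\ geometric with mean $1/\overline{p}_{k;n}$, since the probes $Y^{(j;n)}$ for $j>k$ are i.i.d.\ and independent of $Y^{(k;n)}$. Standard geometric-to-exponential convergence then yields that $(\overline{p}_{k;n}\, j_{k,i;n})_{1\leq i\leq m}$ converges conditionally on $I_{k;n}$ to the arrival times $(\Gamma_{k,i})_{1\leq i\leq m}$ of a unit rate Poisson process, independent of the host limits $\overline{R_k^\ast}$ and $Z_k^{\ast\leftarrow}(1)$.

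The main obstacle is the joint convergence of the intersection sets $I_{k,i;n}/n$ to the sample points $J_{k,i}\in \overline{R_k^\ast}$. Heuristically, a probe conditioned to hit $I_{k;n}$ chooses where to land in proportion to the $\phi$-Hausdorff mass of each piece of $I_{k;n}$, so its landing point, rescaled by $n$, is asymptotically distributed on $\overline{R_k^\ast}$ with density $m^\phi/m^\phi(\overline{R_k^\ast})$, i.e.\ according to the law of $J_{k,1}$ from \eqref{eq;eta_omega}--\eqref{eq;barRsi}. To make this rigorous I would prove, for continuity intervals $[a,b]\subset[0,1]$ of $\overline{R_k^\ast}$,
\begin{equation*}
  \frac{\PP\bigl\{ I_{0;n}\cap I_{k;n}\cap [an,bn] \neq \emptyset \,\bigm|\, I_{k;n}\bigr\}}
       {\PP\bigl\{ I_{0;n}\cap I_{k;n}\neq\emptyset \,\bigm|\, I_{k;n}\bigr\}}
   \;\longrightarrow\;
  \frac{m^\phi(\overline{R_k^\ast}\cap [a,b])}{m^\phi(\overline{R_k^\ast})}
\end{equation*}
with enough uniformity to accommodate $m$ independent probes simultaneously and to guarantee that the rescaled intersection $I_{k,i;n}/n$ collapses to a single point rather than a nontrivial subset of $\overline{R_k^\ast}$. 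This calls for a localized refinement of the renewal computation used for $\overline{p}_{k;n}$, controlling the first time a probe excursion enters $I_{k;n}$ inside $[an,bn]$; combining this local scaling with the geometric-to-exponential step and the decoupling across $k$ then delivers the full joint convergence asserted in \eqref{eq;4joint0.0}.
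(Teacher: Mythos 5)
Your overall architecture (decoupling across $k$, a single-$k$ analysis, geometric-to-exponential convergence for the $j_{k,i;n}$, and a localized hitting-probability limit to place the intersection sets on $\overline{R_k^\ast}$) matches the paper's, and your last step --- showing that the ratio of the localized to the global hitting probability converges to the normalized $m^\phi$-mass, so that $I_{k,i;n}/n$ collapses to the point $J_{k,i}$ --- is exactly what the paper proves via a last-exit decomposition. The genuine gap is in how you propose to obtain the limit of $(w_n/\vartheta_n)\overline{p}_{k;n}$ and, in particular, the constant $c_\infty$.

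The paper's mechanism is the exact identity $\overline{p}_{k;n}=\mathrm{cap}(I_{k;n})/w_n$ (last-visit decomposition), followed by a proof that $\mathrm{cap}\left(A_0(0,n)\right)/n\to c_\infty$ a.s.\ via Kingman's subadditive ergodic theorem, with $c_\infty=\PP\{A_0\cap\widetilde{A}_0=\{0\}\}$ identified as the probability that two independent random-walk ranges meet only at the origin; Proposition \ref{prop;S_n} then transfers this to $\mathrm{cap}(I_{k;n}\cap\{\lceil nx\rceil,\ldots,\lfloor ny\rfloor\})/\vartheta_n\to c_\infty\left(Z_k^{\ast\leftarrow}(y)-Z_k^{\ast\leftarrow}(x)\right)$. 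Your proposed substitute --- ``$\phi$-Hausdorff gauge estimates together with Theorem B of Doney'' --- supplies only first-moment information: Doney's theorem controls $\PP\{n\in A_0\}$ and hence the \emph{expected number} of points of the probe's range landing in $I_{k;n}$, while the gauge function gives the $m^\phi$-mass of the limit set. Neither controls the probability of at least one hit; the ratio of that probability to the expected hit count is precisely the escape probability, and the constant $c_\infty$ is a genuinely probabilistic quantity (a non-intersection probability of two independent ranges), not a normalization absorbing $c_\beta$, $w_n$ and $\vartheta_n$ as your write-up suggests. Pinning it down requires a capacity or second-moment argument plus an ergodic identification, and without this ingredient your argument cannot produce the constant appearing in the statement of the theorem; the localized limit in your final paragraph silently relies on the same missing capacity convergence.
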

This theorem is proved in Appendix \ref{sec;AppA}, and so is the
following proposition that establishes an 
exponential integrability of $\#I_{1,1;n}$ in both annealed and
quenched situations. 
 
\begin{proposition} \label{prop;1,1card}
\begin{enumerate}[label=(\roman*)]
    \item   Let $c_\infty\in (0,1)$ be the constant in Theorem
      \ref{thm;4joint}. Then 
    \begin{equation} \label{eq;1,1card0.1}
        \PP \left\{ I_{1,1;n} \geqslant m \right \} \leqslant
        (1-c_\infty)^m, \ \ m=1,2,\ldots. 
    \end{equation}
    \item Let $I_{1;n}$ be defined on 
    $(\Omega,\mathscr{F},\PP)$. Then
    for any $\epsilon>0$, there is $\delta=\delta(\epsilon)>0$, $C=C(\epsilon)>0$, and
    an event $\Omega_n \subset \Omega$
    satisfying $ \PP \left\{ \Omega_n \right \} \geqslant  1 -
    \epsilon$ such that 
    \begin{equation} \label{eq;1,1card0.2}
    \sup_{\omega \in \Omega_n}
    \EE_\omega e^{\delta \cdot \# I_{1,1;n}}  \leqslant C ,
    \end{equation}
    where $\EE_\omega$ denotes the conditional expectation $\EE\left\{\cdot \, | \, I_{1,n}(\omega)  \right\}$.
\end{enumerate}
\end{proposition}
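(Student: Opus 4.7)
The plan for both parts is to exploit the strong Markov property of the chain $Y^{(j_{1,1;n};n)}$ in order to dominate $\#I_{1,1;n}$ by a shifted geometric random variable with success probability close to $c_\infty$. Conditionally on $I_{1;n}$, the sequence $\{Y^{(j;n)}\}_{j\geqslant 2}$ is i.i.d.\ $\mu_n$ and independent of $I_{1;n}$, and so on the (a.s.) event $\{j_{1,1;n}<\infty\}$ the chain $Y^{(j_{1,1;n};n)}$ has the same conditional law as $Y^{(2;n)}$ conditioned on $\{I_{2;n}\cap I_{1;n}\neq\emptyset\}$. Hence the conditional distribution of $\#I_{1,1;n}$ given $I_{1;n}$ coincides with that of $\#(I_{1;n}\cap I_{2;n})$ under this conditioning, and it suffices to analyze the latter cardinality.

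Order the elements of $I_{1;n}\cap I_{2;n}$ as $\sigma_1<\sigma_2<\cdots$. At each $\sigma_k$ the chain $Y^{(2;n)}$ sits at state $0$, so the strong Markov property yields that $\{Y^{(2;n)}_{\sigma_k+s}\}_{s\geqslant 0}$ is an independent fresh copy of the chain started at $0$. Consequently
\begin{equation*}
\PP\bigl\{\sigma_{k+1}<\infty \,\big|\, \sigma_1,\ldots,\sigma_k,I_{1;n}\bigr\} = \PP_0\bigl\{Y_t=0 \text{ for some } t\in (I_{1;n}-\sigma_k)\cap[1,n-\sigma_k]\bigr\}.
\end{equation*}
The central technical step is to bound this right hand side uniformly by $1-c_\infty$, by revisiting the construction of $c_\infty$ inside the proof of Theorem \ref{thm;4joint} and extracting a quantitative one-step estimate for the local time at $0$ of the chain started at $0$ against a generic configuration of $I_{1;n}$. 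Once that uniform deterministic bound is in hand, iterating yields $\PP\{\#I_{1,1;n}\geqslant m+1\mid I_{1;n}\}\leqslant (1-c_\infty)^m$, and integrating out gives the annealed bound \eqref{eq;1,1card0.1}.

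For Part (ii), Theorem \ref{thm;4joint} shows that the scaled pair $\bigl(w_n\overline{p}_{1;n}/\vartheta_n,\#I_{1;n}/\vartheta_n\bigr)$ is tight, so taking $\Omega_n$ to be the intersection of the events forcing both coordinates into a suitable compact subset $K\subset (0,\infty)$ yields $\PP\{\Omega_n\}\geqslant 1-\epsilon$ for $K$ large enough. On $\Omega_n$ the iterated strong Markov bound above can be made uniform in $\omega$, with the per-step return probability replaced by some fixed $1-c_\epsilon$ with $c_\epsilon\in(0,c_\infty]$, so the conditional distribution of $\#I_{1,1;n}$ under $\EE_\omega$ is stochastically dominated by a single shifted geometric random variable. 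Its moment generating function at a fixed small $\delta=\delta(\epsilon)>0$ is then bounded by a constant $C=C(\epsilon)$ independent of $\omega\in\Omega_n$ and of $n$, proving \eqref{eq;1,1card0.2}. The hardest part throughout is converting the asymptotic convergence of $w_n\overline{p}_{k;n}/\vartheta_n$ furnished by Theorem \ref{thm;4joint} into the nonasymptotic deterministic upper bound $1-c_\infty$ for the per-step return probability; this appears to require genuinely quantitative renewal and local-limit estimates for the chain hidden in the construction of $c_\infty$ in Appendix \ref{sec;AppA}.
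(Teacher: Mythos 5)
There is a genuine gap, and it sits exactly where you flag the ``central technical step.'' You freeze $I_{1;n}$ and try to bound the per-step continuation probability
$\PP_0\bigl\{Y \text{ hits } (I_{1;n}-\sigma_k)\cap[1,n-\sigma_k]\bigr\}$
uniformly by $1-c_\infty$. No such deterministic bound exists: $c_\infty=\PP\{A_0\cap\widetilde A_0=\{0\}\}$ in \eqref{eq;c_infty} is an \emph{annealed} constant, obtained by averaging over \emph{both} ranges. For a fixed realization of $I_{1;n}$ the probability that a fresh walk launched from a common point revisits $I_{1;n}$ depends on the local configuration of $I_{1;n}$ near $\sigma_k$ and can be arbitrarily close to $1$ (e.g.\ when $I_{1;n}$ is locally very dense there); quantitative renewal estimates cannot rescue a statement that fails pathwise. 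The fix is to stop conditioning on $I_{1;n}$: dominate $\# I_{1,1;n}$ by $\#(A_0\cap\widetilde A_0)$, the number of common points of two independent \emph{unrestricted} ranges, and apply the strong Markov property to \emph{both} walks simultaneously at each successive common point. Since the walks are increasing, the futures of both ranges beyond a common point are independent fresh copies, so the continuation probability at every step is exactly $1-c_\infty$ by definition, and $\#(A_0\cap\widetilde A_0)$ is geometric. This is the paper's one-line argument for part (i).

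Your part (ii) inherits the same defect in amplified form: an event $\Omega_n$ built from tightness of the \emph{global} quantities $w_n\overline p_{1;n}/\vartheta_n$ and $\# I_{1;n}$ does not control the \emph{local} structure of $I_{1;n}$ near the common points, so it cannot deliver a uniform per-step bound $1-c_\epsilon$, hence no quenched geometric domination. The correct route is much softer: part (i) gives $\EE\, e^{\delta\# I_{1,1;n}}\leqslant C_0<\infty$ once $e^\delta(1-c_\infty)<1$; then $\EE_\omega e^{\delta\# I_{1,1;n}}$ is a nonnegative random variable with mean at most $C_0$, and Markov's inequality produces $\Omega_n=\{\EE_\omega e^{\delta\# I_{1,1;n}}\leqslant C_0/\epsilon\}$ with $\PP\{\Omega_n\}\geqslant 1-\epsilon$. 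No quenched stochastic domination is needed, and none of the ``quantitative renewal and local-limit estimates'' you anticipate are required anywhere in the proof.
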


The last proposition of this subsection is adapted from
(A.3) and (A.9) of \cite{chen:samorodnitsky:2020}. 
\begin{proposition} \label{prop;ALLcard}
  For any $p>0$ there is $\mu_p<\infty$ such that
  \begin{equation} \label{e:mu.p}
\sup_{n\geqslant 1} \EE\bigl(  \overline{F}(n)\# I_{1;n} \bigr)^p\leq
\mu_p. 
\end{equation}
Further, for any $C>0$ there is $c>0$ so that for all $n\in \bbn$,
    \begin{equation} \label{eq;ALLcard0.0}
    \PP 
    \left\{
    \# I_{1;n} \geqslant \frac{c\log n}{ \overline{F}(n)}
    \right\} \leqslant n^{-C}.
\end{equation}
\end{proposition}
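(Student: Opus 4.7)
The plan is to estimate all moments of $N_n := \#I_{1;n}$ under the law $\mu_n$ by exploiting the invariance of $\mu$, and to derive both (i) and (ii) from those moment bounds. Writing $N_n(y)=\sum_{t=0}^n\one_{A_t}(y)$, one has $N_n\ge \one_{\cup_k A_k}$, so by \eqref{eq;mu_n},
\[
\EE[N_n^p]=\frac{1}{w_n}\int N_n^p\,d\mu=\frac{1}{w_n}\sum_{t_1,\ldots,t_p=0}^n\mu(A_{t_1}\cap\cdots\cap A_{t_p}).
\]
Grouping the $p!$ orderings together and restricting to $0\le t_1\le\cdots\le t_p\le n$, I would use $\theta$-invariance of $\mu$ to shift $t_1$ down to $0$ (where $\mu(A_0)=\pi_0=1$) and then the Markov property of $\{Y_t\}$ under $\PP_0$ to factorize each term as $u_{t_2-t_1}u_{t_3-t_2}\cdots u_{t_p-t_{p-1}}$, with $u_k:=\PP_0\{Y_k=0\}$. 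A change of variables to the gap sequence $d_i=t_i-t_{i-1}$ then gives
\[
\EE[N_n^p]\le\frac{p!\,(n+1)\,U(n)^{p-1}}{w_n},\qquad U(n):=\sum_{k=0}^n u_k.
\]

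Next I would invoke the classical renewal/Tauberian estimate $U(n)\overline{F}(n)\to c_\beta'>0$, which applies under Assumption \ref{ass;MCmemo} since $\overline{F}$ is regularly varying with index $-\beta\in(-1,0)$, together with $w_n\sim n\overline{F}(n)/(1-\beta)$ from \eqref{eq;wander0.2}. Substituting, there exist constants $C,C_1$ independent of $n$ and $p$ with
\[
\EE[N_n^p]\le C\,p!\,C_1^{p-1}\overline{F}(n)^{-p},
\]
which after multiplying through by $\overline{F}(n)^p$ proves (i) with $\mu_p:=CC_1^{p-1}p!$.

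For (ii), the factorial growth of this moment bound is precisely what converts it into a uniform-in-$n$ exponential moment: for any $\lambda\in(0,1/C_1)$,
\[
\EE\!\left[e^{\lambda\overline{F}(n)N_n}\right]=\sum_{p\ge 0}\frac{\lambda^p\overline{F}(n)^p\EE[N_n^p]}{p!}\le\frac{C}{1-\lambda C_1}=:M.
\]
A Chernoff inequality then gives $\PP\{N_n\ge c\log n/\overline{F}(n)\}\le M\,n^{-\lambda c}$; fixing $\lambda=1/(2C_1)$ and taking $c$ sufficiently large depending on $C$ absorbs the constant $M$ and yields the required tail bound $n^{-C}$.

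The only nontrivial analytic input is the renewal asymptotic $U(n)\asymp 1/\overline{F}(n)$, which is classical under regularly varying tails. The combinatorial factor $p!$ produced by the ordering step is essential: without it one would obtain only polynomial moment control, which would be too weak to deliver a tail decaying as $n^{-C}$ for arbitrary $C$ in part (ii).
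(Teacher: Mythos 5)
Your argument is correct, and it is worth noting that the paper itself offers no proof here: it simply imports the statement from (A.3) and (A.9) of \cite{chen:samorodnitsky:2020}. The route taken there (and the one most natural given the random-walk representation \eqref{eq;Ikn*} developed in Appendix A) is a direct stochastic domination: since every step $\xi_i\geqslant 1$ and $S_0^{(n)}\geqslant 0$, the event $\{\# I_{1;n}\geqslant m+1\}$ forces $\xi_1\leqslant n,\ldots,\xi_m\leqslant n$, so $\# I_{1;n}-1$ is dominated by a geometric variable with success probability $\overline{F}(n)$; this yields all moments of $\overline{F}(n)\#I_{1;n}$ with factorial growth and the tail bound $n^{-C}$ in one line. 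Your route instead computes integer moments through the invariant measure $\mu$, the shift-invariance, and the Markov property, reducing everything to the renewal function $U(n)=\sum_{k\leqslant n}u_k$. That works, and in fact you do not need the Tauberian asymptotic you invoke: only the upper bound $U(n)\leqslant 1/\overline{F}(n)$ is used, and this follows from the same elementary observation ($U(n)=1+\sum_{m\geqslant 1}\PP\{S_m\leqslant n\}\leqslant 1+\sum_{m\geqslant 1}(1-\overline{F}(n))^m=1/\overline{F}(n)$), so your ``only nontrivial analytic input'' is actually free. The trade-off is that your combinatorial expansion is longer than the domination argument but makes transparent exactly where the $p!$ comes from.

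Two small points you should tidy up. First, the multinomial expansion only treats integer $p$; for general $p>0$ conclude via $\EE X^p\leqslant(\EE X^{\lceil p\rceil})^{p/\lceil p\rceil}$. Second, in part (ii) the Chernoff bound gives $Mn^{-\lambda c}$, which beats $n^{-C}$ only for $n$ large; for the finitely many remaining $n\geqslant 2$ use that $\#I_{1;n}\leqslant n+1$ deterministically, so enlarging $c$ makes the event empty there (and for $n=1$ both sides equal $1$). Neither issue affects the validity of the argument.
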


\subsection{Distributions in the Gumbel maximum domain
  of  attraction} \label{subsec:gumbel}
Recall that a  distribution $H$, with an unbounded support on the right, 
is in the Gumbel maximum domain
  of  attraction if and only if there exist
  $x_0 \in \mathbb{R}$ and $c(x)\to c>0$ as $x\to\infty$ such
  that for $x_0< x <  \infty$ 
\begin{equation} \label{eq;VM1}
    \overline{H}(x) = c(x) \exp \left \{
    - \int_{x_0} ^ x \frac{1}{h(u)} du 
    \right \}
\end{equation}
where $h$ (the
so-called auxiliary function) is an absolutely continuous
positive function  on $(x_0,\infty)$ with density $h^\prime$ satisfying
$\lim_{u \to \infty} h^\prime (u) =0$; we refer the reader to
\cite{resnick:1987:EVRVPP} and \cite{goldie:resnick:1988} for more details. 
The function $h$ must satisfy
$h(x)=o(x)$ as $x\to\infty$; if the distribution $H$ is also
subexponential, then its support is unbounded on the right and
$\lim_{u \to \infty} h(u) =\infty$. 

For a distribution
$H$ satisfying \eqref{eq;VM1}, the centering and scaling required  for
convergence in the extremal limit theorem  can be chosen as 
$$
        b_n = \left(  \frac{1}{1-H} \right ) ^ \leftarrow  (n) , \ 
         a_n = h ( b_n )\,.
         $$
We will often use the following fact: if one
replaces the function $c(\cdot)$ in \eqref{eq;VM1} by an asymptotically
equivalent function, and denotes the new normalizing sequences by
$(\tilde a_n)$ and $(\tilde b_n)$, then
\begin{equation} \label{e:small.diff}
  \lim_{n\to\infty} \frac{b_n-\tilde b_n}{a_n}=0, \ 
  \lim_{n \to \infty} \frac{ \tilde a_n } {a_n } = 1 \, .
  \end{equation}

\subsection{Random Sup-Measures}   \label{sec;RSM}
We will deal with sup-measures taking values in
$\overline{\mathbb{R}}=[-\infty, \infty]$. The
main reference is \cite{obrien:torfs:vervaat:1990}. 

A  sup-measure is a mapping $m:\mathcal{G} \to \overline{\mathbb{R}}$ such that $m(\emptyset) =-\infty$ and 
$ m (  \cup_\alpha G_\alpha ) = \vee_\alpha m (G_\alpha) $ for an
arbitrary collection $( G_\alpha )$ of open sets. 
The sup-derivative $d^\vee m$ of $m$ is 
$$
        d^ \vee m (t) = \bigwedge_{t \in G} m (G) ;
$$
it is automatically an upper semicontinuous $\overline{\mathbb{R}}$-valued 
function of $t$. Given any $\overline{\mathbb{R}}$-valued 
function $f$,  the sup-integral of $f$ 
$$
        i^ \vee f (G) = \bigvee_{t \in G } f (t), \quad  G \in
        \mathcal{G} 
$$
is a sup-measure.  The 
 domain of a sup-measure can be extended to all Borel sets via  
$$
        m(B) = \bigvee_{t \in B}  d^\vee (t), \quad B \ \ \text{Borel.}
$$
The collection $\text{SM}$ of sup-measures admits a natural metrizable 
sup-vague topology with its corresponding Borel measurability, which 
allows one to talk about random sup-measures. In particular, if
$\{\mathcal{M}_n\}_{n \geqslant 1}$ and $\mathcal{M}$ are random sup-measures, then 
$\mathcal{M}_n \Rightarrow \mathcal{M}$ if and only if 
\begin{equation} \label{eq;rsmWC1}
    (\mathcal{M}_n (I_1),\ldots,\mathcal{M}_n (I_m) ) \Rightarrow 
    (\mathcal{M} (I_1),\ldots,\mathcal{M} (I_m) )
\end{equation}
for arbitrarily disjoint open intervals $I_1,\ldots,I_m$ such that $\PP\{ \mathcal{M}(I_i) = \mathcal{M}(\overline{I_i}) \} =1$ for all $i=1,\ldots,m$.

A stochastic process $\{X_t\}_{t\in \bbn}$  induces a family of
random sup-measures $\{ \mathcal{M}_n(\cdot) \}_{n\geqslant 1}$ via 
\begin{equation} \label{eq;RSM0.0}
    \mathcal{M}_n(B) : = \max_{t \in n B} X_t, \quad B \in \mathcal{B}(E),
\end{equation} 
We now describe the limiting random sup-measures appearing in our main results. The construction is doubly stochastic.

First, let $\alpha\in (0,1)$ and $\beta\in (0,1/2)$, and denote by $P_\beta$ the law
of the closed range 
$R$ of the $\beta$-stable subordinator in (\ref{eq;betaSRS}).
Consider a Poisson point process $\mathcal{N}$ on $\bbr \times \bbr_+  \times
\mathcal{F}(\bbr_+)$ with mean measure  
$$
    e^{-x} dx \times (1-\beta) y^{-\beta} d y  \times  d P_\beta, 
$$
defined on some probability space $(\Omega_1,\mathscr{F}_1,\PP_1)$, and 
let $(X_k, Y_k, R_k)_{k\in \bbn}$ be a measurable enumeration of its
points. 
For each point $(X_k, Y_k, R_k)=(X_k(\omega_1), Y_k(\omega_1),
R_k(\omega_1))$ the dimension function $\phi$ in \eqref{e:phi} produces
a (random) measure 
\begin{equation} \label{eq;2ndMMea0.2}
    H_k (t,\omega_1) =  m^\phi \left\{ \big( Y_k(\omega_1) +
      R_k(\omega_1) \big) \cap [0,t] \right\},      \quad t \in \bbr_+
\end{equation}
on $\bbr_+$.  The second level of randomness is now
  introduced, conditionally on this point $(X_k(\omega_1),
  Y_k(\omega_1), R_k(\omega_1) )$,  via a Poisson point process
$\mathbb{C}_k=\mathbb{C}_k(\omega_1)$ on $\bbr \times 
\bbr_+$ with the mean measure 
$$
\frac{C_{\alpha,\beta}}{c_\beta} \exp \left\{  - C_{\alpha,\beta}
  \big(  \lambda - X_k (\omega_1)   \big) \right \}  d\lambda  \times  d
H_k(\cdot,\omega_1) ,
$$
 where
\begin{align}
   & C_{\alpha,\beta}: = \left( \frac{1-\beta}{\beta} \right)^{\frac{1}{\alpha} - 1}
     >1 ,  \label{eq;2ndMMea0.1}
\end{align}
and $c_\beta$ is the constant in (\ref{eq;HdfMea}). We assume that the
point processes $(\mathbb{C}_k)$ live on some other probability space 
$(\Omega_2,\mathscr{F}_2,\PP_2)$ and, conditionally on
  $\mathcal N$, are independent of each other. 
 The overall probability space
$(\Omega,\mathscr{F},\PP)$  is
the product space $(\Omega_1\times
\Omega_2,\mathscr{F}_1\times\mathscr{F}_2,\PP_1\times\PP_2)$. 
 
For $k\in\bbn$, let  
$\{ \Lambda_{k,i}, W_{k,i}\}_{i\in\bbn}$ be a measurable
enumeration of the points of  $\mathbb{C}_k$. We note that the
points $\{W_{k,i}\}_{i\in\bbn}$  belong to  the set $Y_k+R_k$ with
probability 1. Consider a random
sup-measure $\mathcal{M}$ on $\bbr_+$ 
given by 
\begin{equation} \label{eq;limRSM0.0}
  \mathcal{M}(B) =   \sup_{\substack{k, i\in \bbn }} \left\{ \Lambda_{k,i} : W_{k,i} \in B \right \}
  , \quad B \in \mathcal{\bbr_+}.
\end{equation}
This sup-measure has certain invariance properties. First of all, it
is clear that $\mathcal{M}$ can be represented in the form
$\mathcal{M}=\varphi\bigl(\mathcal{N}(\omega_1), \omega_2\bigr)$ for
some measurable function $\varphi$, and   the
mean measure of  $\mathcal{N}$ is invariant under positive shifts of
the closed sets. Hence the stationarity of the  $\mathcal{M}$:
$$
\mathcal{M}(r+\cdot) \eid \mathcal{M}(\cdot) \ \ \text{for any} \ \
r\geqslant 0;
$$
cf. Proposition 4.3 in \cite{lacaux:samorodnitsky:2016}. Next,
$\mathcal{M}$ has a self-affiness property: for $a>0$, 
$$
\mathcal{M}(a\cdot) \eid \mathcal{M}(\cdot) + \left(1-\beta +\beta/C_{\alpha,\beta}
\right) \log a, 
$$
which will be established in Remark \ref{rk:prop.M} below. 
  
The random sup-measure $\mathcal{M}$   naturally induces a stochastic
process of independent interest by
\begin{equation} \label{eq;EP0.0}
\mathbb{M}(t) = \mathcal{M}([0,t]), \quad t\in(0,\infty).
\end{equation}
This process is clearly nondecreasing, continuous in probability, and 
the sample paths are in $D(0,\infty) = \cap_{\epsilon >0}
D[\epsilon,\infty)$. The
finite-dimensional distributions can be read off the
following proposition, which implies that $\mathbb{M}(t)
\to -\infty$ as $t\downarrow 0$.

\begin{proposition}  \label{prop;marginEP}
Let $0=t_0<t_1<\cdots<t_k<\infty$. Then for any $x_i\in\bbr, i=1,\ldots,
k$,
\begin{align} \label{e:joint.incr}
&\PP\left( \mathbb{M}\bigl( (t_{i-1},t_i]\bigr)\leqslant x_i, \, i=1,\ldots,
  k\right) =\exp\biggl\{ - \Gamma\bigl( 1-1/C_{\alpha,\beta}\bigr)  \\
 &\left.  \int_0^{\infty} (1-\beta)y^{-\beta}
 \EE\left[ \sum_{i=1}^k e^{-C_{\alpha,\beta} x_i}  \bigl[ Z^\leftarrow \bigl((t_i-y)\bigr)_+- Z^\leftarrow\bigl(
  (t_{i-1}-y)_+\bigr)\bigr]\right]^{1/C_{\alpha,\beta}} dy\right\}. \notag 
\end{align}

In particular, for any $t>0$ and $x\in\bbr$  we have
\begin{equation} \label{eq;marginEP0.0}
\PP\left\{ \mathbb{M}(t) \leqslant x  \right\} = 
\exp\left\{ 
-  K(\alpha,\beta) t^{1-\beta+\beta C_{\alpha,\beta}^{-1}}  
e^{-x}\right\},
\end{equation}
where
$$
K(\alpha,\beta)= (1-\beta) \Gamma\bigl( 1-1/C_{\alpha,\beta}\bigr) 
B\bigl( 1-\beta, 1+\beta/C_{\alpha,\beta}\bigr) \EE\bigl(
Z^\leftarrow(1)\bigr)^{1/C_{\alpha,\beta}}.
$$
Here $\Gamma(\cdot)$ and $B(\cdot,\cdot)$ are, respectively, the Gamma
function and the Beta function.
\end{proposition}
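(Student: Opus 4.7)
The plan is a double conditioning argument followed by an explicit evaluation of the resulting Laplace functional of $\mathcal{N}$.

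\textbf{Step 1: conditioning on the inner Poissons.} Fix disjoint intervals $(t_{i-1},t_i]$, $i=1,\ldots,k$. Given $\mathcal{N}$ (equivalently, given $\omega_1$), the sup-measure $\mathcal{M}$ is built from the independent Poisson processes $\mathbb{C}_k$. The event
$\bigcap_i\{\mathbb{M}\bigl((t_{i-1},t_i]\bigr)\leqslant x_i\}$ is exactly the event that no $\mathbb{C}_k$ has a point in the ``forbidden'' region
$B=\bigsqcup_{i=1}^k\{(\lambda,w):w\in(t_{i-1},t_i],\ \lambda>x_i\}$.
By the void probabilities of a Poisson process,
\[
\PP\bigl(\mathbb{M}\bigl((t_{i-1},t_i]\bigr)\leqslant x_i\ \forall i\,\bigm|\,\mathcal{N}\bigr)=\prod_{k\in\bbn}e^{-\mu_k},
\]
where $\mu_k=\mathbb{C}_k$-mean-measure of $B$. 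Integrating the $\lambda$-variable against $\tfrac{C_{\alpha,\beta}}{c_\beta}\exp\{-C_{\alpha,\beta}(\lambda-X_k)\}d\lambda$ and using the translation-invariance of $m^\phi$ together with (\ref{eq;HdfMea}) applied to $R_k$ gives
\[
\mu_k=e^{C_{\alpha,\beta}X_k}\sum_{i=1}^{k}e^{-C_{\alpha,\beta}x_i}\bigl[Z_k^{\leftarrow}\bigl((t_i-Y_k)_+\bigr)-Z_k^{\leftarrow}\bigl((t_{i-1}-Y_k)_+\bigr)\bigr].
\]

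\textbf{Step 2: Laplace functional of $\mathcal{N}$.} Write $\mu_k=e^{C_{\alpha,\beta}X_k}\,S(Y_k,Z_k^{\leftarrow})$ with the obvious notation. By the Laplace functional of the Poisson process $\mathcal N$ on $\bbr\times\bbr_+\times \mathcal F(\bbr_+)$ with mean measure $e^{-x}dx\times(1-\beta)y^{-\beta}dy\times dP_\beta$,
\[
\EE\prod_k e^{-\mu_k}=\exp\left\{-\int_\bbr\!\!\int_0^\infty\!\!\EE_{P_\beta}\!\bigl[1-e^{-e^{C_{\alpha,\beta}x}S(y,Z^{\leftarrow})}\bigr]e^{-x}(1-\beta)y^{-\beta}\,dy\,dx\right\}.
\]
The inner $x$-integral, for each fixed $y$ and each realization of $Z^{\leftarrow}$, is evaluated by the substitution $s=e^{C_{\alpha,\beta}x}S(y,Z^{\leftarrow})$, which reduces it to the standard Gamma-type identity
$\int_0^\infty(1-e^{-s})s^{-\gamma-1}ds=\Gamma(1-\gamma)/\gamma$ with $\gamma=1/C_{\alpha,\beta}\in(0,1)$; this yields $S(y,Z^\leftarrow)^{1/C_{\alpha,\beta}}\Gamma(1-1/C_{\alpha,\beta})$. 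Pushing the remaining $(y,Z^\leftarrow)$-expectation outside gives exactly formula (\ref{e:joint.incr}).

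\textbf{Step 3: the marginal.} Specializing to $k=1$, $t_1=t$, $x_1=x$ and using the $\beta$-self-similarity of $Z^{\leftarrow}$, one gets
\[
\EE\bigl[Z^{\leftarrow}\bigl((t-y)_+\bigr)^{1/C_{\alpha,\beta}}\bigr]=(t-y)_+^{\beta/C_{\alpha,\beta}}\EE\bigl[Z^{\leftarrow}(1)^{1/C_{\alpha,\beta}}\bigr].
\]
The remaining integral $\int_0^t y^{-\beta}(t-y)^{\beta/C_{\alpha,\beta}}dy=t^{1-\beta+\beta/C_{\alpha,\beta}}B(1-\beta,1+\beta/C_{\alpha,\beta})$ pins down the constant $K(\alpha,\beta)$ and proves (\ref{eq;marginEP0.0}); taking $x\to-\infty$ in the marginal also shows $\mathbb{M}(t)\to-\infty$ as $t\downarrow 0$.

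The main delicacies are of a bookkeeping nature: justifying Fubini for the (countable, a.s. finite) product over $k$ and the interchange with $\EE_{P_\beta}$, and handling the edge case $S(y,Z^\leftarrow)=0$ (for which the $x$-integrand vanishes and no issue arises). The integrability $\EE[Z^{\leftarrow}(1)^{1/C_{\alpha,\beta}}]<\infty$ is automatic since $1/C_{\alpha,\beta}<1$ and Mittag--Leffler variables have all moments.
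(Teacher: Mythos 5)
Your proof is correct and follows essentially the same route as the paper: the paper phrases the first two steps as a void probability of the marked Poisson process $\mathcal N$ (with the $\mathbb{C}_k$ as marks) and then integrates out the $e^{-z}dz$ variable via the same Gamma identity, which is precisely your conditioning-plus-Laplace-functional computation in a slightly different order. The marginal computation via self-similarity of $Z^\leftarrow$ and the Beta integral is also identical to the paper's.
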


\begin{remark} \label{rk:prop.M}
{\rm 
Leting $t\downarrow 0$ in \eqref{eq;marginEP0.0} shows that $\mathbb{M}(t)
\to -\infty$ as $t\downarrow 0$. Next, let $a>0$. By
\eqref{e:joint.incr} and  the self-similarity of $Z^\leftarrow$,
\begin{align*}
&\PP\left( \mathbb{M}\bigl( (at_{i-1},at_i]\bigr)\leqslant x_i, \, i=1,\ldots,
  k\right) \\ 
  =&\exp\biggl\{ - a^{\beta/C_{\alpha,\beta}}\Gamma\bigl( 1-1/C_{\alpha,\beta}\bigr)  
     \int_0^{\infty} (1-\beta)y^{-\beta} \\
   &\hskip 0.35in \EE_1 \left[  \sum_{i=1}^k e^{-C_{\alpha,\beta} x_i} \bigl[Z^\leftarrow \bigl((t_i-y/a)\bigr)_+- Z^\leftarrow\bigl(
   (t_{i-1}-y/a)_+\bigr)\bigr]\right]^{1/C_{\alpha,\beta}} dy  \biggr \}  \\
 =&\exp\biggl\{ - a^{1-\beta+\beta/C_{\alpha,\beta}}\Gamma\bigl( 1-1/C_{\alpha,\beta}\bigr)  
    \int_0^{\infty} (1-\beta)y^{-\beta} \\
  &\hskip 0.35in \EE_1 \left[  \sum_{i=1}^k e^{-C_{\alpha,\beta} x_i} \bigl[Z^\leftarrow \bigl((t_i-y)\bigr)_+- Z^\leftarrow\bigl(
   (t_{i-1}-y)_+\bigr)\bigr]\right]^{1/C_{\alpha,\beta}} dy \biggr\} \\
 =&\PP\left( \mathbb{M}\bigl( (t_{i-1},t_i]\bigr)+ \left(1-\beta +\beta/C_{\alpha,\beta}
\right) \log a\leqslant x_i, \, i=1,\ldots,  k\right). 
\end{align*}
Hence the self-affiness of $ \mathbb{M}$.
}
\end{remark}

\begin{proof}[Proof of Proposition \ref{prop;marginEP}]
We view the collection $\{ \Lambda_{k,i}, W_{k,i}\}_{k,i\in\bbn}$ is a
function of the points of the Poisson process $\mathcal{N}$  and their
$i.i.d.$ marks defined on $(\Omega_2,\mathscr{F}_2,\PP_2)$. Since a
marked Poisson process is still Poisson, and the event described in
the left hand side of \eqref{e:joint.incr} is the event that the
marked Poisson process has no points in a part of its domain, we see
that
\begin{align*}  
&\PP\left( \mathbb{M}\bigl( (t_{i-1},t_i]\bigr)\leqslant x_i, \, i=1,\ldots,
                   k\right) \\
  =&\exp\biggl\{ -  \int_{-\infty}^\infty e^{-z}dz \int_0^{\infty}
                   (1-\beta)y^{-\beta}dy \\  
& \hskip 0.35in \EE_1\bigl[ \PP_2\bigl\{\mathbb{C}_{z,y}\bigl(\cup_{i=1}^k
                    \bigl( x_i,\infty)\times
     (t_{i-1},t_i]\bigr)>0\bigr\}\bigr]\biggr\}, 
\end{align*}
where, given a defined on $(\Omega_1,\mathscr{F}_1,\PP_1)$
$\beta$-stable regenerative set $R$, the process $\mathbb{C}_{z,y}$ is
a defined on $(\Omega_2,\mathscr{F}_2,\PP_2)$ Poisson process on
$\bbr\times\bbr_+$ with mean measure 
$$
\frac{C_{\alpha,\beta}}{c_\beta} \exp \left\{  - C_{\alpha,\beta}
  \big(  \lambda - z   \big) \right \}  d\lambda  \times
d m^\phi \left\{ \big( y +
  R  \big) \cap [0,\cdot] \right\}.
$$
Therefore,
\begin{align*}
& \PP_2\bigl\{\mathbb{C}_{z,y}\bigl(\cup_{i=1}^k
                    \bigl( x_i,\infty)\times
     (t_{i-1},t_i]\bigr)>0\bigr\}\\
=&1- \PP_2\bigl\{\mathbb{C}_{z,y}\bigl(\cup_{i=1}^k
                    \bigl( x_i,\infty)\times
     (t_{i-1},t_i]\bigr)=0\bigr\}
\\
  =&1-\exp\left\{ -c_\beta^{-1}\sum_{i=1}^k e^{-C_{\alpha,\beta}(x_i-z)}
 m^\phi \Bigl\{ \big( y +
  R  \big) \cap (t_{i-1},t_i] \Bigr\}\right\},
\end{align*}
and we conclude by \eqref{eq;HdfMea} that
\begin{align*}
&\EE_1\bigl[  \PP_2\bigl\{\mathbb{C}_{z,y}\bigl(\cup_{i=1}^k
                    \bigl( x_i,\infty)\times
     (t_{i-1},t_i]\bigr)>0\bigr\} \\
  =&\EE_1\left( 1-\exp\left\{
     -\sum_{i=1}^ke^{-C_{\alpha,\beta}(x_i-z)}\bigl[Z^\leftarrow\bigl((t_i-y)\bigr)_+-
     Z^\leftarrow\bigl( 
   (t_{i-1}-y)_+\bigr)\bigr]\right\}\right),
\end{align*}
and \eqref{e:joint.incr} follows by simple integration. Finally, using
\eqref{e:joint.incr} with $k=1$ and $t_1=t,x_1=x$ we obtain 
\begin{align*}
\PP\left\{ \mathbb{M}(t) \leqslant x  \right\} = 
\exp\left\{ - \Gamma\bigl( 1-1/C_{\alpha,\beta}\bigr)
  e^{-x}\int_0^{t} (1-\beta)y^{-\beta} \EE  Z^\leftarrow (t-y) ^{1/C_{\alpha,\beta}} dy\right\},
\end{align*}
and \eqref{eq;marginEP0.0} follows by the self-similarity of
$Z^\leftarrow$ and simple integration. 
\end{proof}

We can obtain an explicit representation of  the restriction of the
sup-measure $\mathcal{M}$ to $[0,1]$.

First, the
restriction of the Poisson point process $(X_k, Y_k+ R_k)_{k\in \bbn}$
to $\bbr \times  \mathcal{K}^\prime([0,1])$ (we only need to look at
nonempty compact sets) can be represented as a Poisson
point process $\mathcal{N}_0$ on $\bbr$ with the mean measure $e^{-x}
dx$, marked by $i.i.d.$ copies of the random closed
set $\overline{R^\ast}$ in \eqref{eq;barSRS}. The markings are independent of
$\mathcal{N}_0$.  
The $k^\text{th}$ copy
$\overline{R_k^\ast}$  is associated  via \eqref{eq;HdfMea*} 
with a shifted stable
subordinator $(Z^*_k)$ satisfying for $t\in [0,1]$,
$$
m^\phi ( \overline{R_k^\ast} \cap [0,t] ) 
    = c_\beta Z_k^ {\ast \leftarrow }(t).
$$
Furthermore, the process 
$\mathcal{N}_0$ itself can be represented by the points
$-\log\Gamma_k, \, k=1,2,\ldots$, where $(\Gamma_k)$ form a standard
unit rate Poisson process on $\bbr_+$. 

Second, for a fixed $k$ the mean
measure of the Poisson point process
$\mathbb{C}_k$ can be rewritten in the form
\begin{align*}
&\frac{C_{\alpha,\beta}}{c_\beta} m^\phi ( \overline{R_k^\ast} \cap
  [0,1] )
  \exp \left\{  - C_{\alpha,\beta}
  \big(  \lambda - X_k (\omega_1)   \big) \right \}   d\lambda
                 \times  d\eta_{k}(\cdot,\omega_1)  \\
 =&C_{\alpha,\beta} Z_k^ {\ast \leftarrow }(1)  \exp \left\{  - C_{\alpha,\beta}
  \big(  \lambda - X_k (\omega_1)   \big) \right \}   d\lambda
                 \times  d\eta_{k}(\cdot,\omega_1),
\end{align*}
where $\eta_{k}(\cdot,\omega_1)$ is the ($\omega_1$-dependent)
probability measure \eqref{eq;eta_omega} associated with $(Z^*_k)$.
Therefore, we can choose a measurable
enumeration of the points of  $\mathbb{C}_k$ by first selecting a
measurable enumeration $\{ \Lambda_{k,i}\}_{i\in\bbn}$ of the points
of the Poisson point process on $\bbr$ with the mean measure
$$
C_{\alpha,\beta} Z_k^ {\ast \leftarrow }(1)  \exp \left\{  - C_{\alpha,\beta}
  \big(  \lambda - X_k (\omega_1)   \big) \right \}   d\lambda
$$
and then attaching to these points independent $i.i.d.$ marks with the
common law $\eta_{k}(\cdot,\omega_1)$. Since the former Poisson
process is, once again, easily generated as a transformation of a unit
rate Poisson process on $\bbr_+$ (say, $(\Gamma_{k,i})$), we conclude
that we can choose a measurable 
enumeration of the points of  $\mathbb{C}_k$ so that, in law, 
\begin{equation} \label{eq;limRSM[0,1]}
  ( \Lambda_{k,i}, W_{k,i}   ) _ {k,i \in \bbn} 
  =
  \left(
   - \log \Gamma_k + \frac{1}{C_{\alpha,\beta} } \Big( -
  \log \Gamma_{k,i} + \log  Z_k^{\ast\leftarrow}(1) \Big),  J_{k,i}
  \right)_{k,i\in \bbn},
\end{equation}
where $\{ \Gamma_k \}_{k\in \bbn }$ is  a unit rate Poisson process on
$\bbr$ independent of the rest random elements that are defined  in
Theorem  \ref{thm;4joint}.

\section{Extremal Limit Theorems for Stationary Semi-Exponential Processes}  \label{sec;SemiExpPro}

We focus on stationary infinitely divisible processes of form
\begin{equation} \label{eq;SEP0.0}
    X_t = \int_E 1_{A_0} \circ \theta ^ t (x) M (dx), \quad t\in \mathbb{Z},
\end{equation}
where $(E, \mathcal{E}, \mu,\theta)$ is the dynamical system described
in \eqref{eq;mu}-(\ref{eq;leftshift}), $A_0=\{y\in E:\, y_0=0\}$, and
$M$ is an infinitely divisible random measure on $(E,\mathcal{E})$
with control measure $\mu$ and with constant local characteristic
triple $(\sigma^2, \nu, b)$; see \cite{samorodnitsky:2016:SPLRD}. We note
that the choice of the indicator function in \eqref{eq;SEP0.0} is
mainly for convenience, and more general integrands could be
considered.

 The processes we consider have the marginal tails that are both
subexponential and in the Gumbel maximum domain of attraction. They
will also have a  certain semi-exponential decay, 
Correspondingly, we choose the 
auxiliary function $h$ in \eqref{eq;VM1} to be of a specific type. The
assumptions are imposed through the local
L\'evy measure $\nu$ of the infinitely divisible random measure.

\begin{assumption} \label{ass;locLevyMea}
For some $\alpha \in (0,1)$ and
$\gamma>0$,
\begin{align}
\overline{\nu}(x):=\nu\bigl( (x,\infty)\bigr)\sim \gamma\overline{H}(x)
  :=\gamma \exp \left\{ - \int_1^ x  
 \frac{du}{u^{1-\alpha} L_\alpha(u)} \right \}, \ x\to\infty \label{eq;LevyMea0.0}
\end{align}
for a differentiable slowly varying function 
 $L_\alpha$  such that  
\begin{align}
& L^\prime_\alpha (x) = o \left(  L_\alpha(x)/x \right). 
 \label{eq;LevyMea0.1}    
\end{align}
\end{assumption}

For large values of the argument both $\overline{\nu}$ and
$\overline{H}$ can be viewed as distributional tails. 
Automatically, these distributions are in the Gumbel maximum domain
  of  attraction. Additionally, by Theorem 2 in \cite{pitman:1980},
  these distributions are also subexponential. We conclude that 
  the distribution of $X_0 \in D(\Lambda) \cap \mathcal{S}$ since $\PP\{ X_0 > x \} \sim \overline{\nu}(x)$, see
  Theorem 1 in \cite{embrechts:goldie:veraverbeke:1979}.

To state the main extremal limit theorems we define two functions by 
\begin{align} 
    & V(x) = \left( 1 \big/ \overline{\nu}  \right) ^\leftarrow (x), \quad
     h(x)  = x ^{1-\alpha} L_\alpha (x), \quad x \geqslant 1.
    \label{eq;aux-h} 
\end{align}
Some properties of these and other important functions are described
in Proposition \ref{prop;VanGogh} in the Appendix B.

The  normalizing constants in the extremal limit theorems are given by 
\begin{equation}  \label{eq;normalising0.0}
     b_n  = V(w_n) + V ( c_\infty \vartheta_n ) , \quad
     a_n  = h \circ V(w_n) , 
\end{equation}
    where   $\{w_n\}_{n\geqslant 1}$ is the wandering rate sequence in
    (\ref{eq;wander0.1}), $\{\vartheta_n\}_{n\geqslant 1}$ is the
    sequence in (\ref{eq;vartheta_n}), and $c_\infty$ is the constant
    in Theorem \ref{thm;4joint}, given explicitly in
    \eqref{eq;c_infty}.

The first extremal limit theorem establishes convergence in the space
of random sup-measures. 
\begin{theorem}  \label{thm;RSMcv}
Let $\{X_t \}_{t\in \bbz}$ be the stationary infinitely divisible
process defined by (\ref{eq;SEP0.0}), such that \eqref{eq;LevyMea0.0}
and \eqref{eq;LevyMea0.1} hold. Assume also that the dynamical system
$(E, \mathcal{E}, \mu,\theta)$  satisfies the 
Assumptions \ref{ass;MCmemo}. Then the random sup-measures
defined  by (\ref{eq;RSM0.0}) satisfy  
\begin{equation} \label{eq;RSMcv0.0}
    \frac{\mathcal{M}_n(\cdot) - b_n}{a_n}
    \Rightarrow \mathcal{M}(\cdot) \quad \text{in } \text{SM}([0,\infty)),
\end{equation}
where $\mathcal{M}$ is given in (\ref{eq;limRSM0.0}).
\end{theorem}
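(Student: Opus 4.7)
My plan is to prove \eqref{eq;RSMcv0.0} at the level of the underlying Poisson representation, matching a two-scale Gumbel decomposition of the representation points against the outer/inner Poisson structure $(\Gamma_k, \Gamma_{k,i})$ in \eqref{eq;limRSM[0,1]}. By the characterization \eqref{eq;rsmWC1} it suffices to prove joint weak convergence of $\bigl((\mathcal{M}_n(I_j)-b_n)/a_n\bigr)_{j=1}^m$ for arbitrary disjoint open intervals $I_1,\ldots,I_m\subset[0,\infty)$ (all endpoints are automatically continuity points of $\mathcal{M}$, since its marked Poisson structure has no atoms at deterministic locations). Using the series representation, $X_t=\sum_j v_j\, 1_{A_0}(\theta^t x_j)+\text{compensator}$, where $\{(x_j,v_j)\}$ is a Poisson point process on $E\times\bbr$ with intensity $\mu\otimes\nu$; the compensator and the negative jumps are negligible on the scale $a_n$ by standard subexponential truncation arguments, and only the points with $x_j\in\bigcup_{k=0}^{n}A_k$ (of $\mu$-mass $w_n$) can contribute to $\max_{t\in nB}X_t$.

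The core step is a two-level decomposition. I call a point $(x_j,v_j)$ \emph{huge} if $v_j>V(w_n)-K a_n$, and \emph{medium} otherwise; by the Gumbel MDA property \eqref{eq;VM1} together with \eqref{e:small.diff}, the normalized huge values $(v_{(k)}-V(w_n))/a_n$ converge to a Poisson process on $[-K,\infty)$ with intensity $e^{-y}dy$, producing the outer indices $\{-\log\Gamma_k\}$ after $K\to\infty$. For each huge index $k$, Theorem~\ref{thm;4joint} gives, jointly across clusters, the rescaled zero set $I_{(k);n}/n\Rightarrow\overline{R_k^\ast}$, the intersection frequency $(w_n/\vartheta_n)\overline{p}_{(k);n}\Rightarrow c_\infty Z_k^{\ast\leftarrow}(1)$, and the rescaled intersection-set positions $I_{(k),i;n}/n\Rightarrow J_{k,i}\in\overline{R_k^\ast}$.

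The multi-extremum structure inside each huge cluster comes from medium points whose paths intersect $I_{(k);n}$. Conditional on $I_{(k);n}$ and on the intersection count $N_{k,n}\sim c_\infty Z_k^{\ast\leftarrow}(1)\vartheta_n$, the values $\{v_{j_{(k),i;n}}\}_i$ are i.i.d.\ draws from $\nu$, and by the classical Poisson approximation to high-order statistics their top values (rescaled by $V(\cdot)$ and $h\circ V(\cdot)$) form an independent unit-rate Poisson process $\{\Gamma_{k,i}\}$ with $v_{j_{(k),i;n}}\approx V(N_{k,n}/\Gamma_{k,i})$. Expanding
\[
 \frac{V\bigl(N_{k,n}/\Gamma_{k,i}\bigr)-V(c_\infty\vartheta_n)}{a_n} \;\to\; \frac{1}{C_{\alpha,\beta}}\bigl(-\log\Gamma_{k,i}+\log Z_k^{\ast\leftarrow}(1)\bigr)
\]
via the regular-variation properties of $V,h$ collected in Proposition~\ref{prop;VanGogh} and the identity $h\circ V(\vartheta_n)/a_n\to 1/C_{\alpha,\beta}$ (essentially the defining relation \eqref{eq;2ndMMea0.1}) produces the formula for $\Lambda_{k,i}$. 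Adding the huge contribution $-\log\Gamma_k$ shows that the point process $\sum_{k,i}\delta_{(t^\ast_{(k),i}/n,\,(X_{t^\ast_{(k),i}}-b_n)/a_n)}$ converges to $\sum_{k,i}\delta_{(W_{k,i},\Lambda_{k,i})}$, from which \eqref{eq;RSMcv0.0} follows by the continuous-mapping theorem for the sup-functional on an open set.

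The hardest step will be the negligibility of all other contributions to $\mathcal{M}_n(B)$. Three sources must be excluded: (i) pairs of \emph{medium} points whose paths intersect, whose sum is bounded above by $2V(\vartheta_n)$, strictly below $b_n$ because $(1-\beta)^{1/\alpha}>\beta^{1/\alpha}$ when $\beta<1/2$, but requiring uniform Poissonized moment control; (ii) triple and higher-order intersections, where the probability of an $r$-fold coincidence on any $I_{(k);n}$ is controlled by $r$-th moments of $\overline F(n)\#I_{1;n}$ via \eqref{e:mu.p}; and (iii) the tail in $K$, handled by $\int_K^\infty e^{-y}dy\to 0$. The decisive technical input for (i)--(ii) is the quenched exponential integrability of $\#I_{1,1;n}$ from Proposition~\ref{prop;1,1card}(ii), combined with the tightness bounds of Proposition~\ref{prop;ALLcard}; together they show that inside each huge cluster the total number of intersection points is $O_\mathbb{P}(\vartheta_n)$ and that multi-coincidence events have subexponentially small probability, so the only surviving contributions in the limit are the first-order pairings (huge) $\times$ (medium) described above.
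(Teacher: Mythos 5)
Your overall architecture is the same as the paper's: a two-scale decomposition of the Poissonian jumps, with Theorem \ref{thm;4joint} supplying the joint limit of the cluster sets, the intersection probabilities and the intersection positions, and the expansion of $V$ via Proposition \ref{prop;VanGogh} producing exactly the formula $\Lambda_{k,i}=-\log\Gamma_k+C_{\alpha,\beta}^{-1}(-\log\Gamma_{k,i}+\log Z_k^{\ast\leftarrow}(1))$ of \eqref{eq;limRSM[0,1]}. The identification of the limit and the treatment of the top-order terms (the paper's Proposition \ref{prop;top}) are essentially correct.

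The genuine gap is in your negligibility step, and it is not a technicality — it is where most of the work in the paper lives (Propositions \ref{prop;middle} and \ref{prop;bottom}, Lemmas \ref{lem;psi} and \ref{lem;CvxAls}). First, your item (i) is wrong as stated: a ``medium'' point in your definition is any point with $v_j\leqslant V(w_n)-Ka_n$, so a pair of intersecting medium points can have sum arbitrarily close to $2V(w_n)\gg b_n$; the bound $2V(\vartheta_n)$ only applies to points with $\Gamma_j\gtrsim w_n/\vartheta_n$, i.e., deep in the tail. Second, and more importantly, the dangerous regime is not pairs but configurations in which \emph{several} moderately large points (those with $\Gamma_k>n^{\rho}$, each worth roughly $V(w_n/n^{r})\sim((1-\beta-r)\log n)^{1/\alpha}\mathscr{L}(\log n)$) all pass through the same time point $t$. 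Excluding these requires a union bound over the $n$ possible positions of $t$ and over roughly $n^{\rho}$ clusters, so one needs the probability of an $m$-fold coincidence summing above $z_n$ to decay faster than $n^{-(\rho+\beta)}$; this is precisely the content of the inequality $\widetilde\psi(r)>r+\beta$ in Lemma \ref{lem;psi}(ii), of the subexponential convolution estimate \eqref{eq;CvxAls0.0}, and of the inductive bootstrap over the exponents $\rho_i$ in \eqref{eq;revised;n-btm4.3.2}, which is needed because a single application of the bound does not cover the whole range $(\rho,1-\beta)$ in one step. The tools you invoke — the moments \eqref{e:mu.p} and the quenched exponential integrability of $\#I_{1,1;n}$ from Proposition \ref{prop;1,1card}(ii) — control the cardinalities of the intersection sets, not the probability that the \emph{values} carried by multiply intersecting paths sum above the critical level; they are used in the paper for the intermediate dyadic range $2^{\ell_0}\leqslant k\leqslant n^{\rho}$ (Proposition \ref{prop;middle}), not for the deep tail. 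Without an analogue of Lemma \ref{lem;CvxAls} and the $\widetilde\psi$-induction, your proof does not close.

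A minor further remark: the paper proves convergence \emph{in probability} after a Skorohod coupling (Proposition \ref{prop;rsmPcvg}), which is stronger than the finite-dimensional weak convergence you target; this stronger form is what is later reused in the proof of Theorem \ref{thm;EPcv}. For Theorem \ref{thm;RSMcv} alone your weaker formulation would suffice.
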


The second extremal limit theorem establishes convergence in the
functional space $D(0,\infty)$. 
\begin{theorem} \label{thm;EPcv}
Under the assumptions of Theorem \ref{thm;RSMcv}, let $ \mathbb{M}_n
(t) = \max_{i\leqslant nt} X_i$, $t\in\bbr_+,\, n=1,2,\ldots$.  Then
\begin{equation} \label{eq;EPcv0.1}
    \left\{ \frac{ \mathbb{M}_n(t) - b_n}{a_n}\right\}_{t>0} 
    \Rightarrow \left\{ \mathbb{M}(t) \right\}_{t>0} \quad \text{in } 
    \big( D(0,\infty), J_1 \big), 
\end{equation}
where $\{ \mathbb{M}(t)  \}_{t>0}$ is the stochastic  process in 
(\ref{eq;EP0.0}). 
\end{theorem}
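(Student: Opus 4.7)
The strategy is to deduce the functional convergence \eqref{eq;EPcv0.1} from the sup-measure convergence \eqref{eq;RSMcv0.0} already established in Theorem \ref{thm;RSMcv}. Setting $\tilde{\mathcal{M}}_n(B) := (\mathcal{M}_n(B) - b_n)/a_n = \max_{t \in nB}(X_t - b_n)/a_n$, which is itself a random sup-measure, one has the pointwise identity $(\mathbb{M}_n(t) - b_n)/a_n = \tilde{\mathcal{M}}_n([0,t])$. Theorem \ref{thm;RSMcv} gives $\tilde{\mathcal{M}}_n \Rightarrow \mathcal{M}$ in $\mathrm{SM}([0,\infty))$, so the task reduces to extracting $J_1$-convergence of the càdlàg paths $t\mapsto \tilde{\mathcal{M}}_n([0,t])$ from the sup-vague convergence of $\tilde{\mathcal{M}}_n$ to $\mathcal{M}$, exploiting the monotone structure of these processes.

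First I would establish finite-dimensional convergence. Fix $0 < t_1 < \cdots < t_k$ and set $t_0 := 0$. Because $\mathbb{M}$ is continuous in probability (an observation made just before Proposition \ref{prop;marginEP}), for each fixed $t>0$ we have $\mathbb{M}(t-)=\mathbb{M}(t)$ almost surely, so each open interval $I_j = (t_{j-1}, t_j)$ satisfies $\PP\{\mathcal{M}(I_j)=\mathcal{M}(\overline{I_j})\}=1$. The characterization \eqref{eq;rsmWC1} of sup-vague convergence then yields joint weak convergence of the increments $(\tilde{\mathcal{M}}_n(I_j))_{1\leqslant j \leqslant k}$ to $(\mathcal{M}(I_j))_{1\leqslant j \leqslant k}$, and applying the continuous mapping theorem to the componentwise cumulative maximum yields
\begin{equation*}
  \left(\frac{\mathbb{M}_n(t_j) - b_n}{a_n}\right)_{1\leqslant j \leqslant k} \Rightarrow \bigl(\mathbb{M}(t_j)\bigr)_{1\leqslant j \leqslant k}.
\end{equation*}

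To upgrade to $J_1$ convergence in $D(0,\infty)$ I would verify tightness on each subinterval $[\epsilon,T] \subset (0,\infty)$ separately, and then use $D(0,\infty)=\cap_{\epsilon>0}D[\epsilon,\infty)$. Given $\eta > 0$, continuity in probability of $\mathbb{M}$ allows a partition $\epsilon = s_0 < s_1 < \cdots < s_m = T$ with $\PP\{\mathbb{M}(s_i) - \mathbb{M}(s_{i-1}) > \eta\}$ uniformly small in $i$; combining this with the joint convergence established above transfers the bound (up to an asymptotically negligible error) to the increments of $(\mathbb{M}_n - b_n)/a_n$ over the grid, and the monotonicity of the prelimits then upgrades this pointwise control to control of the $J_1$ modulus of continuity, giving tightness via Billingsley's criterion.

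The principal subtlety is that sup-vague convergence of sup-measures in general only implies $M_1$ (rather than $J_1$) convergence of the associated extremal processes; it is the nondecreasing nature of both $(\mathbb{M}_n - b_n)/a_n$ and $\mathbb{M}$, together with the absence of fixed discontinuities of the limit, that forces each jump of $\mathbb{M}$ to be matched by a jump of the prelimit at an asymptotically nearby location, which is what $J_1$ requires. Verifying this jump-matching rigorously, while handling the boundary behaviour $\mathbb{M}(t)\to-\infty$ as $t\downarrow 0$ noted in Remark \ref{rk:prop.M} (which is why the statement is over $(0,\infty)$ rather than $[0,\infty)$), will be the main technical work of the proof.
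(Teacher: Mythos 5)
Your finite-dimensional step is essentially fine (and is, in fact, weaker than what the paper has available: under the Skorohod coupling used to prove Theorem \ref{thm;RSMcv}, the convergence $\mathcal{M}_n(B)\to\mathcal{M}(B)$ already holds in probability). The gap is in the passage to $J_1$. You assert that the nondecreasing nature of the prelimit and limit, together with the absence of fixed discontinuities of $\mathbb{M}$, ``forces each jump of $\mathbb{M}$ to be matched by a jump of the prelimit at an asymptotically nearby location.'' This is false as a general principle: for monotone c\`adl\`ag processes, finite-dimensional (or sup-vague) convergence plus continuity in probability of the limit yields only $M_1$ convergence. Nothing in that information rules out the prelimit achieving a single limiting jump of size $c$ by two separate jumps of size roughly $c/2$ at locations whose distance tends to $0$; in that scenario the $J_1$ modulus $w'(\delta)$ stays bounded below by about $c/2$ for every $\delta$, so $J_1$ tightness fails while all your hypotheses hold. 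Likewise, your proposed tightness partition cannot be built: continuity in probability of $\mathbb{M}$ only says there are no \emph{fixed} discontinuities; since $\mathbb{M}$ has genuine (random) jumps, one cannot choose a deterministic grid $\epsilon=s_0<\cdots<s_m=T$ making $\PP\{\mathbb{M}(s_i)-\mathbb{M}(s_{i-1})>\eta\}$ small \emph{simultaneously} over the whole grid in the way Billingsley's criterion needs -- some cell will contain a jump of $\mathbb{M}$ with non-negligible probability.

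What actually closes this gap in the paper is structural information that is not recoverable from the sup-measure convergence alone. Propositions \ref{prop;middle} and \ref{prop;bottom} show that, with probability close to $1$, $\mathbb{M}_n$ on $[T_1,T_2]$ is determined by finitely many clusters $I_{k,i;n}$, $1\leqslant k,i\leqslant N$; the proof of Proposition \ref{prop;top} gives the \emph{uniform} control $\max_{t\in I_{k,i;n}}|(X_t-b_n)/a_n-\Lambda_{k,i}|\to 0$, so each cluster carries a single asymptotic value and cannot split a limit jump in two; and $I_{k,i;n}/n\to\{W_{k,i}\}$ in the Hausdorff metric localizes each cluster near the corresponding limit jump time. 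With these three ingredients one builds the explicit $J_1$ time change $e_n$ by matching $\min(I_{k_s,i_s;n}/n)$ to $W_{k_s,i_s}$ and interpolating. To repair your argument you would need to import exactly this cluster-level analysis; it cannot be bypassed by monotonicity considerations.
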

\begin{remark} \label{rk:not.time.change}{\rm 
It follows from \eqref{eq;marginEP0.0} that the limiting process in
Theorem \ref{thm;EPcv} has one dimensional marginal distributions
equal to the one dimensional marginal distributions of the process 
$\bigl( X_G\bigl( t^{1-\beta+\beta/C_{\alpha,\beta}}\bigr), \,
t\geq 0\bigr)$, where $(X_G(t),\, t\geq 0)$ is a shifted Gumbel extremal
process, i.e. a nondecreasing process satisfying 
\begin{equation} \label{e:extr.Gumbel}
\PP\bigl( X_G(t_i)\leq x_i, \, i=1,\ldots, k\bigr) =
\exp\left\{ -K(\alpha,\beta)\sum_{i=1}^k (t_i-t_{i-1})e^{-x_i}\right\}
\end{equation}
for $0=t_0<t_1<\cdots <t_k$ and $x_1\leq x_2\leq \cdots\leq x_k$.

We recall that in \cite{chen:samorodnitsky:2020}, similar extremal limit theorems
are proved for stationary 
processes with marginal tails heavier than the ones in
Theorem \ref{thm;EPcv}. The limiting processes therein are, in distribution,
the power time changes of the standard Gumbel extremal process. 
Analogous time-changed results for Fr\'echet extremal processes are showed in 
\cite{owada:samorodnitsky:2015a} and \cite{lacaux:samorodnitsky:2016}.

Interestingly, the law of the limiting process in Theorem
\ref{thm;EPcv} 
is different from the law of the power time change of
the Gumbel extremal 
process in \eqref{e:extr.Gumbel}. Indeed, if the two processes had the
same law, we would have by \eqref{e:joint.incr} and
\eqref{eq;marginEP0.0}, for $0<t_1<t_2$ and $x_1\leq x_2$,
\begin{align*}
&\exp\left\{ -K(\alpha,\beta) \left[
  t_1^{1-\beta+\beta/C_{\alpha,\beta}}e^{-x_1}+
  (t_2^{1-\beta+\beta/C_{\alpha,\beta}}-t_1^{1-\beta+\beta/C_{\alpha,\beta}})
  e^{-x_2}\right]\right\} \\
 =& \PP\left(  X_G\bigl(
    t_1^{1-\beta+\beta/C_{\alpha,\beta}}\bigr)\leq x_1,
 X_G\bigl(
    t_2^{1-\beta+\beta/C_{\alpha,\beta}}\bigr)\leq x_2\right)   \\
  =& \PP\left( M(t_1)\leq x_1, \, M\bigl( (t_1,t_2]\leq x_2\right) \\
  =& \exp\biggl\{ - \Gamma\bigl( 1-1/C_{\alpha,\beta}\bigr)
      \int_0^{\infty} (1-\beta)y^{-\beta}dy \, \EE
     \biggl[e^{-C_{\alpha,\beta} x_1}  Z^\leftarrow \bigl((t_1-y)\bigr)_+
      \\
  & + e^{-C_{\alpha,\beta} x_2}  \left[Z^\leftarrow \bigl((t_2-y)\bigr)_+
     -Z^\leftarrow (t_1-y)_+ \right]\biggr]^{1/C_{\alpha,\beta}}  
\biggr\},
\end{align*}
and due to the connection between the constants in
\eqref{e:joint.incr} and \eqref{eq;marginEP0.0} this reduces to 
\begin{align*}
&\EE \int_0^{\infty} (1-\beta)y^{-\beta}dy \left( e^{-C_{\alpha,\beta}
  x_1}  Z^\leftarrow \bigl((t_1-y)\bigr)_+\right)
  ^{1/C_{\alpha,\beta}} \\
+& \EE \int_0^{\infty} (1-\beta)y^{-\beta}dy \left[ \left( e^{-C_{\alpha,\beta}
  x_2}  Z^\leftarrow \bigl((t_2-y)\bigr)_+\right)
   ^{1/C_{\alpha,\beta}}  \right.\\
&\left. \hskip 1.3 in 
-\left( e^{-C_{\alpha,\beta}
  x_2}  Z^\leftarrow \bigl((t_1-y)\bigr)_+\right)
                                      ^{1/C_{\alpha,\beta}} \right] \\
=&\EE \int_0^{\infty} (1-\beta)y^{-\beta}dy \biggl[  e^{-C_{\alpha,\beta}
   x_1}  Z^\leftarrow \bigl((t_1-y)\bigr)_+ \\
 & \hskip 1.3in  +e^{-C_{\alpha,\beta}
  x_2}  \bigr[Z^\leftarrow \bigl((t_2-y)\bigr)_+      -Z^\leftarrow
  (t_1-y)_+ \bigr]  \biggr]^{1/C_{\alpha,\beta}}.
\end{align*}
We argue that this is impossible since $C_{\alpha,\beta}>1$. Indeed,
for every $t_1>0$ and $t_2,t_3>t_1$, a simple convexity argument shows
that for $C>1$ we have
\begin{equation} \label{e:convexity}
t_2^C-t_1^C+t_3^C< (t_2-t_1+t_3)^C.
\end{equation}
Now apply \eqref{e:convexity} with
\begin{align*}
&t_1=e^{-x_2} \bigl[Z^\leftarrow
\bigl((t_1-y)\bigr)_+\bigr]^{1/C_{\alpha,\beta}}, \
t_2=e^{-x_1} \bigl[Z^\leftarrow
\bigl((t_1-y)\bigr)_+\bigr]^{1/C_{\alpha,\beta}}, \\
&t_3=e^{-x_2} \bigl[Z^\leftarrow
\bigl((t_2-y)\bigr)_+\bigr]^{1/C_{\alpha,\beta}}.
\end{align*}
}

\end{remark}

\medskip

\begin{remark}{\rm 
We will prove both theorems with the time domain restricted to the
interval $[0,1]$. The general case is only notationally different.
}
\end{remark}

As it is often done when analyzing the extremes of subexponential processes,  we start by decomposing
the process in \eqref{eq;SEP0.0} into a  sum of two independent processes.  One will
collect the large Poissonian contributions of the original process and the
other will collect the small such contributions. Note that, by
\eqref{eq;LevyMea0.1}, we can choose $x_0>0$ (which we assume
to be 1 for notational simplicity)  satisfying 
    \begin{equation} \label{eq;x0}
    \left( \frac{1}{x^{1-\alpha} L_\alpha (x)} \right)^\prime 
    < 0 \quad \text{for all } \, x > x_0, 
  \end{equation}
and  we split the random measure $M$ in (\ref{eq;SEP0.0}) into a  sum 
$M\eid M^{(1)} + M^{(2)}$
of two  independent infinitely divisible random measures $M^{(1)}$ and
$M^{(2)}$  with the same control measure as $M$ and 
with  constant local characteristic 
$\left(0,[\nu]_{(x_0,\infty)},0
    \right)$ and $\left(\sigma^2,[\nu]_{(-\infty,x_0]},b
    \right)$, respectively. We define  two independent stationary
    infinitely divisible processes by 
\begin{equation} \label{eq;SEP0.1}
    X^{(i)}_t    = \int_E 1_{A_0} \circ \theta ^t (x)  M^{(i)}(d x ),   
    \quad t \in \bbz, , i =1,2. 
\end{equation} 
This gives us a desired decomposition 
\begin{equation} \label{eq;SEP0.2}
    \{X_t\}_{t\in \bbz} \eid   \{ X^{(1)}  _ t \}_{t\in \bbz}  + 
  \{X^{(2)} _t \}_{t\in \bbz}.
\end{equation}
By construction, the random variables $\{ X^{(1)}_t \}_{t\in \bbz}$ are
compound Possion. For each $n\in\bbn$, it is convenient to take a series representation of
$\{ X^{(1)}_t \}_ {0\leqslant t \leqslant n}$, which arranges the
Poissonian jumps in the decreasing order. The
representation uses crucially the zero sets $(I_{k;n})$ defined
in \eqref{eq;Ikn}.  It follows 
from Corollary 3.4.2 in \cite{samorodnitsky:2016:SPLRD} (see also (4.12) in
\cite{chen:samorodnitsky:2020}) that
\begin{equation} \label{eq;seriesREP}
\left( X^{(1)}_t  \right)  _ {0\leqslant t \leqslant n}
\eid \left(  \sum_{j\geqslant 1} V_1\left( w_n/\Gamma_j
  \right) 1_{\{ t \in I_{j;n} \} }     \right)_{_ {0\leqslant t
    \leqslant n}}.
\end{equation}
Here $(\Gamma_j)$ are the ordered arrival times of a unit rate Poisson
process on $\bbr_+$ independent of the $i.i.d.$ zero sets
$(I_{j;n})$. Furthermore, $V_1$ is a truncated function $V$ in
\eqref{eq;aux-h}: 
$$
    V_1 (y): = 
    \begin{cases}
    V(y), \; &\text{if } y > 1 \big/ \overline{\nu}(x_0) \\
    0,  &\text{otherwise}
    \end{cases}  . 
$$
For notational simplicity,  we will hereafter use \eqref{eq;seriesREP} with $V$ instead of
$V_1$. We keep in mind that this function  vanishes in a neighborhood of $0$.

In the sequel, we will view $\{ X^{(1)}_t \}_{t\in \bbz}$ as
defined by the series in \eqref{eq;seriesREP} and write 
\begin{equation} \label{eq;rsmX0.0}
    \mathcal{M}_n (B)  = 
    \max_{ t \in nB} \left\{
    X^{(2)}_t  +
    \sum_{j\geqslant 1} V \left( w_n/\Gamma_j \right) 1_{\{ t \in I_{j;n} \} }  
    \right \}, \quad B \in \mathcal{B}([0,1]).
  \end{equation}
The proofs of Theorems \ref{thm;RSMcv} and \ref{thm;EPcv} 
  use a number
of random sup-measures related to \eqref{eq;rsmX0.0} and we list them
below. They use the random sets defined in \eqref{eq;Ik,i;n}.
We also use the random sets 
\begin{align} \label{e:hat.I}
&\widehat I_{k;n}= I_{k;n}\setminus \cup_{j=1}^{k-1} I_{j;n}, \ k\geqslant
    1, \\
 &\widehat I_{k,i;n}= I_{k,i;n}\setminus \cup_{j=1}^{i-1} I_{k,j;n}, \ k,i\geqslant
    1. \notag 
\end{align}

For 
$B\subset [0,1]$  and $n\in \bbn$, we define  for $k,i,K \in \bbn$, 
\begin{align}
& \mathcal{M}_{k,i;n} (B)  =  
    \max_{ t \in  \widehat{I}_{k,i;n} \cap n B }
    \left\{  X ^{(2)} _t + 
    \sum_{j\geqslant 1} V \left( w_n/\Gamma_j \right) 1_{\{ t \in I_{j;n} \} }    
       \right \} ; 
                \label{eq;Mki;n}  \\
&\mathcal{M}_{k;n} (B) =  \max_{ t \in  \widehat{I}_{k;n} \cap n B }
    \left\{  X_t^{(2)} + 
    \sum_{j\geqslant 1} V\left( w_n/\Gamma_j \right) 1_{\{ t \in I_{j;n} \} }   
     \right \};
    \label{eq;Mk;n}  \\   
& \mathcal{M}_{[K];n} (B)  = \bigvee_{k=1} ^ K 
  \mathcal{M}_{k;n} (B).
   \label{eq;M[K];n}  
\end{align}
Furthermore, referring to the random sup-measure $\mathcal{M}$ in \eqref{eq;limRSM0.0}
we also define  
\begin{align} 
    & \mathcal{M}_{k,i} (B) =
    \begin{cases}
     \Lambda_{k,i}  , \quad & 
     W_{k,i}\in B \\
     - \infty ,  &  W_{k,i} \not \in  B
    \end{cases} , \quad k,i\in \bbn;
    \label{eq;Mki} \\
    &\mathcal{M}_{k} (B) =
    \bigvee_{i=1}^{\infty} \mathcal{M}_{k,i}(B) ,
    \quad k \in \bbn;
    \label{eq;Mk} \\
    &  \mathcal{M}_{[K]} (B) = \bigvee_{k=1} ^ K \mathcal{M}_{k} (B),
      \quad K \in \bbn.
    \label{eq;M[K]}
\end{align}

\bigskip

The proofs of Theorems \ref{thm;RSMcv} and \ref{thm;EPcv} rely heavily
on Theorem \ref{thm;4joint}. By the Skorohod embedding we may assume
that all random elements appearing in \eqref{eq;rsmX0.0},
\eqref{eq;Mki;n},  \eqref{eq;Mk;n}, 
and \eqref{eq;M[K];n} are defined on 
a common probability space $(\Omega,\mathscr{F},\PP)$ and the
convergence in Theorem \ref{thm;4joint} holds as the $a.s.$ convergence
for these random elements. Furthermore, the random elements appearing
as the limit in the right hand side of \eqref{eq;4joint0.0} are used
to construct the points of the point processes $(\mathbb{C}_k)$ via
\eqref{eq;limRSM[0,1]}. The random variables $(\Gamma_j)$ are already
naturally coupled via \eqref{eq;rsmX0.0} and \eqref{eq;limRSM[0,1]}. 
This way the random sup-measure $\mathcal{M}$
is coupled to the random sup-measures $(\mathcal{M}_n)$. This setup
will be in force  for the
duration of this section, and it follows from
\eqref{eq;rsmWC1} that the following proposition suffices to prove
Theorem \ref{thm;RSMcv}. 
\begin{proposition} \label{prop;rsmPcvg}
For each open interval $B \subset [0,1]$, 
\begin{equation} \label{eq;rsmPcvg0.0}
    \frac{\mathcal{M}_n(B) - b_n}{a_n } 
    \stackrel{P}{\longrightarrow} \mathcal{M}(B). 
\end{equation}
\end{proposition}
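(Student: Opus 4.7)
Fix an open interval $B\subset[0,1]$. The proof proceeds by truncation: first approximate $\mathcal{M}_n(B)$ by $\mathcal{M}_{[K];n}(B)$ for large $K$, then show $(\mathcal{M}_{[K];n}(B)-b_n)/a_n\to \mathcal{M}_{[K]}(B)$ in probability, and finally let $K\to\infty$ and use $\mathcal{M}_{[K]}(B)\uparrow \mathcal{M}(B)$, which is built into the construction of $\mathcal{M}$.

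\textbf{Step 1 (Truncation).} I would establish
$$
\lim_{K\to\infty}\limsup_{n\to\infty}\PP\bigl( |\mathcal{M}_n(B)-\mathcal{M}_{[K];n}(B)|>\epsilon\, a_n\bigr)=0
$$
for every $\epsilon>0$, by combining three ingredients: (a) the process $X^{(2)}$ is built from Poisson atoms bounded by $x_0$, so by a Gaussian-type tail bound one has $\max_{0\leqslant t\leqslant n}|X^{(2)}_t| = o_P(a_n)$; (b) on $nB\setminus \bigcup_{k=1}^K \widehat{I}_{k;n}$ the dominant contribution at any time $t$ is controlled by $V(w_n/\Gamma_{K+1})$ (primary) plus one secondary atom of size at most $V(c_\infty\vartheta_n)+O_P(a_n)$, yielding a total below $b_n-a_n\log\Gamma_{K+1}+o_P(a_n)$, which is smaller than $b_n-\epsilon a_n$ with high probability once $K$ is large; (c) the cumulative effect of deeper intersections is tamed by the quenched exponential integrability of $\#I_{1,1;n}$ in Proposition \ref{prop;1,1card} and the polynomial bound on $\#I_{1;n}$ in Proposition \ref{prop;ALLcard}, which together force the tail sum $\sum_{j>K}V(w_n/\Gamma_j)\mathbf{1}_{t\in I_{j;n}}$ to be $o_P(a_n)$ after removing its first nontrivial term.

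\textbf{Step 2 (Truncated limit).} Using the Skorohod coupling installed just before the Proposition, I would decompose the maximum on each cluster $\widehat{I}_{k;n}\cap nB$ into the pieces $\widehat{I}_{k,i;n}\cap nB$, $i\geqslant 1$, plus a residual set on which only the single atom $V(w_n/\Gamma_k)$ is active. On $\widehat{I}_{k,i;n}\cap nB$ the leading contribution is $V(w_n/\Gamma_k)+V(w_n/\Gamma_{j_{k,i;n}})$. Two Taylor-type expansions, based on $V'(y)=h(V(y))/y$ and the slow variation of $V$, give for bounded $x>0$
$$
V(w_n x)-V(w_n) = a_n\log x + o(a_n),\qquad V(\vartheta_n x)-V(\vartheta_n) = C_{\alpha,\beta}^{-1}\, a_n\log x + o(a_n),
$$
where the factor $C_{\alpha,\beta}^{-1}$ arises as the limit of $h(V(\vartheta_n))/h(V(w_n))$ via regular variation of $h$ together with $\log \vartheta_n/\log w_n\to \beta/(1-\beta)$ and the fact that $V$ is slowly varying. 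Combining these expansions with $j_{k,i;n}\overline{p}_{k;n}\to \Gamma_{k,i}$, $w_n\overline{p}_{k;n}/\vartheta_n\to c_\infty Z_k^{\ast\leftarrow}(1)$, and $I_{k,i;n}/n\to \{J_{k,i}\}$ from Theorem \ref{thm;4joint}, and with $\Gamma_{j_{k,i;n}}/j_{k,i;n}\to 1$, identifies the normalized maximum on $\widehat{I}_{k,i;n}$ with the pair
$$
\Bigl(-\log\Gamma_k+C_{\alpha,\beta}^{-1}(-\log\Gamma_{k,i}+\log Z_k^{\ast\leftarrow}(1)),\; J_{k,i}\Bigr),
$$
matching the representation \eqref{eq;limRSM[0,1]} of $\mathcal{M}_{[K]}(B)$.

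\textbf{Main obstacle.} The subtlest point is Step~1(b)--(c): because the ``single large jump'' principle fails, any time $t$ that lies in several $I_{j;n}$ receives contributions from multiple atoms $V(w_n/\Gamma_j)$, and one must rule out the possibility that atoms with $j>K$ can cumulatively add up to something of order $b_n+a_n$. This is exactly where the exponential tail bound for $\#I_{1,1;n}$ in Proposition \ref{prop;1,1card} and the polynomial size bound for $\#I_{1;n}$ in Proposition \ref{prop;ALLcard} are indispensable: they force the number of significant intersections to be tight and their combined value to be $V(c_\infty\vartheta_n)+O_P(a_n)$, so that the truncation error is indeed $o_P(a_n)$ uniformly in $n$ as $K\to\infty$.
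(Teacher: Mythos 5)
Your overall architecture coincides with the paper's: truncate to the top $K$ clusters, prove convergence cluster by cluster, and finish with a convergence-together argument; your Step 2 correctly reproduces the content of the paper's Proposition \ref{prop;top} (the expansions via \eqref{e:pi.var}, the factor $C_{\alpha,\beta}^{-1}$ from $h\circ V(\vartheta_n)/h\circ V(w_n)$, and the identification with \eqref{eq;limRSM[0,1]}). The problem is Step 1, which is where essentially all of the work in the paper lies (Propositions \ref{prop;middle} and \ref{prop;bottom}), and your sketch of it contains assertions that are either false or beg the question.

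First, $\max_{0\leqslant t\leqslant n}|X^{(2)}_t|=o_P(a_n)$ is not true in general: the tails of $X^{(2)}_0$ are lighter than exponential, so the maximum over $n$ points is of order $\log n/\log\log n$, whereas $a_n\asymp(\log n)^{1/\alpha-1}\mathscr{L}(\log n)$, which is $o(\log n)$ whenever $\alpha>1/2$. The paper only uses $\max_t X^{(2)}_t=o_P(V(w_n))$, and controls $X^{(2)}$ at scale $a_n$ only on the tight sets $I_{k,i;n}$. Second, your claim that for every cluster $k>K$ the secondary atom is at most $V(c_\infty\vartheta_n)+O_P(a_n)$ cannot hold uniformly: over the $\ell$-th dyadic block $2^\ell\leqslant k<2^{\ell+1}$ the relevant maxima grow with $\ell$ (up to $\ell\asymp\log n$), and the argument only closes because the decrease $V(w_n)-V(w_n/\Gamma_k)\gtrsim\ell a_n$ of the primary atom compensates, which is exactly the comparison $T_{1,\ell}>T_{2,\ell}$ in Proposition \ref{prop;middle} and requires $\rho$ small. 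Third, and most importantly, the assertion that the tail sum $\sum_{j>K}V(w_n/\Gamma_j)\mathbf{1}_{\{t\in I_{j;n}\}}$ is $o_P(a_n)$ after removing one term is precisely the quantitative failure of the single-large-jump principle that must be proved; it does not follow from Propositions \ref{prop;1,1card} and \ref{prop;ALLcard}, which only bound cardinalities of (pairwise) intersections and are used for union bounds over time points, not for controlling the accumulation of many moderate atoms at a single time. The paper handles this via the subexponential convolution estimates of Lemma \ref{lem;CvxAls}, the functions $\psi,\widetilde\psi$ of Lemma \ref{lem;psi}, and an inductive bootstrap over exponents $\rho_i$ in the proof of Proposition \ref{prop;bottom}; none of this machinery, or a substitute for it, appears in your proposal. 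As written, Step 1 is therefore a genuine gap rather than a compressed proof.
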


This proposition is a consequence of the   three statements below. 
\begin{proposition} \label{prop;top}
For each $k,i \in \bbn$  and each  open interval  $B \subset [0,1]$, 
\begin{equation} \label{eq;top0.0}
    \frac{\mathcal{M}_{k,i;n}(B) - b_n}{a_n} 
    \stackrel{P}{\longrightarrow} \mathcal{M}_{k,i}(B).
\end{equation}
\end{proposition}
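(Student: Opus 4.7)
The plan is to split the argument according to whether $J_{k,i}$ lies in the open interval $B$ or not, and then to identify the two dominant Poissonian contributions to $\mathcal{M}_{k,i;n}(B)$ on the event that it is not $-\infty$. By Theorem \ref{thm;4joint}, $I_{k,i;n}/n$ converges in the Fell topology to the singleton $\{J_{k,i}\}$, with $J_{k,i}\in \overline{R_k^*}$ almost surely. On the event $\{J_{k,i}\not\in \overline{B}\}$ (a continuity event for the limit), $\widehat{I}_{k,i;n}\cap nB=\emptyset$ with probability tending to one, so $\mathcal{M}_{k,i;n}(B)=-\infty=\mathcal{M}_{k,i}(B)$. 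On the event $\{J_{k,i}\in B\}$, the set $\widehat{I}_{k,i;n}\cap nB$ is nonempty with high probability, and I would then identify the leading asymptotics of the maximum over this set.

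For any $t\in\widehat{I}_{k,i;n}\subset I_{k;n}\cap I_{j_{k,i;n};n}$, the sum in \eqref{eq;Mki;n} carries two guaranteed large Poissonian contributions, namely $V(w_n/\Gamma_k)$ and $V(w_n/\Gamma_{j_{k,i;n}})$. Using the mean-value theorem together with $V'(y)=h(V(y))/y$ (a consequence of Assumption \ref{ass;locLevyMea} and the properties collected in Proposition \ref{prop;VanGogh}), I expand
\begin{align*}
\frac{V(w_n/\Gamma_k)-V(w_n)}{a_n} &\longrightarrow -\log \Gamma_k, \\
\frac{V(w_n/\Gamma_{j_{k,i;n}})-V(c_\infty\vartheta_n)}{a_n} &\longrightarrow \frac{1}{C_{\alpha,\beta}}\bigl(\log Z_k^{*\leftarrow}(1)-\log \Gamma_{k,i}\bigr).
\end{align*}
The first limit is immediate from $a_n=h(V(w_n))$. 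For the second, I use Theorem \ref{thm;4joint} to write $w_n/j_{k,i;n}\sim c_\infty Z_k^{*\leftarrow}(1)\vartheta_n/\Gamma_{k,i}$, expand $V$ around $c_\infty\vartheta_n$, and invoke the key ratio identity
\begin{equation*}
\frac{h(V(c_\infty\vartheta_n))}{h(V(w_n))}\longrightarrow \frac{1}{C_{\alpha,\beta}},
\end{equation*}
which follows from the slow variation of $V$, the semi-exponential form of $\overline\nu$, the explicit asymptotics of $w_n$ and $\vartheta_n$, and the definition $C_{\alpha,\beta}=((1-\beta)/\beta)^{1/\alpha-1}$. Summing the two contributions and subtracting the centering $b_n=V(w_n)+V(c_\infty\vartheta_n)$ reproduces exactly $\Lambda_{k,i}$ in the representation \eqref{eq;limRSM[0,1]}.

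The main obstacle, and the part that requires the most care, is showing that nothing else contributes to the maximum on the scale $a_n$. Three sources of spurious contributions need to be controlled. First, other Poissonian terms $V(w_n/\Gamma_j)\mathbf{1}_{\{t\in I_{j;n}\}}$ with $j\not\in\{k,j_{k,i;n}\}$ require $t$ to lie in a triple intersection $I_{k;n}\cap I_{j;n}\cap I_{j_{k,i;n};n}$; indices $j=j_{k,1;n},\ldots,j_{k,i-1;n}$ are killed by the truncation defining $\widehat{I}_{k,i;n}$, and for the remaining $j$ one controls the sum by combining the exponential integrability of $\#I_{1,1;n}$ from Proposition \ref{prop;1,1card}, the tail bound \eqref{eq;ALLcard0.0}, and the fact that $V(w_n/\Gamma_j)$ is at most of order $V(w_n)$ for $\Gamma_j\gtrsim 1$ while the triple-intersection probability is a higher-order infinitesimal. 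Second, the contribution of $X^{(2)}_t$ is shown to be $o_P(a_n)$ uniformly in $t\in[0,n]$ using the fact that the local Lévy measure of $M^{(2)}$ is supported in $(-\infty,x_0]$, so $X^{(2)}_t$ has all exponential moments and its maximum over $n$ terms is logarithmic in $n$, which is $o(a_n)$. Third, because $V$ is slowly varying, any discrepancy between the particular $t\in \widehat{I}_{k,i;n}\cap nB$ at which the maximum is attained and the location $J_{k,i}$ itself affects only higher-order remainders. Combining all three steps on the complementary events $\{J_{k,i}\in B\}$ and $\{J_{k,i}\not\in \overline{B}\}$ yields \eqref{eq;top0.0}.
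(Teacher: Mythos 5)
Your overall strategy coincides with the paper's: split on whether $J_{k,i}\in B$, isolate the two dominant Poissonian terms $V(w_n/\Gamma_k)$ and $V(w_n/\Gamma_{j_{k,i;n}})$, expand them via the auxiliary function and the ratio $h\circ V(c_\infty\vartheta_n)/h\circ V(w_n)\to 1/C_{\alpha,\beta}$, and match the limit with $\Lambda_{k,i}$ from \eqref{eq;limRSM[0,1]}. That part of your argument is correct and is essentially the paper's computation.

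There is, however, a genuine gap in your control of the residual. You claim that $\max_{0\leqslant t\leqslant n}X^{(2)}_t$ is logarithmic in $n$ and hence $o(a_n)$. But by \eqref{eq;asyV&G0.1} and \eqref{eq;wander0.2}, $a_n=h\circ V(w_n)\sim \alpha^{-1}\bigl((1-\beta)\log n\bigr)^{1/\alpha-1}\mathscr{L}(\log n)$, and since $\alpha$ ranges over all of $(0,1)$ the exponent $1/\alpha-1$ can be less than $1$ (indeed less than $1/2$ when $\alpha>2/3$). So $\log n$ is \emph{not} $o(a_n)$ for $\alpha\geqslant 1/2$, and even the sharper bound $\max_t X^{(2)}_t=O_P\bigl((\log n)^{1/2}\bigr)$ coming from a Gaussian component fails to be $o(a_n)$ when $\alpha\geqslant 2/3$. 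The uniform-over-$[0,n]$ bound is therefore the wrong tool. The correct (and simpler) argument, which is the one the paper uses, exploits the fact that the maximum defining the residual runs only over $t\in \widehat I_{k,i;n}\cap nB$: by Proposition \ref{prop;1,1card}\,$(\rom1)$ the cardinality $\# I_{k,i;n}$ is tight, and for each fixed $t$ the quantity $X^{(2)}_t+\sum_{j>j_{k,i;n}}V(w_n/\Gamma_j)1_{\{t\in I_{j;n}\}}$ is stochastically dominated by a random variable with the fixed law of $X_0$; the maximum of a tight number of such terms is $O_P(1)=o_P(a_n)$ since $a_n\to\infty$. This single observation also makes your triple-intersection analysis in step (a) unnecessary — you do not need the extra Poissonian contributions to vanish, only to be $O_P(1)$ at each of a tight number of sites.
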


In the next two propositions, $B$ is an  open subset of $[0,1]$ and 
we use the notation $\Omega_B =
\{\overline{R^\ast_1} \cap B \neq \emptyset \}$, 
\begin{proposition} \label{prop;middle}
 Let $\ell_n := \lfloor \rho \log n\rfloor$ with $\rho>0$. If $\rho$
 is small enough, then 
    \begin{equation}\label{eq;1n-mid0.0}
     \lim_{\ell_0 \to \infty}
     \liminf_{n \to \infty}
     \PP \left\{
    \mathcal{M}_{1;n} (B)  > \bigvee_{i=2^{\ell_0} }^{2^{\ell_n} - 1} 
    \mathcal{M}_{1,i;n} ([0,1])
     \, \bigg| \,
     \Omega_B
     \right\} = 1 
    \end{equation}
and
    \begin{equation}\label{eq;n-mid0.0}
     \lim_{\ell_0 \to \infty}
     \liminf_{n \to \infty}
     \PP \left\{
     \mathcal{M}_{n} (B)  > \bigvee_{k=2^{\ell_0} }^{2^{\ell_n} - 1} 
     \mathcal{M}_{k;n} ([0,1])
     \right\} = 1 .
    \end{equation}
\end{proposition}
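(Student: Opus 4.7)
The plan is to bound, for each $i$ (respectively $k$) in the middle range $[2^{\ell_0},2^{\ell_n})$, the scaled sup-measure $(\mathcal{M}_{1,i;n}([0,1])-b_n)/a_n$ (respectively $(\mathcal{M}_{k;n}([0,1])-b_n)/a_n$) uniformly from above, and to pair this with a finite-in-scale lower bound for $(\mathcal{M}_{1;n}(B)-b_n)/a_n$ (respectively $(\mathcal{M}_n(B)-b_n)/a_n$). For the upper bound in part (i), each summand inside the maximum in \eqref{eq;Mki;n} splits as $X^{(2)}_t + V(w_n/\Gamma_1) + V(w_n/\Gamma_{j_{1,i;n}})$ plus a triple-overlap remainder. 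Using the Gumbel MDA asymptotic $V(y/u)-V(y)=-h(V(y))\log u + o(h(V(y)))$, the ratio $h(V(c_\infty\vartheta_n))/a_n\to 1/C_{\alpha,\beta}$ (verifiable from the semi-exponential tail structure in Assumption \ref{ass;locLevyMea}), together with $w_n/j_{1,i;n}\approx c_\infty\vartheta_n Z_1^{\ast\leftarrow}(1)/\Gamma_{1,i}$ coming from Theorem \ref{thm;4joint}, the two leading Poissonian terms satisfy
\[
\bigl[V(w_n/\Gamma_1) + V(w_n/\Gamma_{j_{1,i;n}})\bigr] - b_n = a_n\Bigl[-\log\Gamma_1 + \tfrac{1}{C_{\alpha,\beta}}\log\tfrac{Z_1^{\ast\leftarrow}(1)}{\Gamma_{1,i}}\Bigr] + o_P(a_n).
\]
Since $\log\Gamma_{1,2^{\ell_0}}\to\infty$ a.s.\ as $\ell_0\to\infty$, the maximum of this quantity over $i\in[2^{\ell_0},2^{\ell_n})$ drifts to $-\infty$ in the scale $a_n$. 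The $X^{(2)}$-contribution is $o_P(a_n)$ uniformly, thanks to the bounded-jump structure of $M^{(2)}$ combined with the tail estimate of Proposition \ref{prop;ALLcard}, while the triple-overlap remainder is controlled via the exponential integrability in Proposition \ref{prop;1,1card} and a union bound over $i$; this latter step is where we use that $\rho$ may be chosen small, so that $2^{\ell_n}=n^{\rho\log 2}$ is beaten by the exponential estimate.

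For the matching lower bound on $\mathcal{M}_{1;n}(B)$, I would use that $\overline{R_1^\ast}$ is a shifted $\beta$-stable regenerative set, so on $\Omega_B$ the intersection $\overline{R_1^\ast}\cap B$ has positive $m^\phi$-measure a.s.\ and hence $\eta_1(B)>0$ on $\Omega_B$. Since the sample points $\{W_{1,i}\}_{i\geq 1}$ are conditionally i.i.d.\ with law $\eta_1$, for any $\varepsilon>0$ one can fix $I_0=I_0(\varepsilon)$ so that $\PP\bigl(\exists\, 1\leq i\leq I_0\colon W_{1,i}\in B\mid\Omega_B\bigr)\geq 1-\varepsilon$. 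On this event $\mathcal{M}_{1;n}(B)\geq \mathcal{M}_{1,i;n}(B)$ for the relevant $i$ (using $\widehat{I}_{1,i;n}\subset \widehat{I}_{1;n}=I_{1;n}$), and Proposition \ref{prop;top} identifies the limit as $a_n\Lambda_{1,i}+o_P(a_n)$, a finite random variable in scale $a_n$. Comparing with the middle-range upper bound and sending $\ell_0\to\infty$ yields \eqref{eq;1n-mid0.0}.

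Part (ii) proceeds analogously, but the divergence is now produced by the principal Poissonian term itself: $V(w_n/\Gamma_k)=V(w_n)-a_n\log\Gamma_k+o_P(a_n)$, so for $k\geq 2^{\ell_0}$,
\[
\mathcal{M}_{k;n}([0,1]) - b_n \leq -a_n\log\Gamma_{2^{\ell_0}} + O_P(a_n),
\]
where the $O_P(a_n)$ term absorbs the second-Poisson and error contributions controlled as above. The lower bound $\mathcal{M}_n(B)\geq \mathcal{M}_{[K_0];n}(B)$ is again handled via Proposition \ref{prop;top} and the representation \eqref{eq;limRSM[0,1]}, choosing $K_0$ large enough that some $W_{k,i}$ with $k\leq K_0$ lies in $B$ with probability approaching $\PP(\Omega_B)$.

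The principal obstacle is the uniform-in-index control over the polynomially-sized middle range $2^{\ell_n}=n^{\rho\log 2}$: the triple-overlap and $X^{(2)}$ error terms must be shown to be $o_P(a_n)$ simultaneously for all indices in this range, and this relies critically on the exponential-moment inputs from Propositions \ref{prop;1,1card} and \ref{prop;ALLcard} together with choosing $\rho$ small enough that the resulting union bound still vanishes as $n\to\infty$. A second subtle ingredient is the identification of $1/C_{\alpha,\beta}$ as the asymptotic ratio $h(V(c_\infty\vartheta_n))/a_n$, which is what ensures that the two natural scales $V(w_n)$ and $V(c_\infty\vartheta_n)$ combine to give exactly the centering $b_n$ and the single scale $a_n$.
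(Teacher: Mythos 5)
Your lower bounds (via Proposition \ref{prop;top} for part (i), and via the first few Poisson terms for part (ii)) match the paper's in spirit and are fine. The gap is in the upper bound over the middle range, and it is fatal as stated. You claim that, uniformly over $i$ (resp.\ $k$) up to $2^{\ell_n}=n^{\rho\log 2}$, the contribution of everything except the top one or two Poissonian terms is $o_P(a_n)$ (resp.\ $O_P(a_n)$). This is false. After conditioning, the residual field $X^{(2)}_t+\sum_{j>j_{1,i;n}}V(w_n/\Gamma_j)1_{\{t\in I_{j;n}\}}$ is stochastically dominated by i.i.d.\ copies of $X_0$ itself, whose tail is semi-exponential; the maximum of these over the roughly $2^\ell$ time points belonging to the block $i\in[2^\ell,2^{\ell+1})$ is of order $V(2^\ell)\asymp \ell^{1/\alpha}\mathscr{L}(\log n)$, and over the whole middle range up to $\ell_n=\rho\log n$ it is of order $(\rho\log n)^{1/\alpha}\mathscr{L}(\log n)$ --- the same order as $b_n$, not $o_P(a_n)=o_P\bigl((\log n)^{1/\alpha-1}\mathscr{L}(\log n)\bigr)$. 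Bounding the cardinalities via Propositions \ref{prop;1,1card} and \ref{prop;ALLcard} only caps the \emph{number} of such variables; it does not make their maximum small. Likewise, your expansion $V(w_n/\Gamma_{j_{1,i;n}})-V(c_\infty\vartheta_n)\approx -a_nC_{\alpha,\beta}^{-1}\log\Gamma_{1,i}$ is a fixed-$t$ asymptotic and cannot be applied uniformly for $\Gamma_{1,i}$ as large as $n^{\rho\log 2}$; there the correct second-order behaviour is $V(\vartheta_n)-V(\vartheta_n/2^{\ell})\asymp \ell(\log n)^{1/\alpha-1}\mathscr{L}(\log n)$.

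The actual mechanism is a block-by-block race that your proposal never runs: on the dyadic block $[2^\ell,2^{\ell+1})$ the residual maximum costs roughly $\ell^{1/\alpha}\mathscr{L}(\log n)$ (part (i)) or $v_{\ell;n}+c\,\ell h(v_{\ell;n})$ with $v_{\ell;n}=V(c2^{\ell}/\overline F(n))$ (part (ii)), while the loss in the leading Poissonian term(s) is $\asymp \ell(\log n)^{1/\alpha-1}\mathscr{L}(\log n)$; one must show the latter beats the former for every $\ell_0\le\ell\le\ell_n$, and sum the failure probabilities over $\ell$. The requirement that $\rho$ be small enters through $\ell^{1/\alpha}=\ell\cdot\ell^{1/\alpha-1}\le\ell(\rho\log n)^{1/\alpha-1}$, and in part (ii) the comparison of leading constants reduces to $(1-\beta-\rho\log2)^{1/\alpha-1}>(\beta+\rho\log2)^{1/\alpha-1}$, i.e.\ precisely to $\beta<1/2$. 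This constant comparison is where the ``moderate'' long-range-dependence restriction is used, and no version of a uniform $o_P(a_n)$ error bound can substitute for it.
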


\begin{proposition} \label{prop;bottom}
For any $\rho>0$, we have
    \begin{equation} \label{eq;n-btm0.0}
    \lim_{n\to \infty}
    \PP 
    \bigg\{
    \mathcal{M}_n(B) > \bigvee_{k\geqslant n^\rho} \mathcal{M}_{k;n} ([0,1])
    \bigg\} = 1 
    \end{equation}
 and 
    \begin{equation} \label{eq;1-btm0.0}
    \lim_{n\to \infty}
    \PP 
    \bigg\{
    \mathcal{M}_{1;n}(B) > \bigvee_{i\geqslant n^\rho}
    \mathcal{M}_{1,i;n} ([0,1]) \, \bigg| \, 
    \Omega_B     \bigg\} = 1     .
    \end{equation}
\end{proposition}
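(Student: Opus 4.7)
\emph{Proof plan.} I will focus on the first assertion \eqref{eq;n-btm0.0}; the second \eqref{eq;1-btm0.0} is structurally identical, with the outer index $k$ replaced by the intersection index $i$ and with the conditioning on $\Omega_B$ ensuring that the first cluster itself contributes. The overall strategy is to prove separately (a) that $\mathcal{M}_n(B)$ stays not too far below $b_n$ with probability close to one, and (b) that $\bigvee_{k\geq n^\rho}\mathcal{M}_{k;n}([0,1])$ falls below $b_n-Ma_n$ for any fixed $M$ with probability tending to one, and then to combine the two bounds.

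For (b), I will fix $k\geq n^\rho$ and use the fact that on $\widehat{I}_{k;n}$ no set $I_{j;n}$ with $j<k$ contributes, so the series representation \eqref{eq;seriesREP} gives the pointwise bound
$$\mathcal{M}_{k;n}([0,1]) \leq \max_{0\leq t\leq n}X_t^{(2)}+V(w_n/\Gamma_k)+\sum_{i\geq 1}V\bigl(w_n/\Gamma_{j_{k,i;n}}\bigr).$$
The first term will be $o_P(a_n)$ because $X^{(2)}$ has jumps bounded by $x_0$, so its marginals lie in the Cram\'er class and a union bound over $[0,n]$ together with Markov's inequality suffice. For the middle term I will use the asymptotics of $V$ derived from \eqref{eq;LevyMea0.0}, which give $V(y)\sim\bigl(\alpha L_\alpha(V(y))\log y\bigr)^{1/\alpha}$; together with $\Gamma_{\lfloor n^\rho\rfloor}/n^\rho\to 1$ almost surely and $\log w_n\sim(1-\beta)\log n$, this will yield
$$\frac{V(w_n)-V(w_n/n^\rho)}{a_n}\longrightarrow\infty,$$
so that $V(w_n/\Gamma_k)\leq b_n-Ma_n$ for any prescribed $M$ once $n$ is large. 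The tail sum will be handled by Proposition \ref{prop;1,1card}: exponential integrability of $\#I_{1,1;n}$ ensures, on a high-probability event, a geometric decay of the number of finite $j_{k,i;n}$, and each summand is bounded by $V(w_n/\Gamma_{\lfloor n^\rho\rfloor})$; uniformity in $k\geq n^\rho$ will follow from a union bound supported by the polynomial tail bound of Proposition \ref{prop;ALLcard}.

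For (a), the Skorohod coupling preceding the statement together with Proposition \ref{prop;top} yields $(\mathcal{M}_{[K];n}(B)-b_n)/a_n\stackrel{P}{\longrightarrow}\mathcal{M}_{[K]}(B)$ for each fixed $K$. Since the events $\{\overline{R_k^*}\cap B\neq\emptyset\}$ are i.i.d.\ with a positive success probability depending on $B$, we have $\PP\{\mathcal{M}_{[K]}(B)>-\infty\}\to 1$ as $K\to\infty$, and on this event $\mathcal{M}_{[K]}(B)$ is a tight random variable. Hence for any $\epsilon>0$ I can choose $K$ and $M'<\infty$ such that $\PP\{\mathcal{M}_{[K];n}(B)>b_n-M'a_n\}\geq 1-\epsilon$ for all large $n$, and since $\mathcal{M}_n(B)\geq\mathcal{M}_{[K];n}(B)$, the same bound holds for $\mathcal{M}_n(B)$. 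Combining with (b) applied with $M=M'+1$ and letting $\epsilon\downarrow 0$ will give \eqref{eq;n-btm0.0}.

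The main obstacle will be the uniform control in step (b) of the cross-contribution $\sum_i V(w_n/\Gamma_{j_{k,i;n}})$ simultaneously across all indices $k\geq n^\rho$. This forces a joint deployment of the exponential bound of Proposition \ref{prop;1,1card}, conditioned on the zero sets $(I_{k;n})$, and the polynomial bound of Proposition \ref{prop;ALLcard}, together with the sharp slow-variation asymptotics for $V$.
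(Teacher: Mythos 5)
There is a genuine gap in step (b), and it is the heart of the proposition. Your plan controls the tail contribution $\sum_{i\geqslant 1} V\bigl(w_n/\Gamma_{j_{k,i;n}}\bigr)$ by multiplying a bound on the \emph{number} of intersecting clusters by the maximal summand $V(w_n/n^\rho)$. This cannot work, for two reasons. First, the number of finite $j_{k,i;n}$ is not geometrically decaying: conditionally on $I_{k;n}$, each of the $\asymp w_n$ relevant indices $j>k$ intersects $I_{k;n}$ with probability $\overline p_{k;n}\asymp \vartheta_n/w_n$, so the number of intersecting clusters is of order $\vartheta_n\to\infty$ (this is exactly the content of the second component of Theorem \ref{thm;4joint}, $j_{k,i;n}\overline p_{k;n}\Rightarrow \Gamma_{k,i}$). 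Proposition \ref{prop;1,1card} bounds the cardinality of a single pairwise intersection $I_{k;n}\cap I_{j_{k,1;n};n}$, not the number of clusters meeting $I_{k;n}$. Second, and more fundamentally, even an $O_P(1)$ number of overlapping terms would destroy the argument: each term can be as large as $V(w_n/n^\rho)\asymp \bigl((1-\beta-\rho)\log n\bigr)^{1/\alpha}\mathscr{L}(\log n)$, and since $1/\alpha>1$ one has $m(1-\beta-\rho)^{1/\alpha}>(1-\beta)^{1/\alpha}+\beta^{1/\alpha}$ for finite $m$, so finitely many overlapping large Poisson points at a single time $t$ already push the sum above $b_n$. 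A worst-case (number of terms)$\times$(max term) bound is therefore off by a factor of order $(\log n)^{1/\alpha}/a_n\to\infty$, and no choice of high-probability event fixes this.

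What is actually needed — and what the paper does — is a probabilistic estimate on the event that the \emph{values} at a fixed site $t$ sum above the threshold: writing $\sum_{\Gamma_j>n^r}V(w_n/\Gamma_j)1_{\{0\in I_{j;n}\}}$ as a compound Poisson sum of truncated semi-exponential variables, one shows via a subexponential convolution bound (Lemma \ref{lem;CvxAls}) that reaching the level $V(w_n)+V(\vartheta_n)-V(w_n/n^r)$ requires at least $\lceil\psi(r)\rceil$ overlapping large jumps and hence has probability $\lesssim n^{-\widetilde\psi(r)+\epsilon}$, with $\psi,\widetilde\psi$ as in Lemma \ref{lem;psi}. Even then a single union bound over all $k\geqslant n^\rho$ and all $t$ fails for small $\rho$ (since $\widetilde\psi(\rho)$ need not exceed $1$), which forces the band-by-band bootstrap over $n^{\rho_i}<\Gamma_k\leqslant n^{\rho_{i+1}}$ using $\widetilde\psi(\rho_i)>\rho_i+\beta$; here Proposition \ref{prop;ALLcard} enters only to bound the number of sites per cluster, not the exceedance probability. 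Your part (a) — the lower bound on $\mathcal{M}_n(B)$ via tightness of $\mathcal{M}_{[K];n}(B)$ — is fine and matches the paper's use of \eqref{eq;n-mid1.1}, but without the multiple-large-jump analysis the upper bound in (b), and hence the proposition, does not follow.
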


We start by showing how Proposition \ref{prop;rsmPcvg} follows from
the three statements above.

\begin{proof}[Proof of Proposition \ref{prop;rsmPcvg}]
First, we note that by \eqref{eq;1n-mid0.0} and \eqref{eq;1-btm0.0} 
$$
    \lim_{ m\to \infty }
  \liminf_{n \to \infty} 
  \mathbb{P} 
  \left\{ \mathcal{M}_{1;n} (B) = \bigvee_{i=1}^m \mathcal{M}_{1,i;n} (B)
  \, \Big|  \,  \Omega_B
   \right \} = 1 .
$$

Second, for almost every $\omega\in\Omega_B^c$, we also have
$\overline{R^\ast_1} \cap \overline{B}=\emptyset$ because the stable
regenerative set does not hit fixed points. Due to
$I_{1;n}/n\to \overline{R^\ast_1}$ $a.s.$, we therefore see that for almost every
$\omega\in\Omega_B^c$, $I_{1;n}\cap
nB=\emptyset$ for all $n$ large enough, and hence,
$\mathcal{M}_{1;n} (B)  =  \mathcal{M}_{1,i;n} (B)   = - \infty$  
for all $i\in \bbn$. We deduce that 
$$
  \lim_{m\to \infty }
  \liminf_{n \to \infty} 
  \mathbb{P} 
  \left\{ \mathcal{M}_{1;n} (B) = \bigvee_{i=1}^m \mathcal{M}_{1,i;n} (B)
   \right \} = 1.
   $$
This identity obviously remains valid if $\mathcal{M}_{1;n}$ and $\mathcal{M}_{1,i;n}$ are replaced by
$\mathcal{M}_{k;n}$ and $\mathcal{M}_{k,i;n}$ respectively. 
Thus for any $K \in \bbn$,
$$
  \lim_{m\to \infty }
  \liminf_{n \to \infty} 
  \mathbb{P} 
  \left\{ \mathcal{M}_{k;n} (B) = \bigvee_{i=1}^m \mathcal{M}_{k,i;n} (B), 1\leqslant k \leqslant K
   \right \} = 1. 
$$

We proceed to note from \eqref{eq;n-mid0.0} and
\eqref{eq;n-btm0.0}  that 
$$
    \lim_{ K\to \infty }
  \liminf_{n \to \infty} 
  \mathbb{P} 
  \left\{ \mathcal{M}_n(B) = \mathcal{M}_{[K];n} (B)
  \right \} = 1, 
$$
and conclude that 
\begin{equation} \label{eq;rsmPcvg2.1}
 \lim _{\substack{K\to \infty \\ m \to \infty}}
 \liminf_{n \to \infty} 
  \mathbb{P} 
  \Big\{ \bigvee_{\substack{1\leqslant k \leqslant K \\ 1\leqslant i \leqslant m}} \mathcal{M}_{k,i;n}(B) =
  \mathcal{M}_n (B)
  \Big \} = 1 .
\end{equation}

Finally, we note that
\begin{equation} \label{eq;rsmPcvg2.2}
\lim  _{\substack{K\to \infty \\ m \to \infty}}
\bigvee_{\substack{1\leqslant k \leqslant K \\ 1\leqslant i \leqslant
    m}} \mathcal{M}_{k,i}(B) =   \mathcal{M} (B) \ \ a.s..
\end{equation}
By the  standard
``convergent together" argument, (\ref{eq;rsmPcvg0.0}) follows from Proposition \ref{prop;top},
\eqref{eq;rsmPcvg2.1} and \eqref{eq;rsmPcvg2.2}. 
\end{proof}

The proof of Theorem \ref{thm;RSMcv} is, therefore, complete apart
from proving Propositions \ref{prop;top}, \ref{prop;middle} and
\ref{prop;bottom} which we now commence.

\begin{proof}[Proof of Proposition \ref{prop;top}]
 We consider $k=1$. Recall that for any $i$, a.s.,
$$
\frac{1}{n}(I_{1,1;n}, \ldots, I_{1,i;n}  ) \to
\big( \{J_{1,1}\}, \ldots, \{J_{1,i}\}  \big)  
$$
and $J_{1,1},\ldots,J_{1,i}$ are distinct points. Hence, the sets 
$I_{1,1;n},\ldots, I_{1,i;n}$ are disjoint
for all sufficiently large $n$, so it is enough co consider the case
$i=1$.

Once again, since $I_{1,1;n}/n\to \{J_{1,1}\}$ $a.s.$,  for  
almost every $\omega \in  \{ J_{1,1}\notin B\}$ we have  $I_{1,1;n}\cap
nB=\emptyset$ for all $n$ large enough. Hence for such $\omega$ both sides
of \eqref{eq;top0.0} are equal (to $-\infty$), and so we only need to show that 
\begin{equation} \label{eq;top1.2}
\frac{ \mathcal{M}_{1,1;n} (B) 
     -b_n}
   {a_n} \stackrel{P}{\rightarrow} \Lambda_{1,1} 
   \ \  \text{on } \    \{ J_{1,1} \in B\}.
\end{equation}

To this end, observe that on the event  $\{  I_{1,1;n} \cap nB \neq \emptyset \}$, 
\begin{align*}
    \mathcal{M}_{1,1;n} (B)  = & V \left(  w_n/\Gamma_1  \right) + V
                                 \left(  w_n/\Gamma_{j_{1,1;n}}    \right)  \\
    & + \max_{ t \in I_{1,1;n} \cap nB }
    \Bigl\{   
     X_t ^{(2)} + \sum_{j > j_{1,1;n} }  V \left(  w_n/\Gamma_j  \right) 1_{\{ t \in I_{j;n} \}}
    \Bigr\} . 
\end{align*}

First, it follows from \eqref{e:pi.var} that as $n\to\infty$, 
$$
  \frac{V \left(  w_n \big/ \Gamma_1   \right)    - V(w_n)}{a_n}  \to
  -\log \Gamma_1.
$$

Second, by the strong law of large numbers,
\begin{align*}
  V \left( w_n \big/ \Gamma_{j_{1,1;n}}\right) -V\left( w_n/j_{1,1;n}\right)
& = o\left( h\circ V\bigl(  w_n/j_{1,1;n}\bigr)\right) \leq o\bigl( h\circ V(w_n)\bigr).
\end{align*}
By Theorem \ref{thm;4joint} 
\begin{align*}
V\left( w_n/j_{1,1;n}\right)-V\left(  w_n
  \overline{p}_{1;n}\big/\Gamma_{1,1}\right) & =  o\left( h\circ V\left(  w_n
  \overline{p}_{1;n}\big/\Gamma_{1,1}\right) \right)  \\
  & \leq o\bigl( 
h\circ V(w_n)\bigr),  \\
V\left(  w_n
  \overline{p}_{1;n}\big/\Gamma_{1,1}\right) - 
V\left( c_\infty \vartheta_n
  Z_1^{\ast \leftarrow} (1)  \big/\Gamma_{1,1}\right) & =
  o\left( h\circ V\left( c_\infty \vartheta_n
  Z_1^{\ast \leftarrow} (1)  \big/\Gamma_{1,1}\right) \right)  \\
  & \leq  o\bigl(
h\circ V(w_n)\bigr).
\end{align*}
Apply \eqref{e:pi.var} again to get 
$$
\frac{
V\left( c_\infty \vartheta_n
  Z_1^{\ast \leftarrow} (1)  \big/\Gamma_{1,1}\right) - V ( c_\infty \vartheta_n ) }
  { h \circ V (c_\infty \vartheta_n )  }
= \log Z_1^{\ast \leftarrow} (1) - \log \Gamma_{1,1} + o (1).
$$
Due to \eqref{eq;asyV&G0.1}, we have
$$
\frac { h \circ V (c_\infty \vartheta_n )} {h\circ V(w_n)} \sim
\left( \frac{\log w_n}{\log \vartheta_n }\right)^{1/\alpha-1}
\to \frac{ 1 }{C_{\alpha,\beta}}. 
$$
This implies that 
\begin{equation} \label{e:G.11n}
\frac{V \left( w_n \big/ \Gamma_{j_{1,1;n}}\right) -V ( c_\infty
  \vartheta_n)}{a_n} \to C_{\alpha,\beta}\bigl(-\log \Gamma_{1,1} + \log Z_1^{\ast
  \leftarrow}(1)\bigr),
\end{equation}
and so, 
\begin{align*}
   & \frac{V \left(  w_n \big/ \Gamma_1   \right) + V \left( w_n \big/ \Gamma_{j_{1,1;n}}
    \right)  - b_n}{a_n}  \\
    \to &  -\log \Gamma_1 +
  C_{\alpha,\beta} ^ {-1} \bigl(-\log \Gamma_{1,1} + \log Z_1^{\ast   \leftarrow}(1)\bigr) = 
    \Lambda_{1,1}   
\end{align*}
by \eqref{eq;limRSM[0,1]}. 

Finally, we notice that the cardinality $
\# I_{1,1;n} $ is tight by 
Proposition \ref{prop;1,1card} $(\rom1)$. Because $a_n$ grows to
infinity, we see that 
$$
  \frac{
  \max_{ t \in I_{1,1;n} \cap nB  }
    \left\{   
     X_t ^{(2)} + \sum_{j > j_{1,1;n} }  V \left(  w_n/\Gamma_j  \right) 1_{\{ t \in I_{j;n} \}}
    \right\}
  }
  {a_n}  \stackrel{P}{\longrightarrow} 0. 
$$
This proves  \eqref{eq;top0.0}.
\end{proof}

\begin{proof}[Proof of Proposition \ref{prop;middle}] 
It is convenient to assume that the random sets $\{I_{1;n},n\in \bbn
\}$  and $\overline{R^\ast_1}$ are defined a probability space
$(\Omega,\mathscr{F},\PP)$, while the remaining random elements are 
defined on another probability
space, $(\Omega_1,\mathscr{F}_1,\PP_1)$, so that the overall
probability space is the product space. Clearly, $\Omega_B$ can be
viewed as an element of $ \mathscr{F}$.

Observe that, for  $\omega\in \Omega_{B}$ the random variables
$(J_{1,i})$ are $i.i.d.$ on  $(\Omega_1,\mathscr{F}_1,\PP_1)$ and each
has a positive $\PP_1$-probability to be in $B$. By Theorem
\ref{thm;4joint}, the $\PP_1$-probability that each $I_{1,i;n} $
intersects $nB$ is bounded away from zero for all large $n$. Therefore,
we can choose $K_1$ so large that with 
$$
A_{B;n} ^{(1)}  = \left\{ 
    I_{1,i;n} \cap nB \neq \emptyset
    \text{ for some }
     1\leqslant i \leqslant K_1 
    \right \}  
$$
we have 
$$
\liminf_{n\to \infty} \, \PP
\big \{ A_{B;n} ^{(1)} \,  \big|  \,  \Omega_{B} \big \}
\geqslant 1 - \epsilon/2.
$$
It is clear that on $ A_{B;n} ^{(1)}$, 
\begin{equation} \label{eq;1n-mid2.2}
    \mathcal{M}_{1;n}(B) \geqslant  V \left( w_n/\Gamma_1 \right) + 
    V \left( w_n/\Gamma_{j_{1,K_1;n}} \right) + O_P(1). 
  \end{equation}
Using once again \eqref{e:pi.var} and arguing as in the proof of
Proposition \ref{prop;top}, if we choose $c_1 >0$ sufficiently large, 
then we can make the probability 
$$
 \PP
\left \{  V \left( w_n/\Gamma_{j_{1,K_1;n}}  \right) \geqslant V(\vartheta_n) - c_1 h \circ V(\vartheta_n)  \right \}
$$
arbitrarily close to 1 as $n \to \infty$. 
Hence for some large $c_1 >0$, there is 
a sequence of subsets of $A_{B;n}^{(2)}\subseteq A_{B;n}^{(1)}$ 
that satisfy  
\begin{align}
& \liminf_{n\to \infty} \PP\{    A^{(2)}_{B;n} \, \vert \,
                \Omega_B   \} \geqslant 1-
                \epsilon,   \label{eq;1n-mid2.3} \\ 
& \liminf_{n\to \infty}
\inf_{\omega \in  A^{(2)}_{B;n} }  
\PP_\omega 
\Big \{  V \left( w_n/\Gamma_{j_{1,K_1;n}}  \right) \geqslant V(\vartheta_n) - c_1 h \circ V(\vartheta_n)  \Big \}
\geqslant 1 - \epsilon, 
\label{eq;1n-mid2.4}
\end{align} 
with $\PP_\omega(\cdot)=\ \PP\{\cdot \, \vert \,
I_{1;n},n\in \bbn ,\overline{R^\ast_1}\}$. 

\medskip
 
Next, for any $\ell$, 
\begin{align*}
    \bigvee_{ i=2^{\ell} } ^{ 2^{\ell+1}-1 }
  & \mathcal{M}_{1,i;n}([0,1]) \leqslant 
    V\left( w_n/\Gamma_1 \right)
    +
    V\left(w_n/\Gamma_{j_{1,2^{\ell};n}}  \right)  \\ 
    & + 
\max \Big \{   
     X_t ^{(2)} + \sum_{j > j_{1,2^{\ell};n} }  V \left(  w_n/\Gamma_j  \right) 1_{\{ t \in I_{j;n} \}} 
     :
      t \in  \cup_{ i=2^{\ell} } ^{ 2^{\ell+1}-1 } \widehat{I}_{1,i;n} 
    \Big \}     
    \\
    & \leqslant_{\rm st}    V \left( w_n/\Gamma_1 \right)
    +
    V \left(w_n/\Gamma_{j_{1,2^{\ell};n}}  \right)  
    + 
    \max \Big \{   
     X^{(0)}_t     :\, 1 \leqslant  t \leqslant  \sum_{ i=2^{\ell} } ^{ 2^{\ell+1}-1 } \# I_{1,i;n}    
    \Big\}, 
\end{align*}
where $\{X_t^{(0)}\}_{t\in \bbz}$ are $i.i.d.$ with $X_0^{(0)} \eid X_0$, which are also
independent of the rest two random variables on the right
hand side. It suffices to derive suitable upper bounds for the last two terms.

Take $A_{B;n} ^{(3)} =\Omega_n$ as defined in  Proposition
\ref{prop;1,1card} $(\rom2)$. We then note that 
\begin{align}
  & \liminf_{n\to \infty} \PP \left\{ A^{(3)}_{B;n} \, \Big| \,
    \Omega_{B} \right \} \geqslant 1-\epsilon \label{eq;1n-mid3.0}  
\end{align}  
and we can choose $c_2=c_2(\epsilon)$ so that 
\begin{equation}
\label{eq;1n-mid3.2}
    \lim_{\ell_0\to \infty}
    \liminf_{n\to \infty}
    \inf_{\omega \in 
    A_{B ;n} ^{(3)} }
    \PP_\omega 
    \left\{
    \frac{ \sum_{i= 2^{\ell} } ^{ 2^{\ell+1}-1 } \# I_{1,i;n} (\omega) }{2^{\ell}}
    \leqslant c_2, \,
     \ell_0 \leqslant \ell \leqslant \ell_n
    \right\} = 1. 
\end{equation}
Write $v_\ell = V(c_2 2^\ell)$.  From the facts 
that $\PP\{ X_0 > x \} \sim \overline{\nu}(x)$  and 
that $V$ is the inverse of $1/\overline{\nu}$, we have for
any positive constant $c_3$,
\begin{align*}
    & \PP \left\{
    \max_{1\leqslant t \leqslant c_2 2^ \ell} X_t^{(0)} 
    >  v_\ell + c_3 \ell h ( v_\ell )
    \right\} 
    \leqslant  c_2 2^\ell \cdot \PP 
    \left\{ 
    X_0 \geqslant 
    v_\ell + c_3 \ell h (v_\ell)
    \right \} \\
    \lesssim & \frac
    {
    \overline{H} \left(
    v_\ell + c_3 \ell h (v_\ell)
    \right)
    }
    {
     \overline{H} \left(
    v_\ell
    \right)
    }  
    =  \exp 
    \left\{
    - \int_0^1 \frac{c_3 \ell h( v_\ell ) \cdot du }{h\left( v_\ell +  c_3 \ell h  ( v_\ell) \cdot u   \right) } 
    \right\}, \quad \text{as } \ell \to \infty.
\end{align*}
By (\ref{eq;LevyMea0.1}), $h$ is eventually increasing, and we use
\eqref{eq;asyV&G0.0}  and \eqref{eq;asyV&G0.1} to verify that for large
$\ell$ the integral in the exponent is at least 
$$
    \frac{ h( v_\ell )  c_3 \ell }{h\left( v_\ell +  c_3 \ell h ( v_\ell)   \right) }
    \sim c_3\left(\frac{  \alpha \log 2}{\alpha \log 2 +
        c_3}\right)^{1-\alpha}\ell. 
$$
It follows that  
\begin{equation} \label{eq;1n-mid3.4}
\lim_{\ell_0 \to \infty}
\limsup_{n \to \infty}
\sum_{\ell=\ell_0} ^{\ell_n}
    \PP \left\{
    \max_{1\leqslant t \leqslant c_2 2^ \ell} X_t^{(0)} 
    > v_\ell + c_3 \ell h (v_\ell)
    \right\} = 0 .
  \end{equation}

Recalling the Skorohod embedding of the convergence in Theorem
\ref{thm;4joint}, we see that we can choose an event $A^{(4)}_{B} $
and some $c_4=c_4(\epsilon)>0$ such that
\begin{align}
 &  \PP\left\{  A^{(4)}_{B} \, \Big| \, \Omega_{B} \right \} \geqslant 1 -\epsilon, 
 \  \ \sup_{n\geqslant 1} \,  \sup_{ \omega \in A^{(4)}_{B}}
 \frac{w_n}{\vartheta_n} \overline{p}_{1;n}(\omega) \leqslant c_4 .
\label{eq;1n-mid4.1}
\end{align}
The upper bound on $ \overline{p}_{1;n}$ in \eqref{eq;1n-mid4.1}
guarantees the uniform convergence 
$$
\sup_{\omega \in  A^{(4)}_{B}}
\sup_{\lambda  \leq 1/2}
 \left| \EE_\omega e^{\lambda j_{1,1;n} \overline{p}_{1;n}} - \EE e^{\lambda \Gamma_{1,1}}  \right| \to 0,
$$
as in the argument for \eqref{e:Gammas}. 
Since under $\PP_\omega$ the product $j_{1,2^\ell;n} \cdot \overline{p}_{1;n}-1$
is the sum of  $2^\ell$ independent copies of  
$j_{1,1;n} \cdot \overline{p}_{1;n}-1$, the exponential Markov
inequality tells us that 
\begin{equation}\label{eq;1n-mid4.3}
\lim_{\ell_0 \to \infty}
\limsup_{n\to \infty}
\sup_{\omega \in  A^{(4)}_{B}}
\sum_{\ell=\ell_0} ^{\ell_n}
\PP_\omega \left\{
j_{1,2^\ell;n} \cdot \overline{p}_{1;n} \leqslant 2^{\ell -1}
\right\}
= 0.
\end{equation}
Combining (\ref{eq;1n-mid4.1}) with (\ref{eq;1n-mid4.3}) gives us 
\begin{equation} \label{eq;1n-mid4.4}
\lim_{\ell_0 \to \infty}
\limsup_{n\to \infty}
\sup_{\omega \in  A^{(4)}_{B;n}}
\sum_{\ell=\ell_0} ^{\ell_n}
\PP_\omega \left\{
w_n/j_{1,2^\ell;n} \geqslant 
c_4 \vartheta_n/2^{\ell -1}
\right\}
= 0
\end{equation}
and, therefore,  
\begin{equation}\label{eq;1n-mid4.5}
\lim_{\ell_0 \to \infty}
\limsup_{n\to \infty}
\inf_{\omega \in  A^{(4)}_{B}}
\PP_\omega \left\{
 V \left( w_n/\Gamma_{1,2^\ell;n}  \right) \leqslant  V \left( c_4 \vartheta_n/2^\ell  \right)
, \,
\ell_0 \leqslant \ell \leqslant \ell_n
\right\}
=  1 .
\end{equation}

Set 
$$
 A_{B;n} =  A^{(2)}_{B;n} \cap A^{(3)}_{B;n}  \cap A^{(4)}_{B}, 
 \quad n\in \bbn. 
 $$
so that 
\begin{align}
    & \liminf_{n \to \infty} \PP \left\{
    A_{B;n } \, \big| \, \Omega_{B}
  \right \} \geqslant  1- 3\epsilon, \label{eq;1n-mid1.0}.
\end{align}

Using the constants defined above, we set for  $\ell= \ell_0
,\ldots,\ell_n$ 
\begin{align*}
& T_{1,\ell}: = V(\vartheta_n) -  V \left(  c_4 \vartheta_n 2^{-\ell}
                 \right) ,  \\
& T_{2,\ell}:=c_1 h \circ V(\vartheta_n) + v_\ell + c_3 \ell h (
                                 v_\ell). 
\end{align*} 
It is elementary to check that 
\begin{align}
    &  \lim_{\ell_0 \to \infty}
    \limsup_{n \to \infty}
    \inf_{\omega \in A_{B;n} }
    \PP_{\omega}
    \left\{ \mathcal{M}_{1,n}(B) > \bigvee_{i=2^{\ell_0}} ^{2^{\ell_n} - 1} 
    \mathcal{M}_{1,i;n} ([0,1])
    \right \}  
      \label{eq;1n-mid1.1} \\ 
  \geq& - \epsilon +  \lim_{\ell_0 \to \infty}\limsup_{n \to \infty} \one \Bigl( T_{1,\ell} >
        T_{2,\ell} \ \ \text{for all $\ell= \ell_0
,\ldots,\ell_n$}\Bigr). \notag 
\end{align}
However, by \eqref{eq;asyV&G0.2}, \eqref{eq;Gprime},
\eqref{eq;asyV&G0.1}, once we take 
$0<\rho<\beta$, we see that, uniformly in $\ell$,  
\begin{align*}
  T_{1,\ell} &= 
    G(\vartheta_n )
    - G \left(
    c_4 \vartheta_n 2^{-\ell} \right) - o\left( h\circ G(\vartheta_n ) \right)    \\
    & =   \int ^ 1  _{ c_4 2^{- \ell } }
    \frac{h\circ G(\vartheta_n u)}{u} du    - o\left( h\circ G(\vartheta_n ) \right) 
              \gtrsim \ell (\log n)^{1/\alpha -1} \mathscr{L}(\log n).
\end{align*}               
On the other hand, by \eqref{eq;asyV&G0.2} and \eqref{eq;asyV&G0.1},
uniformly in $\ell$, 
\begin{align*}
 T_{2,\ell}  & \lesssim \left( \ell^{1/\alpha} + (\log n)^{1/\alpha -1}  \right) \mathscr{L}(\log n) .
\end{align*}
As long as $\rho$ is small enough, we see that the indicator function
in the right hand side of \eqref{eq;1n-mid1.1} is equal to 1 for all
$n$ large enough, and so
\begin{equation} \label{e:lowr.bond.Pw}
   \lim_{\ell_0 \to \infty}
    \limsup_{n \to \infty}
    \inf_{\omega \in A_{B;n} }
    \PP_{\omega}
    \left\{ \mathcal{M}_{1,n}(B) > \bigvee_{i=2^{\ell_0}} ^{2^{\ell_n} - 1} 
    \mathcal{M}_{1,i;n} ([0,1])
    \right \}  \geqslant 1-\epsilon.
\end{equation} 

It follows from \eqref{e:lowr.bond.Pw} and \eqref{eq;1n-mid1.0}  that
$$
 \lim_{\ell_0 \to \infty}
     \liminf_{n \to \infty}
     \PP \left\{
     \mathcal{M}_{1;n} (B)  > \bigvee_{i=2^{\ell_0} }^{2^{\ell_n} - 1} 
     \mathcal{M}_{1,i;n} ([0,1])
     \, \bigg| \,
     \Omega_B
   \right\} \geq 1-4\epsilon.
   $$
   Letting $\epsilon\to 0$ establishes \eqref{eq;1n-mid0.0}.

   \medskip

We proceed now to prove \eqref{eq;n-mid0.0}. Since $\lim_{n\to \infty}
\PP \{ I_{1;n} \cap nB \neq \emptyset \} = \PP \{ \overline{R^\ast_1}
\cap B \neq \emptyset \} >0 $, it follows that 
 $$
    \lim_{K\to \infty} \lim_{n\to \infty} 
    \PP \left\{  I_{k;n} \cap n B \neq \emptyset 
    \text{ for some }  1\leqslant k \leqslant K \right \} = 1,
  $$
Therefore, repeating the argument used to find a lower bound on
$\mathcal{M}_{1;n}(B)$ in the proof of \eqref{eq;1n-mid0.0} shows that
for any $\epsilon>0$ we have, outside of an event of probability
$\epsilon$, 
 \begin{equation} \label{eq;n-mid1.1}
    \mathcal{M}_n(B) 
    \geqslant  V (w_n/\Gamma_1) + V (\vartheta_n) - O_P (h\circ
    V(\vartheta_n)). 
  \end{equation}

Next, continuing to use the notation of  the proof of \eqref{eq;1n-mid0.0}, 
$$
\mathcal{M}_{k;n} ([0,1])
    \leqslant_{\rm st}
    V\left( w_n/\Gamma_k\right)
     + 
     \max\left\{ X^{(0)}_t : t \in I_{k;n}   \right \}, \quad k \in
     \bbn, 
$$
with the process $\{X^{(0)}_t \}$ depending on $k$, even though our
notation does not show it. For a large constant $c_5>0$, let  $v_{\ell;n} =
V\left(c_5 2^\ell \big/ \overline{F} (n)\right)$, where $F$
is the law of the first hitting time $\varphi$ in
\eqref{eq;MCmemo0.0}. For a small positive constant $c_6$, we thus have  
\begin{align*}
    & \PP \bigg \{  \bigcup_{\ell=\ell_0} ^ {\ell_n} \bigg \{ \bigvee_{k=2^{\ell}} ^ {2^{\ell+1}-1}
    \mathcal{M}_{k;n} ([0,1])  \geqslant  
    V \left(  w_n/2^{\ell - 1}  \right) +v_{\ell;n} + c_6 \ell h  (v_{\ell;n})
    \bigg \}  \bigg \} \\
    \leqslant & 
    \sum_{\ell = \ell_0} ^ {\ell_n }
    \PP \left\{  \Gamma_{2^\ell} < 2^{\ell - 1} \right \} 
    + 
    \sum_{\ell = \ell_0} ^ {\ell_n }
    \PP \left\{  
       2^{-\ell} \sum_{k=2^\ell}^{2^{\ell+1}-1}
    \# I_{k;n}
     > c_5 /\overline{F}(n)     \right \} \\
    & + 
    \sum_{\ell = \ell_0} ^ {\ell_n }
  \PP \Big\{  
   \max_{1\leqslant  t  \leqslant  c_5 2^\ell  \big/ \overline{F}(n)}  X^{(0)}_t
     \geqslant v_{\ell;n} + c_6 \ell h(v_{\ell;n})
    \Big \}   
    =: S_{1.n}+S_{2,n}+S_{3,n}. 
\end{align*}
We emphasize that the process $\{  X^{(0)}_t\}$ in $S_{3,n}$ is a
concatenation of 3 different processes with the same marginal
distribution. Only the marginal distribution is relevant in the
subsequent calculation.

An exponential Markov inequality immediately shows that 
\begin{equation} \label{eq;n-mid2.0}
    \lim_{\ell_0 \to \infty} 
    \lim_{n\to \infty}  S_{1.n}      = 0 .
  \end{equation}

  Next,  in the notation of \eqref{e:mu.p},
  choosing $c_5\geq 2\mu_1$ we have by the Chebyshev
inequality 
\begin{align*}
&\PP \left\{  
       2^{-\ell} \sum_{k=2^\ell}^{2^{\ell+1}-1}
    \# I_{k;n}
  > c_5  /\overline{F}(n)     \right \} \\
\leq &\PP \left\{  
       2^{-\ell} \sum_{k=2^\ell}^{2^{\ell+1}-1}
   \Bigl( \# I_{k;n}\overline{F}(n)  - \EE\bigl( \#
       I_{1;n}\overline{F}(n)  \bigr) \Bigr)  > c_5/2\right \}
  \\
 \leq & c2^{-\ell}{\rm Var} \bigl( \# I_{1;n}\overline{F}(n) 
        \bigr)
      \leq    c2^{-\ell}\mu_2,
\end{align*}
which implies that 
\begin{equation} \label{eq;n-mid3.1}
    \lim_{\ell_0 \to \infty} 
    \limsup_{n\to \infty}
    S_{2,n}  = 0 .
  \end{equation}

  Finally, the statement 
\begin{equation} \label{eq;n-mid4.1}
    \lim_{\ell_0 \to \infty} 
    \limsup_{n\to \infty}
    S_{3,n}  = 0 .
  \end{equation}
follows the same way as in the proof of \eqref{eq;1n-mid3.4}. By  (\ref{eq;n-mid2.0}), (\ref{eq;n-mid3.1}) and
(\ref{eq;n-mid4.1}), we note that
$$
\lim_{\ell_0 \to \infty}  \limsup_{n\to \infty}
    \PP \bigg \{  \bigcap_{\ell=\ell_0} ^ {\ell_n} \bigg \{ \bigvee_{k=2^{\ell}} ^ {2^{\ell+1}-1}
   \mathcal{M}_{k;n} ([0,1])  \leqslant  
    V \left(  w_n/2^{\ell - 1}  \right) +v_{\ell;n} + c_6 \ell h  (v_{\ell;n})
    \bigg \}  \bigg \} = 1.
    $$
    
For $\ell_0 \leqslant \ell \leqslant \ell_n$ let 
\begin{align*}
&T_{1,\ell} := V(w_n/\Gamma_1) - V (w_n 2^{-\ell+1}),    \\
&T _{2, \ell} := v_{\ell;n} + 2c_7\ell h(v_{\ell;n}) - V(\vartheta_n),    
\end{align*} 
so that for any $\epsilon>0$ 
\begin{align*}
   \PP \left\{
     \mathcal{M}_{n} (B)  > \bigvee_{k=2^{\ell_0} }^{2^{\ell_n} - 1}
  \mathcal{M}_{k;n} ([0,1])  \right\} \geqslant
\one \bigl (T_{1,\ell}>T _{2, \ell} \ \text{for all $\ell=\ell_0,\ldots,
  \ell_n$}\bigr) -\epsilon
\end{align*}
for all large $n$. 
As in the proof of \eqref{eq;1n-mid0.0}, for any
  $\epsilon>0$, on an event of probability converging to 1, 
the following two inequalities holds uniformly in $\ell$.
\begin{align*}
T_{1,\ell} 
\geqslant &   
\frac{\log 2 (1-\beta -\rho \log 2) ^ {1/\alpha - 1}}{\alpha}
\ell(\log n) ^ {1/\alpha - 1} 
\mathscr{L}(\log n)  , \\
T _{2, \ell}  \leqslant &  
 \frac{( 1 + \epsilon) (\log 2 + c_7)\big( (\beta + \rho \log 2) \big)
                          ^ {1/\alpha - 1}}{\alpha}  \ell(\log n) ^
                          {1/\alpha - 1} \mathscr{L}(\log n). 
\end{align*} 
If $c_7$, $\rho$ and $\epsilon$ are chosen to be small enough,  we
have $T_{1,\ell}>T _{2, \ell} $ uniformly in $\ell$ for large $n$. Thus 
$$
\lim_{\ell_0 \to \infty}
     \liminf_{n \to \infty}
     \PP \left\{
     \mathcal{M}_{n} (B)  > \bigvee_{k=2^{\ell_0} }^{2^{\ell_n} - 1}
     \mathcal{M}_{k;n} ([0,1])
      \right\} \geq 1-\epsilon,
     $$
     and \eqref{eq;n-mid0.0} follows by letting $\epsilon\to 0$. 
\end{proof}

\begin{proof}[Proof of Proposition \ref{prop;bottom}]  
  Due to the lower bound (\ref{eq;n-mid1.1}),  the claim
  \eqref{eq;n-btm0.0} will follow once we show that 
\begin{equation}  \label{eq;n-btm1.0}
    \lim_{n\to \infty} \PP \bigg\{ 
    V(w_n/\Gamma_1) + V(\vartheta_n) - o(V(w_n)) >
    \bigvee_{\Gamma_k > n^\rho} \mathcal{M}_{k;n} ([0,1])
    \bigg\} = 1.
\end{equation}

It suffices to consider the case $\rho<r_1$ as defined in
Lemma \ref{lem;psi} $(\rom1)$. Removing an event of probability
$\epsilon$ 
ensures that $\Gamma_1$ is bounded from above by a constant.
Modifying, if
necessary, $o(V(w_n))$ shows that \eqref{eq;n-btm1.0} will follow once
we check that
$$
\lim_{n\to \infty} \PP \bigg\{ 
    V(w_n) + V(\vartheta_n) - o(V(w_n)) >
    \bigvee_{\Gamma_k > n^\rho} \mathcal{M}_{k;n} ([0,1])
    \bigg\} = 1.
    $$

Furthermore, since  the support of the marginal L\'evy measure of the
process  
$X_0 ^{(2)}$  is bounded on the right, the process has marginal
distributional tails lighter than exponentially light;   see Section
26 in \cite{sato:2013:cam68}. It follows that
$$
  \max_{0\leqslant t \leqslant n} X^{(2)}_t  =o_P( \log n)  = o_P (V(w_n))   
  $$
  as $n\to\infty$. Therefore, it is enough to prove that
\begin{align} \label{e:another.expr}
\lim_{n\to \infty} \PP \Bigg\{ \bigvee_{\Gamma_k>n^\rho}&  \max_{t \in
     I_{k;n}}  \sum_{ j\geq k} 
 V\left( w_n/\Gamma_j \right) 1_{\{  t \in I_{j;n} \}}
  <V(w_n) + V(\vartheta_n) - o(V(w_n))   \Biggr\} =1. 
\end{align}
We prove \eqref{e:another.expr} through a series of steps.

Let $\tilde \psi$ be the function defined in Lemma \ref{lem;psi}. 
We start by proving that for any $0<r<1-\beta$ and 
any $\epsilon>0$, 
\begin{align}   \label{e:step.1}
   \PP \Bigg\{  &  
  \sum_{\Gamma_j > n^r} 
 V\left( w_n/\Gamma_j \right) 1_{\{  0 \in I_{j;n} \}}
   + V\left( w_n/n^r \right) \\
  >&V(w_n) + V(\vartheta_n) - o(V(w_n))   \Biggr\}
     \leqslant \exp\left\{ (-\tilde\psi(r)+\epsilon)\log n \right\}
 \notag 
 \end{align}
for all large $n$. 

For this purpose denote 
\begin{align*}
  &z_n = z_n (r) = V(w_n) + V(\vartheta_n)  - V(w_n / n^r) - o (V(w_n)  ) ,\\
  &\overline{z}_n = \overline{z}_n (r) =  V(w_n / n^r).   
\end{align*}
Recalling the partition $0=r_0<r_2<\cdots $ of the interval 
 $(0,1-\beta)$ defined in Lemma \ref{lem;psi}, we see that $r\in
 (r_m.r_{m+1}]$ for some $m=0,1,2\ldots$, so by the same Lemma
 \ref{lem;psi},  for large $n$, the probability in \eqref{e:step.1} is
\begin{align} 
& \PP \Bigg\{  
  \sum_{\Gamma_j > n^r} 
 V\left( w_n/\Gamma_j \right) 1_{\{  0 \in I_{j;n} \}}>z_n (r)\Biggr\}  \notag \\
  \leqslant &  \overline{z}_n(r) ^ {\gamma_m} \big( \overline{H}(\overline{z}_n(r))
        \big) ^ m   \overline{H}( z_n(r) -m \overline{z}_n(r)).  \label{e:step1.1}
\end{align}
 It follows from $(\ref{eq;asyV&G0.0})$ that 
\begin{equation} \label{eq;revised;n-btm2.1.2}
\overline{z}_n(r) ^ {\gamma_m}  \lesssim  \left( (\log n) ^ {1/\alpha }
  \mathscr{L}(\log n) \right ) ^ {\gamma_m}  .
\end{equation}
Next, by Karamata's theorem, \eqref{eq;asyV&G0.0} and
\eqref{eq;asyV&G0.1} 
\begin{align} 
   \int_1 ^ {  \overline{z}_n(r) }  \frac{du}{u^{1-\alpha} L_\alpha(x) }  
    \sim &  \frac{ \overline{z}_n(r) }{ \alpha h ( \overline{z}_n(r) )   } \notag \\
     =  &  \frac{1}{\alpha}  \cdot \frac{ V(w_n / n^r ) }{ h \circ V(w_n
   / n^r )  } \sim \log(w_n/n^r) \sim (1-\beta - r) \log n. \label{eq;revised;n-btm2.2.2}
\end{align}
Let
$$
\theta= \big(   ( 1-\beta )^{1/\alpha} + \beta ^{ 1/\alpha }
- (m+1) (1-\beta -r)^{1/\alpha}   \big)^\alpha; 
$$
due to the range of $r$ this is a well defined power of a positive
number. It follows by $(\ref{eq;asyV&G0.0})$ that 
\begin{align*}
    & z_n - m \overline{z}_n     \sim  V( n^\theta ), 
\end{align*}
Therefore, by \eqref{eq;asyV&G0.0} and \eqref{eq;asyV&G0.1}, 
\begin{align} \label{eq;revised;n-btm2.3.0}
  &  \int_1 ^ { z_n - m \overline{z}_n  }  \frac{du}{u^{1-\alpha}
    L_\alpha (u)}  
  \sim  \frac{ z_n - m \overline{z}_n }{ \alpha h  (z_n - m
    \overline{z}_n) } 
  \sim \frac{V(n^\theta)}{\alpha h\circ V(n^\theta)} 
     \sim  \theta \log n .
\end{align}
Putting \eqref{e:step1.1}, \eqref{eq;revised;n-btm2.1.2},
\eqref{eq;revised;n-btm2.2.2} and \eqref{eq;revised;n-btm2.3.0}
together establishes \eqref{e:step.1}. 

Next, we prove that \eqref{e:another.expr} holds if $\rho\in
(r_m,r_{m+1}]$ for any sufficiently large $m$. Indeed, by
\eqref{e:step.1}, 
 \begin{align*}
   &   \PP \Bigg\{ \bigvee_{\Gamma_k>n^\rho}  \max_{t \in
     I_{k;n}}  \sum_{j\geq k} 
 V\left( w_n/\Gamma_j \right) 1_{\{  t \in I_{j;n} \}}
  \geqslant V(w_n) + V(\vartheta_n) - o(V(w_n))   \Biggr\} \\
 \leqslant & n \PP \Bigg \{ \sum_{\Gamma_j > n^r} V\left(
             \frac{w_n}{\Gamma_j} \right) 1_{ \{ 0 \in I_{j;n} \}  } + V (w_n / n^r )
             \geqslant 
 V(w_n) + V(\vartheta_n) - o ( V(w_n))  \Biggr\}  \\
  \leqslant & n \exp\left\{     \big( - \widetilde{\psi}(r) + \epsilon  \big) \log n   \right \}
 \end{align*}
for large $n$. Since $\widetilde{\psi}(r)\to\infty$ as $r\to 1-\beta$,
the claim follows. 

Given that \eqref{e:another.expr} holds if $\rho\in
(r_m,r_{m+1}]$ for any sufficiently large $m$, the fact that 
\eqref{e:another.expr} also holds for any $0<\rho<r_1$ will follow
once we prove that for any such $\rho$ and any $m$, 
\begin{align*} 
\lim_{n\to \infty} \PP \Bigg\{ \bigvee_{n^\rho<\Gamma_k\leqslant
  n^{r_m}}&  \max_{t \in 
     I_{k;n}}  \sum_{j\geq k} 
 V\left( w_n/\Gamma_j \right) 1_{\{  t \in I_{j;n} \}} \\
 & \hskip 1in <V(w_n) + V(\vartheta_n) - o(V(w_n))   \Biggr\} =1.  
\end{align*}

We will prove the above statement by constructing two increasing sequences, 
$$
\{ i_s \}_{ 0 \leqslant s \leqslant m } \subseteq \bbn 
\quad \text{and} \quad 
\{ \rho_ {i_s} \}_{ 0 \leqslant s \leqslant m } \subseteq \reals_+,
$$

with $i_0:=0$, $\rho_0 := \rho$, 
   $\rho_{i_s} \in (r_s, r_{s+1})$ for $s=1,\ldots, m$ such that for
   every such $s$, 
     \begin{align} \label{eq;revised;n-btm4.1.0}
  \lim_{n\to \infty} \PP \Bigg\{ \bigvee_{n^{\rho_{i_{s-1}}}<\Gamma_k\leqslant
  n^{\rho_{i_s}}}&  \max_{t \in 
     I_{k;n}}  \sum_{j\geq k} 
 V\left( w_n/\Gamma_j \right) 1_{\{  t \in I_{j;n} \}} \\
 & \hskip 1in \geqslant V(w_n) + V(\vartheta_n) - o(V(w_n))   \Biggr\}
   =0. \notag   
\end{align}
We will describe the construction in the case $m=1$. The case of a
general $m$ is similar.

Let $\delta_0  = \widetilde{\psi} (\rho_0) - (  \rho_0 + \beta ) >
0$, by  Lemma \ref{lem;psi} $(\rom2)$. So 
    $  \widetilde{\psi} (\rho_0) >  \rho_0 + \beta + 2\delta_0/3$, and so
    using  \eqref{e:step.1} and the notation that follows it,  
\begin{equation}\label{eq;revised;n-btm4.2.2}
\PP\biggl\{
\sum_{\Gamma_j > n^{\rho_0}} V \left(  w_n/\Gamma_j \right) 1_{ \{  0 \in I_{j;n} \} }
\geqslant z_n (\rho_0)
\biggr\} = o \left(  n^ { -\rho_0 - \beta - 2\delta_0 / 3 } \right). 
\end{equation}
 Take
$\rho_1  = \rho_0 +  \delta_0/3$, then for any $c>0$, 
\begin{align*}
  &\PP \Bigg\{ \bigvee_{n^{\rho_0}<\Gamma_k\leqslant
  n^{\rho_1}}  \max_{t \in 
     I_{k;n}}  \sum_{j\geq k} 
 V\left( w_n/\Gamma_j \right) 1_{\{  t \in I_{j;n} \}}  
  \geqslant V(w_n) + V(\vartheta_n) - o(V(w_n))   \Biggr\} \\
  \leqslant & \PP \big \{ \#\{ k:  n^{\rho_0} < \Gamma_k \leqslant
              n^{\rho_1}  \} > 2 n^{\rho_1} \big \}   \\
+ & \PP \Big \{ \# I_{k;n} \geqslant \frac{c n^\beta \log n}{L(n)} \ 
    \text{ for some $k$ with} \ n^{\rho_0} < \Gamma_k \leqslant n^{\rho_1}  \Big \} \\
 + &2n^{\rho_1}  \frac{cn^\beta \log n}{L(n)} \cdot \PP \bigg \{
     \sum_{\Gamma_j > n^ {\rho_0 } }   V\left(  w_n/\Gamma_j \right) 1_{\{  0 \in I_{j;n} \}}
    > z_n (\rho_0)
    \bigg \}. 
\end{align*}
The first term in the right hand side vanishes in the limit by the law
of large numbers. If $c$ is large enough, by  Proposition
\ref{prop;ALLcard} so does the second term.  The same is true for the third
  term by 
  $(\ref{eq;revised;n-btm4.2.2})$. Notice that, if $\rho_1 > r_1$, we set $i_1=1$ and replace
  $\rho_1$ with $\min(\rho_1,r_2)$ to 
  complete the construction of the two sequences. 
Otherwise,  we inductively define, for as long as $\rho_i\leqslant
r_1$, 
\begin{equation} \label{eq;revised;n-btm4.3.2}
 \delta_{i}=  \widetilde{\psi} (\rho_i) - (\rho_i+\beta),   
 \quad 
 \rho_{i+1} = \rho_i + \frac{i+1}{i+3} \delta_{i+1}, \quad i \geqslant 1.
\end{equation}
The same argument as above then  shows that for every $i$, 
$$
\PP \Bigg\{ \bigvee_{n^{\rho_i}<\Gamma_k\leqslant
  n^{\rho_{i+1}}}  \max_{t \in 
     I_{k;n}}  \sum_{j\geq k} 
 V\left( w_n/\Gamma_j \right) 1_{\{  t \in I_{j;n} \}}  
 \geqslant V(w_n) + V(\vartheta_n) - o(V(w_n))   \Biggr\}\to 0.
 $$
We claim that this inductive procedure must end after finitely many
steps. That is, $\rho_{i_1} > r_1$ for some $i_1 \in \bbn$, in which case 
we replace $\rho_{i_1}$ with $\min(\rho_{i_1},r_2)$
and, once again, complete the construction. 

Indeed, if the inductive procedure did not terminate, 
it would follow that $\delta_i \to 0$, so $\rho_i\uparrow
\rho_*\leqslant r_1$ satisfying
$\widetilde{\psi}(\rho_*-)=\rho_*+\beta$. Since $\widetilde{\psi}$ is
constant on $(0,r_1)$, this is easily seen to contradict the property
$r_1<1/2-\beta$ established in Lemma  \ref{lem;psi} $(\rom1)$. 

This completes the proof of \eqref{eq;n-btm0.0}.

We proceed to
prove \eqref{eq;1-btm0.0}. 
Let $\epsilon>0$. For $c,C>0$ we set  
$\Omega_n = \Omega_n^{(1)} \cap \Omega_n^{(2)} $,
where 
\begin{align*}
   & \Omega_n^{(1)}= \Big\{ \omega\in \Omega:  \min\bigl( \Gamma_{ j_{1,k;n}  }(\omega) ,  j_{1,k;n}(\omega)\bigr) \geqslant  c w_n k/\vartheta_n  
 \ \text{ for all }  k \in \bbn   \Big\}, \quad n\in \bbn;   \\ 
   & \Omega_n^{(2)} = \Big\{
\# I_{1,k;n} (\omega) < C \log n  \text{ for all } 1\leqslant  k \leqslant \vartheta_n / c
\Big\}, \quad n \in \bbn. 
\end{align*}
If $\delta>0$ is such that $V$ vanishes on $(0,\delta]$, then 
on $\Omega_n^{(1)}$ we have $$\#\{ k : \Gamma_{ j_{1,k;n}
} \leqslant w_n/\delta \} \leqslant \vartheta_n / (c\delta).$$
 We claim that we can choose $c=c(\epsilon)$ small enough and $C = C(\epsilon)$ large enough so that 
\begin{equation}\label{eq;1-btm2.3} 
\liminf_{n\to \infty} \PP \big\{ \Omega_n^{(i)}  \big\} \geqslant 1 - \epsilon, \quad
i=1,2.
\end{equation}
The fact that this is true for $i=1$ if $c$ is small enough follows as
in (\ref{eq;1n-mid4.4}) and the law of large numbers, for $i=2$ it
follows for large enough $C$ by 
Proposition \ref{prop;1,1card} $(\rom2)$.

With $c$  chosen above we denote 
 \begin{align*} 
   & \overline{ \mathcal{M} }_{1,k;n} = \max_{t \in I_{1,k;n}} \bigg\{
    V\left(w_n/\Gamma_{j_{1,k;n}} \right) + \sum_{j \geqslant j_{1,k;n} + 1}
    V\left(  w_n/\Gamma_j \right) 1_{\{ t \in I_{j;n} \}} \bigg\},
     \quad k\in \bbn; \\
   &    \mathcal{M}_{1,k;n}^\prime = \max_{t \in I_{1,k;n}} \bigg\{
    V\left(
     \vartheta_n/(c k)\right) + 
    \sum_{\Gamma_j >   c k w_n / \vartheta_n }
    V \left(
    w_n/\Gamma_j  \right) 1_{\{ t \in I_{j;n} \} }\bigg\}   , \quad 
     k\in \bbn, 
\end{align*}
so that on each event $\Omega_n$, we have
$\overline{\mathcal{M}}_{1,k;n} \leqslant \mathcal{M}_{1,k;n}^\prime$,
$   k \geqslant 1$. 
Since $c_\infty<1$, it follows from \eqref{e:G.11n} that on the event
$\Omega_B$, 
$$
V\left(  w_n/\Gamma_{j_{1,1;n}} \right) \geqslant 
V(\vartheta_n) - o_P(V(\vartheta_n)).
$$
Since $\epsilon>0$ can be taken as small as we wish,
(\ref{eq;1-btm0.0}) will follow from the following claim:  
\begin{equation}\label{eq;1-btm3.1}
    \lim_{n\to \infty} \mathbb{P} \bigg \{ \max_{\lfloor n^\rho
      \rfloor \leq k\leq \lceil \vartheta_n / (c\delta) \rceil}
    \mathcal{M}_{1,k;n}^\prime <
V(\vartheta_n) - o(V(\vartheta_n)) \, \bigg| \, \Omega_n
\bigg \}  = 1 .
\end{equation}

We start by constructing an increasing sequence $\rho_i \uparrow
\beta, i \in \bbn_0$ with $\rho_0=\rho$ such that for every $i$
\begin{equation} \label{eq;revised;1-btm2.3.0}
     \mathbb{P} \left\{
    \max_{\lfloor n^{\rho_i} \rfloor \leqslant k\leqslant  \lceil n ^ {\rho_{i+1}} \rceil }
    \mathcal{M} ^ \prime _{1,k;n}  
    \geqslant V(\vartheta_n ) - o(V(\vartheta_n ) )   \, \bigg| \, \Omega_n
    \right \}  \to 0 , \quad n\to \infty. 
\end{equation}
To this end,  we define inductively for $i=0,1,\ldots$ 
\begin{equation} \label{eq;revised;1-btm2.3.1}
  \delta_i =  \left(  \beta ^ { 1/\alpha }- (\beta - \rho_i)^
    { 1/\alpha } \right) ^ \alpha
    - \rho_i, \quad 
  \rho_{i+1}  = \rho_i + \frac{i+1}{i+3} \delta_i ;
\end{equation}
note that $\delta_i>0$ for all $i$. We have 
\begin{align*}
  &   \PP \left \{   V\left(
    \vartheta_n/(c n^{\rho_i})  \right) + 
    \sum_{\Gamma_j >   c n^{\rho_i} w_n / \vartheta_n }
    V \left(
    w_n/\Gamma_j  \right) 1_{\{ 0 \in I_{j;n} \} }  \geqslant  V(\vartheta_n) - o(V(\vartheta_n))  \right  \} \\
    \leqslant & 
    \PP \left \{ 
    \sum_{j=0 } ^ \infty
    V \left(
    w_n/\Gamma_j  \right) 1_{\{ 0 \in I_{j;n} \} }  \geqslant  V(\vartheta_n) - o(V(\vartheta_n)) 
    -   V\left(
    \vartheta_n/(c n^{\rho_i})
    \right) 
    \right  \} \\
    \lesssim & \overline{H}  \Bigl(
    V(\vartheta_n) - V \left(  \vartheta_n/(cn^{\rho_i}) \right) -
               o\big( V(\vartheta_n) \big)       \Bigr), 
\end{align*}
and by \eqref{eq;asyV&G0.0},
$$
\log\Bigl[ \overline{H}  \Bigl(
    V(\vartheta_n) - V \left(  \vartheta_n/(cn^{\rho_i}) \right) -
    o\big( V(\vartheta_n) \big)       \Bigr)\Bigr]
 \sim - \left(  \beta ^ { 1/\alpha }- (\beta - \rho_i)^
   { 1/\alpha } \right) ^ \alpha \log n.
 $$
 Therefore, by the first part of \eqref{eq;revised;1-btm2.3.1}, for
 large $n$ we have 
 $$
 \overline{H}  \Bigl(
    V(\vartheta_n) - V \left(  \vartheta_n/(cn^{\rho_i}) \right) -
    o\big( V(\vartheta_n) \big)       \Bigr)
    \leqslant \exp \left\{ - \left(\rho_i +  \frac{i+2}{i+3}\delta_i
      \right) \log n  \right \} 
$$
and, hence, since $\Omega_n\supseteq\Omega_n^{(2)}$, 
\begin{align*}
    &\mathbb{P} \left\{
    \max_{\lfloor n^{\rho_i} \rfloor \leqslant k\leqslant  \lceil n ^ {\rho_{i+1}} \rceil }
    \mathcal{M} ^ \prime _{1,k;n}  
    \geqslant V(\vartheta_n ) - o(V(\vartheta_n ) )   \, \bigg| \, \Omega_n
    \right \} 
      \\
    \lesssim &  n^{\rho_{i+1}} \cdot C \log n  \cdot  \exp \left\{ - \left(\rho_i +  \frac{i+2}{i+3}\delta_i \right) \log n 
    \right \} \\
    = &  C \log n \cdot  \exp\left\{ - \frac{\delta_i}{i+3} \log n \right\}
\to 0 
\end{align*}
as  $n \to \infty$.
Therefore, (\ref{eq;revised;1-btm2.3.0}) follows, and
\eqref{eq;1-btm3.1} will be established once we show that for all $i$
large enough,  
\begin{equation} \label{eq;revised;1-btm2.4.0}
\mathbb{P} \left \{
    \max_{\lfloor n^{\rho_i} \rfloor \leqslant k\leqslant \lceil \vartheta_n / c \rceil }
   \mathcal{M} ^ \prime _{1,k;n}  
    \geqslant  V(\vartheta_n ) - o(V(\vartheta_n ) )   \, \bigg| \, \Omega_n
    \right \} \to 0, \quad n \to \infty. 
\end{equation}
Note that
$$
\sum_{\Gamma_j >   c n^{\rho_i} w_n / \vartheta_n }
    V \left(
    w_n/\Gamma_j  \right) 1_{\{ 0 \in I_{j;n} \} }
\eid  \sum_{j=1}^{N_n} \overline{\xi_j},    
$$
where $N_n$ is a Poisson random variable with mean 
$\overline{\nu}(x_0)-cn^{\rho_i}/\vartheta_n$, and 
$\{ \overline{\xi}_i \}_{i\geqslant 1}$ is a family of $i.i.d.$ random
variables independent of $N_n$, and the law of $\overline{\xi}_1$  is
the restriction of $\nu$  to $\bigl(1, V( \vartheta_n c^{-1} n^{-\rho_i})
\bigr)  $, normalized to be a probability measure. Here we have used
the fact that we assume $x_0=1$ in \eqref{eq;x0}.

Let 
$$
m = m(i) = \left \lceil   \frac{  V (\vartheta_n ) - o(V (\vartheta_n
    )) -  V( \vartheta_n c^{-1} n^{-\rho_i})} { V( \vartheta_n c^{-1}
    n^{-\rho_i})  }   \right \rceil;   
$$
$m(i)$ also depends on $n$, but by \eqref{eq;asyV&G0.0} it is bounded as
a function of $n$. By Lemma \ref{lem;CvxAls} $(\rom1)$ we have for
some large numbers $\gamma_1(i),\, \gamma_2(i)$, 
\begin{align*}
   &\PP \left\{ \sum_{\Gamma_j >   c n^{\rho_i} w_n / \vartheta_n }
    V \left(
    w_n/\Gamma_j  \right) 1_{\{ 0 \in I_{j;n} \} }>  V (\vartheta_n ) - o(V (\vartheta_n
    )) -  V( \vartheta_n c^{-1} n^{-\rho_i})\right\} \\
   \leqslant & \sum_{ s = m } ^ \infty 
    \frac{ 1 }{ s !  }  \PP \left\{ \sum_{j=1}^s  \overline{\xi}_j \geqslant  V (\vartheta_n ) - o(V (\vartheta_n )) - V( \vartheta_n c^{-1} n^{-\rho_i})   \right \} \\
    \leqslant & \sum_{ s = m } ^ \infty 
    \frac{ 1 }{ s !  }   \gamma_1(i)^s 
     \left( V( \vartheta_n c^{-1} n^{-\rho_i}  ) \right) ^
                {\gamma_2(i)}  
    \left(  \overline{H} ( V( \vartheta_n c^{-1} n^{-\rho_i}  )   )   \right) ^ {m(i)} \\
    & \quad \quad \cdot \overline{H} \big(  V(\vartheta_n) - o (V(\vartheta_n)) - m V( \vartheta_n c^{-1} n^{-\rho_i}  )  \big ) 
    \\
    \lesssim & \left( V( \vartheta_n c^{-1} n^{-\rho_i}  ) \right) ^
               {\gamma_2(i)}     \left(  \overline{H} ( V( \vartheta_n
               c^{-1} n^{-\rho_i}  )   )   \right) ^ {m(i)}
    \lesssim           (\log n)^{\gamma_2(i)/\alpha} \bigl(
               \vartheta_n n^{-\rho_i}\bigr)^{-m(i)},  
\end{align*}
where we have once again used \eqref{eq;asyV&G0.0}. Furthermore, 
$$
 m(i)\to 
   \frac{\beta^{1/\alpha} - (\beta - \rho_i)^{1/\alpha} }{ (\beta - \rho_i)^{1/\alpha}}
$$
as $n\to\infty$, so that for large $n$, 
$\bigl(\vartheta_n n^{-\rho_i}\bigr)^{-m(i)} \leqslant n^{-C_1}$
for
$$
C_1= (\beta - \rho_i) \frac{\beta^{1/\alpha} - (\beta - \rho_i)^{1/\alpha} }{ (\beta - \rho_i)^{1/\alpha}} .
$$
Since $\rho_i\to\beta$ as $i\to\infty$ and $\alpha<1$, $C_1$ will
become arbitrarily large for large $i$. Taking into account the bound
on the cardinality of $I_{t,k;n}$ on the event $\Omega_n$, we see that 
for  $i$ such that $C_1>\beta$  and for large $n$, 
\begin{align*}
    &  \mathbb{P} \left \{
    \max_{\lfloor n^{\rho_i} \rfloor \leqslant k\leqslant \lceil \vartheta_n / c \rceil }
   \mathcal{M} ^ \prime _{1,k;n}  
    \geqslant  V(\vartheta_n ) - o(V(\vartheta_n ) )   \, \bigg| \, \Omega_n
      \right \}  \\
     \lesssim &  \vartheta_n  \log n \cdot  n^{-C_1}     \to  0 
\end{align*}
as $n\to\infty$. This establishes \eqref{eq;revised;1-btm2.4.0} and,
hence, completes the proof. 
\end{proof}

\begin{proof}[Proof of Theorem \ref{thm;EPcv}]
Recall that the random sup-measure $\mathcal{M}$
is coupled to the random sup-measures $(\mathcal{M}_n)$ as described
when proving Theorem \ref{thm;RSMcv}. Therefore, 
  it suffices to show that for any fixed $0<T_1<T_2 \leqslant 1$,
\begin{equation} \label{eq;EPcv1.0}
    \left\{
    \frac{ \mathbb{M}_n(t)- b_n}{a_n}
    \right\}_{t\in [T_1,T_2]} \stackrel{P}{\longrightarrow}
    \{ \mathbb{M}(t) \}_{t\in [T_1,T_2]} \quad
    \text{in } \big(D[T_1,T_2], J_1 \big).
\end{equation}
For the duration of the proof we fix a small $\epsilon>0$.

We start by observing that by the construction of $\mathbb{M}(t)$ in
(\ref{eq;limRSM0.0}), (\ref{eq;limRSM[0,1]})  
and (\ref{eq;EP0.0}), there exists $N=N(\epsilon)>0$ such that the
event  
\begin{equation}\label{eq;renew;EPcv2.1}
    \Omega^{(1)} : = \left\{   
    \mathbb{M}(t) = \max_{ 1 \leqslant k,i \leqslant N } \big \{
    \Lambda_{ k , i }:  W_{k,i} \in [0,t]\bigr\} \ \text{for $t\in
      [T_1,T_2]$} 
    \right \} 
\end{equation}
satisfies $ \PP\{ \Omega^{(1)}  \} \geqslant 1- \epsilon $. 
Consider the random finite set 
 $\mathcal{T} = \{  W_{k,i} : 1 \leqslant k, i \leqslant N   \} \cup
 \{ 0, T_1, T_2  \}$. Since the random measure $\eta_\omega$ in
 \eqref{eq;eta_omega} is a.s. atomless, it follows that there is
 $\delta>0$ such that   the event 
\begin{equation}\label{eq;renew;EPcv2.2}
    \Omega^{(2)} : =\big\{ \vert t_1 - t_2 \vert \geqslant \delta  \text{ for all distinct } t_1, t_2 \in \mathcal{T}    \big  \}   
\end{equation}
 satisfies $ \PP\{ \Omega^{(2)}  \} \geqslant 1- \epsilon $. Then on 
 $\Omega^{(1)} \cap \Omega^{(2)} $ the process 
 $\{ \mathbb{M}(t): t\in [T_1, T_2] \}$  has nondecreasing piecewise
 constant sample paths, with at most $N^2$ jumps, and each two jump
 points are separated by at least $\delta$. Let
 $T_1=W_{k_0,i_0}<W_{k_1,i_1}<\cdots<W_{k_\ell,i_\ell}\leqslant T_2$
 be the enumeration of the jumps. 
 
We saw when establishing \eqref{eq;top1.2}  with
$B=[0,1]$, that for any $k,i\in \bbn$, 
$$
    \max_{ t\in I_{k,i;n}  }  \left \vert 
    \frac{ X(t)  - b_n  }{a_n}  - \Lambda_{k,i}
    \right \vert \stackrel{P}{\to} 0 \quad \text{as } n \to \infty.
 $$
Therefore by decreasing $\delta$ if necessary, the event 
\begin{align*}
  \Omega^{(3)}_n : = \Bigg \{ & 
 \left \vert   \frac{ \mathbb{M}_n ([0,T_i]) - b_n }{a_n} -
                                \mathbb{M}(T_i)   \right \vert
                                \leqslant \delta ,  \, i=1,2,\\
     &   \bigvee_{ k,i= 1} ^{ N } 
     \max_{ t\in I_{k,i;n}  }  \left \vert 
    \frac{ X(t)  - b_n  }{a_n}  - \Lambda_{k,i}
    \right \vert   \leqslant \delta   \Bigg \}  
\end{align*}
satisfies $\PP\{ \Omega^{(3)}_n \} \geqslant 1 - \epsilon$ for all
large $n$. Furthermore, by Proposition
\ref{prop;middle} and Proposition \ref{prop;bottom},  for $N$ large
enough, the event 
\begin{equation}\label{eq;renew;EPcv3.3}
\Omega^{(4)}_n : = 
\left\{
\mathbb{M}_n (t) = \max_{1 \leqslant k, i \leqslant N}  \; \max_{ t\in I_{k,i;n} \cap [0, nt] }     X(t)   
\  \text{ for all } \  t\in [T_1, T_2].
\right \}  
\end{equation}
satisfies $\PP\{ \Omega^{(4)}_n \} \geqslant 1 - \epsilon$ for all
large $n$. It is clear that we can select the same $N$ in
\eqref{eq;renew;EPcv2.1} and \eqref{eq;renew;EPcv3.3}. Finally, since
for every $k,i$ 
$$
  I_{ k,i;n }/n  \longrightarrow \{ W_{k,i}\} \ \text{a.s.}
$$
with respect to the Hausdorff metric as $n\to\infty$, the event 
\begin{equation}\label{eq;renew;EPcv3.4}
\Omega^{(5)}_n : = 
\left\{  \max_{ 1\leqslant k,i  \leqslant N } \; 
\max_{ t\in I_{k,i;n}  }
| t/n  - W_{k,i} |
\leqslant \delta/2
\right \}  
\end{equation}
satisfies $\PP\{ \Omega^{(5)}_n \} \geqslant 1 - \epsilon$ for all
large $n$.

Fix $\omega\in \Omega^{(1)} \cap \Omega^{(2)} \cap \Omega^{(3)}_n \cap
\Omega^{(4)}_n \cap \Omega^{(5)}_n$. We construct  a function $e_n:\,
[T_1,T_2]\to [T_1,T_2]$ as follows. First set 
$$
e_n (t) = 
\begin{cases}
T_1 \quad & t = T_1 \\
W_{ k_s , i_s }  & t =  \min \bigl( I_{ k_s , i_s ; n }/n\bigr)\  \text{ for } s=1,\ldots,\ell   \\
T_2, & t = T_2
\end{cases}
$$
and note that due to the choice of $\omega$ this an increasing
function. We extend it to a continuous increasing map from $[T_1,T_2]$
onto $[T_1,T_2]$ by linear interpolation. Then 
\begin{align*}
    & \sup_{t\in [T_1, T_2] } \vert e_n(t) - \text{id}(t) \vert   
    =\bigvee_{s=1}^\ell\left \vert  
    \min\bigl(I_{k_s, i_s;n}/n\bigr)  - W_{k_s, i_s}   \right \vert
      \leqslant  \delta/2
\end{align*}
so that 
\begin{align*}
    & d_{ J_1 } \left (   
    \left\{
    \frac{ \mathbb{M}_n(t)- b_n}{a_n}
    \right\}_{t\in [T_1,T_2]} , 
    \{ \mathbb{M}(t) \}_{t\in [T_1,T_2]} 
    \right )  \\ 
    \leqslant &   \sup_{t\in [T_1, T_2] } \vert e_n(t) - \text{id}(t) \vert   \bigvee   
    \sup_{t\in [T_1, T_2]} \left \vert 
    \frac{ \mathbb{M}_n (t ) - b_n  }{a_n} 
    - \mathbb{M}(e_n(t))
    \right \vert 
    \\ 
    \leqslant & \frac{\delta}{2} \vee  \left( \bigvee_{i=1}^2\left \vert   \frac{ \mathbb{M}_n ([0,T_i]) - b_n }{a_n} - \mathbb{M}(T_i)   \right \vert \right) 
     \vee  \left(  \bigvee_{ s= 1} ^{\ell }
     \max_{ t\in I_{k_s,i_s;n}  }  \left \vert 
    \frac{ X_t - b_n  }{a_n}  - \Lambda_{k_s,i_s}
    \right \vert \right ) \\
    \leqslant &  \delta.
\end{align*}
Since $\delta>0$ is arbitrary, while 
$$
\liminf_{n\to \infty}
 \PP\left\{  \Omega^{(1)} \cap \Omega^{(2)} \Omega^{(3)}_n \cap  \Omega^{(4)}_n \cap \Omega^{(5)}_n \right \} \geqslant 1 - 5 \epsilon, 
 $$
 and $\epsilon>0$ is also arbitrary, \eqref{eq;EPcv1.0} follows. 
\end{proof}

\appendix

\section{Ranges and Local Times  }  \label{sec;AppA}

This appendix is largely devoted to proving Theorem \ref{thm;4joint}. Unless otherwise stated, $F$ denotes the distribution in Assumption \ref{ass;MCmemo}.

Let us begin by looking closely at the set $I_{0;n}$ in (\ref{eq;Ikn}). By the
definition of the probability measure $\mu_n$, the random function
$Y^{(k;n)}$ has its first zero coordinate between $0$ and $n$, and the 
subsequent zero coordinates appear whenever the Markov chain returns
back to zero. Therefore, $I_{0;n}$ is, in distribution, the
restriction to $\{0,\ldots,n\}$ of the range of a random walk with the
step distribution $F$, starting at a random position in
$\{0,\ldots,n\}$. The following definition formalizes this type of
random walks, which we will study in this appendix. 

\begin{assumption} \label{ass;RVRW}
For $n\in\bbn_0$, $\mathbf{S}^ {(n)} =\{ S^{(n)}_t\}_{t\in \bbn_0}$ is
a random walk such that 
\begin{enumerate}[label=(\alph*)]
    \item the initial position  $S^{(n)}_0$ has the law of $\min \{ 0 \leqslant t \leqslant n : Y^{(n)}_t  = 0 \}$, where 
    $\{Y^{(n)}_t\}_{t \in \bbz}$ has the law $\mu_n$ in (\ref{eq;mu_n});
    \item the steps $\{\xi_t\}_{t\in \bbn }$ are $i.i.d.$ with
      distribution $F$, and are independent of $S^{(n)}_0$. 
\end{enumerate}
\end{assumption}
That is, the different random walks in Assumption \ref{ass;RVRW} only
differ in the law of the initial position. Hereafter, $\{ \{
S^{(k;n)}_t\}_{t\in \bbn_0}   \}_{k\in \bbn_0}$ 
denotes a collection of $i.i.d.$ copies of $\mathbf{S}^ {(n)} $. 
For the duration of this section we work with the sets $I_{k;n}$ in
(\ref{eq;Ikn}) written as
\begin{equation}  \label{eq;Ikn*}
    I_{k;n} = \bigl\{ S^{(k;n)}_t : t \in \bbn_0  \} \cap \{0,\ldots,n
    \bigr\},  \ \   n,k \in \bbn_0.
  \end{equation}
By the definition these are nonempty random sets.  

We will need several facts about these random walks, and we list these facts in
the proposition below. Most of them are well known. In the sequel we
use the notation $\min A$ ($\max A$) to denote the smallest (largest)
point in a discrete set $A$. 
\begin{proposition}\label{prop;S_n-Pre}
\begin{enumerate} [label=(\roman*)]
    \item For every $k\in \bbn_0$,
    \begin{equation} \label{eq;S_n-Pre0.0}
    \left( \frac{\min \, I_{k;n}}{n} ,  \frac{I_{k;n}}{n}    \right)\Rightarrow  \left( Z^\ast (0),  \overline{R^\ast} \right) 
    \quad \text{weakly in } \   [0,1] \times  \mathcal{F}([0,1])
  \end{equation}
as $n\to\infty$, where $Z^\ast(0)$ and $\overline{R^\ast}$ are
connected by (\ref{eq;shiftSUB0.1}), \eqref{eq;shiftSRS}  and
(\ref{eq;barSRS}). 
    
    \item If $A_0$ denotes the full range $\{ S^{(0)}_0,
      S^{(0)}_1,\ldots  \}$ of $\mathbf{S}^ {(0)}$, then 
    \begin{equation} \label{eq;S_n-Pre0.1}
     \PP \left\{ n \in A_0  \right \}  \sim \frac{n^{\beta -1} 
     \overline{F}(0) }{ \Gamma(\beta) \Gamma(1-\beta) L(n) }    \quad 
   \text{as $n\to\infty$} 
     \end{equation}
     and
    \begin{equation} \label{eq;S_n-Pre0.2}
    \limsup_{n_0 \to \infty}  \, 
    \sup_{n > n_0} \,
    \max_{0 \leqslant k \leqslant n-1} 
    \max_{m \in \bbz}
    \frac{ \# \big( A_0 \cap [m, m+2^k) \cap [2^{n_0}, 2^n) \big) }{ n \big/ \overline{F}(2^k) }
    < \infty  \quad  a.s.
    \end{equation}

    \item For any $\gamma,\eta > 0$
    \begin{equation} \label{eq;S_n-Pre0.3}
    \# \left\{ k: S^{(0)}_k \leqslant \eta n , \xi_k \geqslant (\log n )^\gamma  \right \} \stackrel{P}{\longrightarrow} \infty, \quad n \to \infty.
    \end{equation}
    
    \item  Let $\{Z(t)\}_{t \in \bbr_+}$ and $\{Z^\ast(t)\}_{t \in
        \bbr_+}$ be given by \eqref{eq;betaSUB} and
      (\ref{eq;shiftSUB0.1}), respectively. Then with $\vartheta_n$
      given by (\ref{eq;vartheta_n}), 
    \begin{align}
       &  \left\{ \frac{1}{\vartheta_n} S^{(0)\leftarrow} _ {\lfloor nt \rfloor}  \right \}_{  t \in \bbr_+  } 
       \Rightarrow \{  Z^\leftarrow (t) \}_{t\in \bbr_+ } \quad \text{in } \big( D(\bbr_+), J_1\big) ,   \label{eq;S_n-Pre0.4} \\
       & \left\{ \frac{1}{\vartheta_n}  S^{(n)\leftarrow} _ {\lfloor nt \rfloor}  \right \}_{  t \in \bbr_+  } 
       \Rightarrow \{  Z^{\ast\leftarrow} (t) \}_{t\in \bbr_+ }
        \quad \text{in } \big( D(\bbr_+), J_1\big).
       \label{eq;S_n-Pre0.5}
    \end{align}

\end{enumerate}

\end{proposition}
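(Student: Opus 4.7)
My plan is to handle the four parts together, with part (iv) (the functional invariance principle) serving as the foundation for parts (i) and (iii). For \eqref{eq;S_n-Pre0.4}, since the steps of $\mathbf S^{(0)}$ are i.i.d.\ positive integers with tail $\overline F(n)=n^{-\beta}L(n)$, the standard invariance principle for sums in the domain of attraction of a $\beta$-stable subordinator gives $\{S^{(0)}_{\lfloor\vartheta_n t\rfloor}/n\}_t\Rightarrow\{Z(t)\}_t$ in $(D(\bbr_+),J_1)$, with the constant in \eqref{eq;vartheta_n} matched to the Laplace-transform normalization \eqref{eq;betaSUB} of the limit. Because $Z$ has strictly increasing, unbounded sample paths a.s., the inverse map is a.s.\ continuous at the limit, and passage to inverses yields \eqref{eq;S_n-Pre0.4}. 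For \eqref{eq;S_n-Pre0.5}, decompose $S^{(n)}_j = S^{(n)}_0 + \tilde S_j$ with $\tilde S$ an independent copy of $S^{(0)}$, couple the convergence $S^{(n)}_0/n\Rightarrow Z^*(0)$ (proved in part (i) below) with the invariance principle for $\tilde S$, and invert.

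For part (i), a direct computation from \eqref{eq;mu_n} using the identity $\cup_{j=0}^{J}\{y:\, y_0,\ldots,y_{j-1}\neq 0,\, y_j=0\}=\cup_{k=0}^{J}A_k$ gives $\PP(S^{(n)}_0\leqslant J)=w_J/w_n$, whence \eqref{eq;wander0.2} yields $\PP(S^{(n)}_0\leqslant \lfloor sn\rfloor)\to s^{1-\beta}$, matching the law \eqref{eq;shiftSUB0.0} of $Z^*(0)$. Conditional on $S^{(n)}_0=\lfloor sn\rfloor$, the remaining walk is independent with i.i.d.\ $F$-steps, so part (iv) together with continuity of the closure-of-range map on nondecreasing, strictly increasing, unbounded $J_1$-paths gives convergence of $I_{k;n}/n$ to $(s+R)\cap [0,1]$ in the Fell topology; joint convergence then follows by Skorohod embedding. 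Part (iii) is a quick consequence of (iv): with $N_n:=\#\{k:S^{(0)}_k\leqslant \eta n\}$, part (iv) yields $N_n/\vartheta_{\eta n}\Rightarrow Z^\leftarrow(\eta)>0$, hence $N_n\gtrsim_P n^\beta/L(n)$; conditional on $N_n$ the count of large jumps is binomial with success probability $\overline F((\log n)^\gamma)\asymp (\log n)^{-\gamma\beta}L((\log n)^\gamma)$, whose product with $N_n$ grows polynomially in $n$, and a Chebyshev bound then gives convergence to $\infty$ in probability.

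Part (ii) is the technical core and contains the main obstacle. For (ii.a), I would cite the infinite-mean renewal theorem of Doney-Erickson for step tails $n^{-\beta}L(n)$ with $\beta\in(0,1)$, which gives $\PP(n\in A_0)\sim \sin(\pi\beta)/\pi\cdot n^{\beta-1}/L(n)$; the reflection formula $\Gamma(\beta)\Gamma(1-\beta)=\pi/\sin(\pi\beta)$ together with $\overline F(0)=1$ rewrites this as \eqref{eq;S_n-Pre0.1}. The main obstacle is (ii.b), the a.s.\ uniform bound across positions $m\in\bbz$, scales $2^k$, and outer ranges $[2^{n_0},2^n)$. The plan is, for a large constant $C$, to obtain an exponential-Markov bound $\PP\bigl\{\#\bigl(A_0\cap[m,m+2^k)\cap[2^{n_0},2^n)\bigr)> Cn/\overline F(2^k)\bigr\}\leqslant \exp(-c(C)n)$ with $c(C)\to\infty$ as $C\to\infty$, by controlling the exponential moment of the count of renewal points in a block of length $2^k$ via a regeneration decomposition into i.i.d.\ inter-renewal blocks and the mean estimate from part (a). Since only $O(2^n)$ values of $m$ contribute, summing over $m$, over $k\in\{0,\ldots,n-1\}$, and then over $n>n_0$ is summable once $c(C)>2\log 2$, and Borel-Cantelli completes the argument. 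Obtaining a single constant $c(C)$ that works uniformly across all scales $2^k$ is the delicate point and will constitute the bulk of the technical labor.
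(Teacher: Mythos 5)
Your proposal is essentially sound, but it takes a much more self-contained route than the paper. The paper disposes of parts (i)--(iii) entirely by citation: (i) is Theorem 5.4 of \cite{samorodnitsky:wang:2019}, and (ii)--(iii) are (A.2), Lemma A.1 and Lemma A.2 of \cite{chen:samorodnitsky:2020}; only part (iv) is argued directly, and there your argument coincides with the paper's (invariance principle for the walk, then inversion, noting that convergence of the inverses is first obtained in a weaker mode and upgraded to $J_1$ because $Z^{\leftarrow}$ is a.s.\ continuous; the paper routes this through $M_1$ via Whitt, which is the clean way to justify your phrase ``the inverse map is a.s.\ continuous at the limit''). What your approach buys is transparency: the computation $\PP\{S^{(n)}_0\leqslant J\}=w_J/w_n$ combined with \eqref{eq;wander0.2} is exactly the right way to see where the law \eqref{eq;shiftSUB0.0} of $Z^\ast(0)$ comes from, and your Borel--Cantelli scheme for \eqref{eq;S_n-Pre0.2} does work; in fact the ``delicate'' uniformity over scales that worries you is elementary, since the number of renewal points in any window of length $\ell$ is stochastically dominated by $1+N_\ell$ with $\PP\{N_\ell\geqslant r\}=\PP\{\xi_1+\cdots+\xi_r<\ell\}\leqslant (1-\overline F(\ell))^r$, giving $\PP\{\#(A_0\cap[m,m+2^k))>Cn/\overline F(2^k)\}\leqslant e^{-cCn}$ with $c$ independent of $k$ and $m$.

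Two caveats you should repair. First, in (ii.a) the strong renewal theorem is \emph{not} automatic for step tails $n^{-\beta}L(n)$ with $\beta\in(0,1/2)$, which is precisely the regime here; it can fail without further hypotheses, and it is Assumption \ref{ass;MCmemo}, specifically \eqref{eq;MCmemo0.1}, that makes Doney's Theorem B applicable (the paper flags this in Remark \ref{rmk;Mcmemo}). Your phrasing ``for $\beta\in(0,1)$'' suggests you may be invoking Erickson's unconditional result, which only covers $\beta\in(1/2,1)$. Second, in (iii) the count of large jumps is \emph{not} binomial conditionally on $N_n$, since the event $\{N_n=N\}=\{S_N\leqslant \eta n<S_{N+1}\}$ biases the step sizes; the standard fix is to pick a deterministic $K_n$ with $\PP\{N_n\geqslant K_n\}\to 1$ and lower-bound the count by the genuinely binomial count over $k\leqslant K_n$. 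Both repairs are routine, so I regard these as presentation gaps rather than structural ones.
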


\begin{proof}
$(\rom1)$: The claim follows from Theorem 5.4 in \cite{samorodnitsky:wang:2019}.

$(\rom2)$: See (A.2) and Lemma A.1 in Appendix A in \cite{chen:samorodnitsky:2020}.

$(\rom3)$: See Lemma A.2 in Appendix A in \cite{chen:samorodnitsky:2020}.

$(\rom4)$:  We first show (\ref{eq;S_n-Pre0.4}). 
For a sequence $\{c_n\}$ satisfying  $n \overline{F}(c_n)
\sim  1/\Gamma(1-\beta)$ we have 
  \begin{equation} \label{eq;S_n-Pre1.0}
 \left\{  \frac{1}{c_n} S^ {(0)} _ {\lfloor nt \rfloor} \right \}_{t\in \bbr_+ } \Rightarrow
        \{Z(t)\}_{t \in \bbr_+} \quad \text{in } \big(D(\bbr_+),J_1
        \big); 
\end{equation}
see $e.g.$ Theorem 4.5.3 in \cite{whitt:2002}, and it follows that (\ref{eq;S_n-Pre0.4}) holds
in the $M_1$-topology (see \cite{whitt:1971a}). 
Because the  process $\{  Z^\leftarrow (t) \}_{t\in \bbr_+ }$ is $a.s.$
continuous,  the convergence also holds in the  $J_1$-topology, see
Section 12.4  in \cite{whitt:2002}. We combine this argument with part (i) of the
proposition to get (\ref{eq;S_n-Pre0.5}). 
\end{proof}

\medskip

We will occasionally drop the superscript on our random walk whenever the
discussion depends only on the step distribution of the
walk. Denoting for $A\subset \bbz$ and $a\in A$
$$
\PP_a \{\mathbf{S} \text{ escapes } A \}  = \PP \{ \mathbf{S} \text{ does not hit } A \setminus \{a\} \, \big\vert \, S_0 = a \},$$
we define the capacity of $A$ by 
\begin{equation} \label{eq;capDEF0.0}
    \text{cap} (A)  = \sum_{a \in A} \PP_a \{\mathbf{S} \text{ escapes
    } A  \}. 
\end{equation}
It is well known that $\text{cap}(A_1) \leqslant \text{cap}(A_2)$ if $A_1
\subset A_2$, and $\text{cap}(A_1
  \cup A_2) \leqslant \text{cap}(A_1) + \text{cap}(A_2)$ for any
  $A_1,A_2 $; see \cite{spitzer:1964}.

For $0\leqslant m_1 < m_2 \leqslant \infty$ we consider the range 
$$
A_0(m_1, m_2)  = \{ S^{(0)}_{m_1},\ldots, S^{(0)}_{m_2 -1} \}, 
$$
so that $A_0=A_0(0,\infty)$ is the full range.

\begin{proposition}  \label{prop;S_0}
 \begin{enumerate} [label=(\roman*)]
    \item  Fix $\gamma > (1-2\beta)^{-1}$, and let $\mathbf{S}=\{ S_t\}_{t\in \bbn_0}$ be
      independent of $A_0$. Then
      \begin{equation} \label{e:esc.cond}
\max_{0\leq j\leq n-(\log n)^\gamma} \PP \Bigl( \mathbf{S} \ \text{ hits } \
A_0 \cap \bigl\{ j+\lceil (\log n)^\gamma\rceil, \ldots, n\bigr\}\big|
S_0=j, A_0 \Bigr) \to 0
\end{equation}
$a.s.$ as $n\to\infty$. In particular, if for $n=1,2,\ldots, V_{1;n}$
and $V_{1;n}$ are measurable nonempty subsets of 
$A_0 \cap \{0,\ldots, n\}$
with $\min V_{2;n} - \max V_{1;n} \geqslant (\log n )^\gamma$, then  
     \begin{equation} \label{eq;gap0.1}
    \frac{1}{ \# V_{1;n} }\Bigl( \text{cap}\left( V_{1;n} \cup V_{2;n} \right) 
    - \text{cap}\left( V_{1;n}  \right) -
      \text{cap}\left( V_{2;n} \right)\Bigr)\to 0 \ \ \text{a.s..}
    \end{equation}

    \item Let $\widetilde{\mathbf{S}}^{(0)}$ be an independent copy of
      $\mathbf{S}^{(0)}$, with ranges denoted by
      $\widetilde{A}_0(\cdot,\cdot)$. Then  
      
    \begin{equation} \label{eq;c_infty} 
    c_\infty : = \PP\left\{ A_0 \cap  \widetilde{A}_0   = \{ 0 \}   \right \} 
       \in  (0,1).   
    \end{equation}
    Furthermore, 
    \begin{equation} \label{eq;CAPcvg0.0}
    \frac{\text{cap}\left(A_0(0,n) \right)}{n} 
        \longrightarrow c_\infty    \ \ \text{a.s.  as  $n \to \infty$.}
    \end{equation}
\end{enumerate}
\end{proposition}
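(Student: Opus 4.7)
The plan is to prove part (i) first and then deduce part (ii) from it. The key structural fact exploited throughout is that the random walk $\mathbf{S}$ is monotone increasing (its steps $\xi_t$ are positive integers), so starting at a point $a$ it can only reach $A_0$ at points $\geqslant a$.

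For part (i), set $\tau := \lceil (\log n)^\gamma\rceil$ and begin from the union bound
$$\PP\bigl(\mathbf{S}\ \text{hits}\ A_0\cap\{j+\tau,\ldots,n\}\,\big|\,S_0=j,A_0\bigr)\leqslant \sum_{m\in A_0\cap\{j+\tau,\ldots,n\}}u(m-j),$$
where $u(k)=\PP(k\in A_0)\lesssim k^{\beta-1}/L(k)$ by \eqref{eq;S_n-Pre0.1}. I will split this sum dyadically over shells $\{m:2^i\tau\leqslant m-j<2^{i+1}\tau\}$; in each shell the count of $A_0$-points is bounded a.s.\ by $O\bigl((\log n)\cdot 2^{i\beta}/L(2^i\tau)\bigr)$ via \eqref{eq;S_n-Pre0.2}, so the shell contributes $O\bigl((\log n)\cdot 2^{i(2\beta-1)}\bigr)$ up to slowly varying factors. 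The series over $i$ converges since $2\beta<1$, and its leading term is of order $(\log n)^{1-\gamma(1-2\beta)}$, which vanishes precisely because $\gamma(1-2\beta)>1$. Since the bound is uniform in $j$, this yields \eqref{e:esc.cond}. The capacity identity \eqref{eq;gap0.1} then follows by splitting $\text{cap}(V_{1;n}\cup V_{2;n})$ into sums over $V_{1;n}$ and $V_{2;n}$ and invoking monotonicity: for $a\in V_{2;n}$ the walk cannot reach any point of $V_{1;n}$, so $\PP_a(\mathbf{S}\ \text{escapes}\ V_{1;n}\cup V_{2;n})=\PP_a(\mathbf{S}\ \text{escapes}\ V_{2;n})$ exactly; for $a\in V_{1;n}$, the two escape probabilities differ by at most $\PP_a(\mathbf{S}\ \text{hits}\ V_{2;n})=o(1)$ uniformly, by \eqref{e:esc.cond}.

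For part (ii), I first establish $c_\infty\in(0,1)$. The bound $c_\infty<1$ is immediate from $\PP(\xi_1=\tilde\xi_1)=\sum_k\PP(\xi_1=k)^2>0$. For $c_\infty>0$ I apply the strong Markov property at the successive common points of $A_0$ and $\widetilde A_0$: after each meeting both walks restart afresh, hence $N:=|A_0\cap\widetilde A_0|$ is geometric with success parameter $c_\infty$, giving $\EE N=1/c_\infty$. On the other hand
$$\EE N=\sum_{m\geqslant 0}u(m)^2<\infty,$$
by \eqref{eq;S_n-Pre0.1} combined with $\beta<1/2$ (which makes $2(\beta-1)<-1$), so $c_\infty>0$.

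For the convergence \eqref{eq;CAPcvg0.0}, monotonicity lets me rewrite
$$\text{cap}(A_0(0,n))=\sum_{k=0}^{n-1}\phi_k^{(n)},\quad \phi_k^{(n)}:=\PP_{S_k^{(0)}}\bigl(\mathbf{S}\ \text{avoids}\ \{S_{k+1}^{(0)},\ldots,S_{n-1}^{(0)}\}\,\big|\,A_0\bigr),$$
and set $\phi_k=\lim_n\phi_k^{(n)}=\PP_{S_k^{(0)}}(\mathbf{S}\ \text{avoids}\ A_0\cap(S_k^{(0)},\infty)\mid A_0)$. By shift invariance of the i.i.d.\ sequence $\{\xi_j\}$, $\{\phi_k\}_{k\geqslant 0}$ is stationary and ergodic with $\EE\phi_0=c_\infty$. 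Since $\phi_k^{(n)}\geqslant\phi_k$, Birkhoff's theorem gives $\liminf_n n^{-1}\text{cap}(A_0(0,n))\geqslant c_\infty$. For the matching upper bound I will use the truncations $\phi_k^K:=\PP_{S_k^{(0)}}(\mathbf{S}\ \text{avoids}\ \{S_{k+1}^{(0)},\ldots,S_{k+K-1}^{(0)}\}\mid A_0)$, which satisfy $\phi_k^{(n)}\leqslant\phi_k^K$ whenever $k\leqslant n-K$; a second application of Birkhoff yields $\limsup_n n^{-1}\text{cap}(A_0(0,n))\leqslant\EE\phi_0^K$, and $\phi_0^K\downarrow\phi_0$ as $K\to\infty$ closes the sandwich by monotone convergence. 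The main obstacle will be the dyadic bookkeeping in part (i): making the hitting bound uniform in $j$ while using \eqref{eq;S_n-Pre0.2} to control the random $A_0$-counts in each shell, so that the resulting power of $\log n$ goes below zero exactly in the range $\gamma>(1-2\beta)^{-1}$.
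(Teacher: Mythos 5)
Your proposal is correct. Part (i) is essentially the paper's own argument: the same union bound over dyadic shells, with the shell counts controlled by \eqref{eq;S_n-Pre0.2} and the hitting probabilities by \eqref{eq;S_n-Pre0.1}, yielding a bound of order $(\log n)^{1+\gamma(2\beta-1)}$ up to slowly varying corrections (which, as in the paper, need a small Potter-type $\epsilon$ in the exponent, available since $\gamma>(1-2\beta)^{-1}$ is an open condition); your intermediate shell count should read $(2^i\tau)^\beta$ rather than $2^{i\beta}$, but your final exponent is the right one. The derivation of \eqref{eq;gap0.1} from \eqref{e:esc.cond} via monotonicity of the walk, and the proof that $c_\infty\in(0,1)$ via summability of $u(m)^2$ together with the geometric structure of $\#(A_0\cap\widetilde A_0)$ under the strong Markov property, also match the paper. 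Where you genuinely diverge is \eqref{eq;CAPcvg0.0}: the paper applies Kingman's subadditive ergodic theorem to the stationary subadditive array $\{\mathrm{cap}(A_0(m_1,m_2))\}$, obtains an a.s.\ constant limit $\Upsilon$, and then identifies $\Upsilon=c_\infty$ through a separate Ces\`aro-mean computation of $\EE\,\mathrm{cap}(A_0(0,n))/n$; you instead write $\mathrm{cap}(A_0(0,n))$ exactly as $\sum_{k<n}\phi_k^{(n)}$ (legitimate by monotonicity of the walk) and sandwich it between Birkhoff averages of the stationary ergodic sequences $\{\phi_k\}$ and $\{\phi_k^K\}$, closing the gap by monotone convergence $\EE\phi_0^K\downarrow\EE\phi_0=c_\infty$. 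Your route avoids Kingman entirely and identifies the limiting constant in the same stroke, at the cost of the extra truncation layer; the paper's route is shorter to state but needs the additional identification step. Both are sound.
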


\begin{proof}

  $(\rom1)$:  For \eqref{e:esc.cond} we write for $0\leq j\leq n-(\log
  n)^\gamma$, 
  \begin{align*}
&P\Bigl( \mathbf{S} \ \text{ hits } \
 A_0 \cap \bigl\{ j+\lceil (\log n)^\gamma\rceil, \ldots, n\bigr\}\big|
    S_0=j, A_0 \Bigr) \\
    \leqslant& 
  \sum_{k=\lfloor \log_2 \lceil(\log n)^\gamma\rceil  \rfloor} ^ { \lceil \log_2 n \rceil }
  \# \Big(  A_0  \cap [ j + 2^k , j+ 2^{k+1}  )    \Big)  
  \cdot \max_{m\geq 2^k}\PP_{0} \left\{  \mathbf{S} \text{ hits } m
               \right \}. 
  \end{align*}
  
  By  \eqref{eq;S_n-Pre0.2},   there is an $a.s.$ finite constant
   $B_1 = B_1(A_0)$
   such that the first term in the sum does not exceed
 $B_1 \log n/\bar F(2^k)$, while by \eqref{eq;S_n-Pre0.1}, the
  second term does not exceed $B_2 2^{(\beta - 1)k}/L(2^k)$ for some
  finite constant $B_2$. Therefore, for any $\epsilon>0$, the sum above
  can be bounded by
  $$
B_1  B_2  \sum_{k=\lfloor \log_2 (\log n)^\gamma  \rfloor} ^ \infty
  \frac{\log n}{\overline{F}(2^k)} \cdot 
   \frac{2^{(\beta - 1)k}}{L(2^k)} \lesssim (\log n) ^
   {1+(2\beta-1+\epsilon)\gamma}. 
   $$
Choosing $0<\epsilon<1-2\beta - \gamma^{-1}$ proves
\eqref{e:esc.cond}.

For  \eqref{eq;gap0.1} we enumerate, 
for a fixed $n$ and $i=1,2$,  $V_{i;n}$ from
left to right as $$ \{v_{i,1},\ldots,v_{i,n_i}\}.$$ 
Then 
\begin{align*}
   & \text{cap}\left( V_{1;n} \cup V_{2;n} \right)
    =   \sum_{j=1}^{n_1} \PP_{v_{1,j}} \left\{ \mathbf{S} \text{
     escapes } V_{1;n} \cup V_{2;n} \big|  A_0 \right \} 
      + \text{cap}\left( V_{2;n} \right) \\
   & \phantom{\text{cap}\left( V_{1;n} \cup V_{2;n} \right)}
   = \text{cap}\left( V_{1;n} \right) - 
     \sum_{j=1}^{n_1} q_j  +  \text{cap}\left( V_{2;n} \right) ,
\end{align*}
where in the obvious notation
\begin{align*}
   & q_j : = \PP\left\{ \mathbf{S} \text{ escapes } V_{1;n}
     \text{ but hits } V_{2;n} \big|  A_0, S_0=v_{1,j}\right \}.    
\end{align*}
Now  (\ref{eq;gap0.1}) follows from \eqref{e:esc.cond}.

$(\rom2)$:  Note that by \eqref{eq;S_n-Pre0.1}, 
$$
\PP \{ k \in   A_0  \cap \widetilde{A}_0   \} = \PP \{ k \in
 A_0  \} \cdot  \PP \{ k \in \widetilde{A}_0   \} \in
\text{RV}_{2\beta - 2}, 
$$
so it is a summable sequence. This implies (\ref{eq;c_infty}). 
To prove (\ref{eq;CAPcvg0.0}),  we  observe that the array 
$\{\text{cap}\left(A_0(m_1,m_2) \right)\}_{m_1 <m_2}$ forms a
stationary and subadditive family, so by the subadditive ergodic
theorem (see Theorem 5 in \cite{kingman:1968}) we have 
$$
\frac{\text{cap}\left( A_0(0,n) \right)}{n} \rightarrow \Upsilon \ \
a.s. 
$$
for some random variable $0\leqslant \Upsilon \leqslant 1$. Since the
invariant $\sigma$-field associated with the array is clearly 
trivial, it follows from Theorem 3 {\it ibid.} that $\Upsilon$ is
 a constant. It remains to show that the constant is equal to
$c_\infty$.  We 
have 
$$
\Upsilon =  \lim_{n\to \infty} \frac{1}{n} \sum_{i=0} ^ n \mathbb{P} \left\{ \widetilde{A}_0  +S^{(0)}_i \bigcap A_0(i,n) = \{i\} \right\}
$$
$a.s.$, hence
$$
\Upsilon =  \lim_{n\to \infty} \frac{1}{n} \sum_{i=0} ^ n \mathbb{P}
\left\{  \widetilde{A}_0  \cap A_0(0,n-i) = \{0\} \right\} \ \ a.s.
$$
This is the arithmetic mean of a sequence that converges to
$c_\infty$, so $\Upsilon =c_\infty$ $a.s.$.  
\end{proof}

\begin{proposition}  \label{prop;S_n}
For $n\in \bbn$,  let  
$$
    \overline{A_n}  =  \{ S^{(n)}_t : t =0,1,2,\ldots \}  \cap \{0,\ldots,n\}.
$$
Let $\{ Z^\ast (t)\}_{t\in \bbr_+}$, $\overline{R^\ast}$ and
$\vartheta_n$ be as in (\ref{eq;shiftSUB0.1}), 
(\ref{eq;barSRS}) and \eqref{eq;vartheta_n} respectively. 
\begin{enumerate}[label=(\roman*)]
    \item As $n\to \infty$
$$
     \left(  \left\{ \frac{1}{\vartheta_n} S^ { (n)  \leftarrow}_ {\lfloor nt \rfloor} \right\}_{t \in \reals_+ }, \frac{\overline{A_n}}{n}  \right) 
    \Rightarrow \left(  \{ Z^{ \ast \leftarrow} (t) \}_{t\in \reals_+} ,
    \overline{R^\ast}\right) \quad \text{in } \big(D(\bbr_+), J_1
  \big) \times \calF([0,1]).
$$

    \item  As $n \to \infty$
 \begin{align*}
   &\left\{  \frac{\text{cap} \left( \overline{ A_n } \cap \{ \lceil nx \rceil,\ldots, \lfloor ny \rfloor  \} \right)}
    { \vartheta_n }, \, 0\leqslant x<y \leqslant 1\right\} \\
\Rightarrow &\left\{ c_\infty \left( Z^{\ast\leftarrow}(y) -
    Z^{\ast\leftarrow}(x) \right),  \, 0\leqslant x<y \leqslant
              1\right\}
 \end{align*}
 in finite-dimensional distributions. 
\end{enumerate}
\end{proposition}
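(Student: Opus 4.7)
For part (\rom1), I would upgrade the one-dimensional functional limit of Proposition~\ref{prop;S_n-Pre}(\rom4) via Skorohod embedding: couple so that $\vartheta_n^{-1} S^{(n)\leftarrow}(\lfloor n\cdot \rfloor) \to Z^{*\leftarrow}(\cdot)$ holds a.s., which by continuity of $Z^{*\leftarrow}$ is in fact uniform on compacts. The set $\overline{A_n}/n$ is a deterministic functional of this rescaled inverse: an integer $k \in [0,n]$ lies in $\overline{A_n}$ iff $S^{(n)\leftarrow}(k) < S^{(n)\leftarrow}(k+1)$, i.e., at those levels where the inverse process jumps. The hitting/missing criterion \eqref{eq;RSCwkcvg} on an open interval $(a,b) \subset [0,1]$ thus reads
$$
\{\overline{A_n}/n \cap (a,b) \neq \emptyset\} = \bigl\{S^{(n)\leftarrow}(\lfloor nb \rfloor + 1) > S^{(n)\leftarrow}(\lceil na \rceil)\bigr\},
$$
with the analogous characterization $\{\overline{R^*} \cap (a,b) \neq \emptyset\} = \{Z^{*\leftarrow}(b) > Z^{*\leftarrow}(a)\}$ valid at continuity sets of $\overline{R^*}$; a.s.\ uniform convergence of the inverse processes then transfers this event to the limit, giving the joint Fell convergence claimed.

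For part (\rom2), the strategy is a strong-Markov reduction to the capacity SLLN of Proposition~\ref{prop;S_0}(\rom2). Fix $0 \leqslant x < y \leqslant 1$ and set
$$
\tau_n(x) = S^{(n)\leftarrow}(\lceil nx \rceil), \quad \tau_n(y) = S^{(n)\leftarrow}(\lfloor ny \rfloor + 1), \quad M_n = \tau_n(y) - \tau_n(x).
$$
Since the step distribution $F$ is supported on positive integers, $\mathbf{S}^{(n)}$ is strictly increasing, so
$$
\overline{A_n} \cap \{\lceil nx\rceil,\ldots,\lfloor ny\rfloor\} = \{S^{(n)}_{\tau_n(x)}, \ldots, S^{(n)}_{\tau_n(y)-1}\}
$$
has exactly $M_n$ elements. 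By the strong Markov property at $\tau_n(x)$, the post-$\tau_n(x)$ increments form an independent fresh walk $\tilde{\mathbf{S}}$ with $\tilde S_0 = 0$ and i.i.d.\ $F$-steps, and $M_n$ becomes the first-passage time of $\tilde{\mathbf{S}}$ above $\lfloor ny \rfloor - S^{(n)}_{\tau_n(x)}$. Translation invariance of the capacity then yields
$$
\text{cap}\bigl(\overline{A_n} \cap \{\lceil nx\rceil,\ldots,\lfloor ny\rfloor\}\bigr) \eid \text{cap}(\tilde A_0(0, M_n)).
$$

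From part (\rom1), jointly in the levels $x$ and $y$, $M_n/\vartheta_n \to Z^{*\leftarrow}(y) - Z^{*\leftarrow}(x)$ in probability. Proposition~\ref{prop;S_0}(\rom2) gives $\text{cap}(\tilde A_0(0,m))/m \to c_\infty$ a.s.\ along deterministic $m$; to compose these at the random index $M_n$ I would invoke the elementary Lipschitz bound $|\text{cap}(\tilde A_0(0,m')) - \text{cap}(\tilde A_0(0,m))| \leqslant |m'-m|$, valid because each additional point contributes at most $1$ to the capacity. Combined with the pointwise a.s.\ limit, this yields the uniform-on-compact-ratios limit $\sup_{a\vartheta_n \leqslant m \leqslant b\vartheta_n}|\text{cap}(\tilde A_0(0,m))/m - c_\infty| \to 0$ a.s., which substitutes the random index to give the marginal limit $\text{cap}/\vartheta_n \to c_\infty(Z^{*\leftarrow}(y) - Z^{*\leftarrow}(x))$ in probability. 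The finite-dimensional joint convergence for pairs $(x_i,y_i)_{1 \leqslant i \leqslant r}$ reduces to the case of pairwise disjoint windows by decomposing the union $\bigcup_i [x_i,y_i]$ into disjoint atomic subintervals separated by negligible $(\log n)^\gamma/n$-gaps (with $\gamma$ as in Proposition~\ref{prop;S_0}(\rom1)), applying the above strong Markov reduction iteratively at the ordered stopping times, and recombining via the approximate-additivity estimate \eqref{eq;gap0.1}. The main obstacle I anticipate is precisely this composition step --- transferring the deterministic-index capacity SLLN to the random index $M_n$ --- which the Lipschitz bound resolves, together with the bookkeeping needed to handle overlapping pairs through the additivity estimate.
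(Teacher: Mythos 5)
Your architecture coincides with the paper's on both parts: in part $(\rom1)$ you reduce the set--hitting events to increments of the inverse process and transfer them to the limit (the paper does the same computation when identifying subsequential limits on the generating $\pi$-system), and in part $(\rom2)$ you make the same strong Markov/last-exit reduction of the windowed capacity to $\text{cap}\bigl(A_0(0,M_n)\bigr)$ with $M_n=S^{(n)\leftarrow}_{\lfloor ny\rfloor}-S^{(n)\leftarrow}_{\lceil nx\rceil}$, followed by composition with the capacity law of large numbers of Proposition \ref{prop;S_0} $(\rom2)$. There is, however, one genuine soft spot in part $(\rom1)$, exactly at the step you describe as ``a.s.\ uniform convergence of the inverse processes then transfers this event to the limit.'' Write $\Delta_n=S^{(n)\leftarrow}_{\lfloor nb\rfloor+1}-S^{(n)\leftarrow}_{\lceil na\rceil}\in\bbn_0$ and $W=Z^{\ast\leftarrow}(b)-Z^{\ast\leftarrow}(a)$. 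Almost sure convergence of $\Delta_n/\vartheta_n$ to $W$ gives $\liminf_n\PP\{\Delta_n\geqslant 1\}\geqslant \PP\{W>0\}$, but it does not rule out that $\Delta_n\geqslant 1$ with non-vanishing probability on the event $\{W=0\}$: there $\Delta_n/\vartheta_n\to 0$, yet $\Delta_n$ is a nonnegative integer and may stay positive for all $n$, so the indicator of the prelimit hitting event need not converge to the indicator of the limit hitting event. One needs the extra estimate
$$
\lim_{\vep\to 0}\limsup_{n\to\infty}\PP\bigl\{0<\Delta_n/\vartheta_n<\vep\bigr\}=0,
$$
which the paper supplies by a Portmanteau argument resting on the already-established marginal Fell convergence $\overline{A_n}/n\Rightarrow\overline{R^\ast}$ of Proposition \ref{prop;S_n-Pre} $(\rom1)$. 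Importing that marginal convergence into your transfer step closes the argument; without it the step as written does not go through.

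On part $(\rom2)$, your two auxiliary devices are correct but heavier than needed. The Lipschitz bound $0\leqslant \text{cap}(A_0(0,m'))-\text{cap}(A_0(0,m))\leqslant m'-m$ is valid (monotonicity plus subadditivity and $\text{cap}(\{a\})\leqslant 1$), but composing the a.s.\ limit $\text{cap}(A_0(0,m))/m\to c_\infty$ with the random index is already immediate whenever $M_n\to\infty$, and on the complementary event the trivial bound $\text{cap}\leqslant M_n$ together with $M_n/\vartheta_n\to 0$ suffices --- this is how the paper handles its dichotomy between $\overline{R^\ast}$ hitting or missing $(x,y)$. Likewise, the disjointification of overlapping windows and the recombination via \eqref{eq;gap0.1} are unnecessary: under the Skorohod coupling each windowed capacity converges \emph{in probability} to $c_\infty\bigl(Z^{\ast\leftarrow}(y)-Z^{\ast\leftarrow}(x)\bigr)$ on a common probability space, so the finite-dimensional convergence is automatic and no approximate-additivity bookkeeping is required.
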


\begin{proof}
$(\rom1)$: 
Since the marginal convergence has been established in Proposition
\ref{prop;S_n-Pre} $(\rom1)$ and $(\rom4)$, the tightness is automatic. 
It suffices, therefore, to show  uniqueness of subsequential weak
limits. Suppose that for some subsequence $\{n_k \}$, 
$$
   \left( \frac{1}{\vartheta_{n_k}} \left\{ S^{(n_k) ^ \leftarrow }_ {\lfloor n_k t \rfloor} \right\}_{t \in \reals_+ }, \frac{1}{n_k}\overline{A_{n_k}}  \right) 
    \Rightarrow \left(  \{ Z^\kappa (t) \}_{t\in \reals_+},  
    R^\kappa \right) 
  $$
for some $Z^\kappa$ and $R^\kappa$. We will prove that, necessarily, 
\begin{equation} \label{eq;S_n2.1}
   \left(  \{ Z^\kappa (t) \}_{t\in \reals_+}, 
    R^\kappa \right) \eid \left(  \{ Z^{ \ast \leftarrow} (t) \}_{t\in \reals}, 
    \overline{R^\ast}\right).
\end{equation}
To this end, recall that the $\pi$-systems 
$$
   \mathcal{C}_D  = \big\{  \{ x(t_i) > a_i : t_i \geqslant 0, a_i \in \bbr, 1\leqslant i \leqslant
   \ell  \} : \ell \in \bbn   \big \}
   $$
   and
$$   
   \mathcal{C}_\mathcal{F}  = \left\{ \mathcal{F}^T  : T \ \ \text{a
       finite union of open intervals}      \right\} 
$$
generate the respective $\sigma$-fields in $D(\bbr_+)$  and
$\mathcal{F}$, so it is enough to check that the 
laws of $ \left(  \{ Z^ \kappa (t) \}_{t\in \reals_+}, 
    R^\kappa \right)$ and 
    $ \left(  \{ Z^{ \ast } (t) \}_{t\in \reals_+}, \overline{R^\ast}\right)$
agree on $\mathcal{C}_D\times \mathcal{C}_\mathcal{F}$. That is, for any $\ell_1,\ell_2 \in \bbn_0,\,
t_i> 0, a_i \in \bbr$, $i=1,\ldots, \ell_1$ and disjoint open intervals  $T_1,\ldots,
T_{\ell_2}$, we have
$$
    \PP \Bigl\{ \bigcap_{\substack{1\leqslant i \leqslant \ell_1 \\ 1\leqslant j \leqslant \ell_2}} \left\{ R^\kappa \cap T_j \neq \emptyset ,  Z^\kappa (t_i) > a_i \right\} \Bigr\}
   =
    \PP \Bigl\{
    \bigcap_{\substack{1\leqslant i \leqslant \ell_1 \\ 1\leqslant j \leqslant \ell_2}}  \{ \overline{R^\ast} \cap T_j \neq \emptyset ,  Z^{\ast \leftarrow} (t_i) > a_i  \} \Bigr\}.
    $$
Denote $T_1 = (c_1,d_1), \ldots, T_{\ell_2} =
(c_{\ell_2},d_{\ell_2})$. We have 
\begin{align*}
     & \PP \Big \{ \bigcap_{\substack{1\leqslant i \leqslant \ell_1 \\ 1\leqslant j \leqslant \ell_2}}
   \left\{   Z^\kappa (t_i) > a_i , R^\kappa \cap T_j \neq \emptyset \right\} \Big \}  \\
   = &  \lim_{k \to \infty } \PP \bigg\{ \bigcap_{\substack{1\leqslant i \leqslant \ell_1 \\ 1\leqslant j \leqslant \ell_2}}
   \bigg \{   \frac{S ^ {( n_k) ^ \leftarrow} _ { \lfloor n_k t_i \rfloor }}{\vartheta_{n_k}}  > a_i ,
   \overline{A_{n_k}} \cap n_kT_j  \neq \emptyset 
   \bigg \} \bigg \} \\
   = &   \lim_{k \to \infty } \PP \bigg\{ \bigcap_{\substack{1\leqslant i \leqslant \ell_1 \\ 1\leqslant j \leqslant \ell_2}}
   \bigg \{   \frac{ S^  {( n_k) ^ \leftarrow} _ { \lfloor n_k t_i \rfloor }}{\vartheta_{n_k}} > a_i , 
    \frac{ S^ {(n_k) \leftarrow} _{ \lfloor n_k d_j \rfloor }
   - S^ {(n_k) \leftarrow} _ { \lceil n_k c_j \rceil }}
   {\vartheta_{n_k}} > 0 
   \bigg \} \bigg \} \\
   = & \PP \Big\{ \bigcap_{\substack{1\leqslant i \leqslant \ell_1 \\ 1\leqslant j \leqslant \ell_2}} \left\{  Z^{\ast\leftarrow} (t_i) > a_i ,  Z^{\ast \leftarrow} (d_j) -  Z^{\ast\leftarrow} (c_j) > 0   \right \} \Big \} \\
   = & \PP \bigg\{ \bigcap_{\substack{1\leqslant i \leqslant \ell_1 \\ 1\leqslant j \leqslant \ell_2}} \left\{   Z^{\ast\leftarrow} (t_i) > a_i , \overline{R^\ast} \cap T_j \neq \emptyset  \right\} \bigg \},
\end{align*}
as long as we can justify the penultimate equality. 

Since each
$Z^{\ast\leftarrow} (t_i)$ is a continuous random variable,
by  the
Portmanteau Theorem  we only
need to check that
$$
\lim_{\vep\to 0} \limsup_{n\to\infty}\PP \bigg\{  
   0< \frac{ S^ {(n) \leftarrow} _{ \lfloor n d \rfloor }
   - S^ {(n) \leftarrow} _ { \lceil n c \rceil }}
   {\vartheta_n}
   <\vep
   \bigg \}  =0
$$
for any $0\leq c<d\leq 1$. This follows from the
marginal
convergence given in Proposition \ref{prop;S_n-Pre} $(\rom1)$.

$(\rom2)$: By the Skorokhod embedding theorem, the convergence in part
$(\rom1)$ of the proposition  holds as the $a.s.$ 
convergence on some probability space, which will again be denoted by $(\Omega,\mathscr{F},\PP)$  for
typographical convenience. 
Consider the partition $\Omega=\Omega_1 \cup \Omega_2$ with 
$$
 \Omega_1 : = \left \{ \omega \in  \Omega : \overline{R^\ast}(\omega) \cap (x,y) 
 = \emptyset \right \} ,  \quad
 \Omega_2 : = \left \{ \omega \in  \Omega : \overline{R^\ast}(\omega) \cap (x,y) 
  \neq \emptyset \right \} .   
$$
We will show that the required convergence holds  in probability  on both $\Omega_1$
and $\Omega_2$.

Since $\overline{R^\ast}$ does not hit fixed points, we have
$Z^{\ast\leftarrow}(y) - Z^{\ast\leftarrow}(x) =0$ $a.s.$ on
$\Omega_1$. Furthermore, we can write for
some null event $\Omega_{0,1}$, 
$$
   \Omega_1 = \Omega_{0,1}\cup \bigcup_{k\geqslant 1}
   \Omega_1^k, \quad \Omega_1^k: =
   \left\{ \rho\left( \overline{R^\ast}(\omega) , [x,y]  \right) \geqslant 1/k \right \},
$$
where $\rho$ is defined in (\ref{eq;ssMtc}). Since
$\frac{1}{n} \overline{A_n}  \longrightarrow \overline{R^s}$ a.s. in
the Fell topology, the convergence holds  
in the Hausdorff metric $\rho_{\text{H}}$ as well, so on each 
$\Omega_1^k$,
$$
 \overline{A_n} \cap \{ \lceil nx \rceil ,\ldots, \lfloor ny \rfloor \} = \emptyset
 $$
for all $n$ large enough. Hence the required convergence  holds $a.s.$ 
on $\Omega_1$. 

We now consider the event $\Omega_2$. Let  
$$
\tau_1  =  \inf \left\{ \overline{R^\ast} \cap [x,y] \right \}, \ \
\tau_2 =\sup \left\{ \overline{R^\ast} \cap [\tau_1,y] \right \} \ \ 
(\text{both} \ =y \ \
 \text{if} \ \ \overline{R^\ast} \cap [x,y]=\emptyset). 
$$
For some null event $\Omega_{0,2}$ we can write 
$$
   \Omega_2 = \Omega_{0,2}\cup \bigcup_{k\geqslant 1}
   \Omega_2^k, \quad \Omega_2^k: =
   \left\{  \tau_2(\omega)-  \tau_1(\omega)  \geqslant 1/k \right \}.
   $$
   On $\Omega_2$, by the strong Markov property,
   $$
   \text{cap} \Big(
   \overline{A_n} \cap 
   \{ \lceil nx \rceil ,\ldots, \lfloor ny \rfloor \} \Big) \eid
   \text{cap} \Big(
   A_0\Bigl(
   0,  S ^{(n)\leftarrow}_{\lfloor ny \rfloor} -
   S^{(n)\leftarrow}_{\lceil nx \rceil}\Bigr) \Big).
   $$
  Once again, since  $\frac{1}{n} \overline{A_n}  \longrightarrow
  \overline{R^s}$ $a.s.$ in the Hausdorf metric. So on 
  each $\Omega_2^k$, we have
 $S ^{(n)\leftarrow}_{\lfloor ny \rfloor} -
   S^{(n)\leftarrow}_{\lceil nx \rceil}\to \infty$ $a.s.$. It follows by Proposition
   \ref{prop;S_0} $(\rom2)$ that
$$
   \frac
   {
     \text{cap}\left( \overline{A_n} \cap 
   \{ \lceil nx \rceil ,\ldots, \lfloor ny \rfloor \}  \right) 
   }
   {
   S ^{(n)\leftarrow}_{\lfloor ny \rfloor} - S^{(n)\leftarrow}_{\lceil nx \rceil}
   }  \longrightarrow c_\infty
   $$
   in probability on each $\Omega_2^k$, hence also on the entire
   $\Omega_2$. Finally,
   \begin{align*}
&\frac{\text{cap} \left( \overline{ A_n } \cap \{ \lceil nx \rceil,\ldots, \lfloor ny \rfloor  \} \right)}
 { \vartheta_n } \\
 = &\frac{
     \text{cap}\left( \overline{A_n} \cap 
   \{ \lceil nx \rceil ,\ldots, \lfloor ny \rfloor \}  \right)}
   {S ^{(n)\leftarrow}_{\lfloor ny \rfloor} - S^{(n)\leftarrow}_{\lceil
     nx \rceil}}
   \frac{S ^{(n)\leftarrow}_{\lfloor ny \rfloor} - S^{(n)\leftarrow}_{\lceil
     nx \rceil}}{ \vartheta_n } \to c_\infty \left(
     Z^{\ast\leftarrow}(y) - Z^{\ast\leftarrow}(x) \right) 
   \end{align*}
   in probability on $\Omega_2$. 
\end{proof}

We proceed with an important lemma. Switching back to the terminology
of Subsection \ref{sec;MC}, we suppose that the random elements $\{
Y^{(k;n)} \}_{k\in \bbn}$ are defined on a probability space
$(\Omega,\mathscr{F},\PP)$, while $\{
Y^{(0;n)} \}$ is defined on a different probability space, and the
entire system is defined on the product probability space. We will use
the notation $\PP_\omega$ for the quenched (conditional) probability
(computed with respect to $\{Y^{(0;n)} \}$). When needed in the
sequel, the notion of quenched probability may change, and we will
always specify its precise meaning. 

\begin{lemma} \label{lem;ASYidp}
For any $K \in \bbn$ and $\epsilon > 0$ there exists a sequence of
events $\{ \Omega_{[K];n} ^ \epsilon \}_{n \geqslant 1}$ in $\Omega$
satisfying  
 \begin{equation} \label{eq;ASYidp0.0}
   \liminf_{n\to \infty}\PP \left \{  \Omega_{[K];n} ^ \epsilon \right
   \} >1- \epsilon, 
    \end{equation}
with the following properties.  
\begin{enumerate}[label=(\roman*)]
    
    \item   There exists $C=C(\epsilon)>0$ such that 
for all $1\leqslant k \leqslant K$  and all large $n$,  
    \begin{equation} \label{eq;ASYidp0.1}
    C^{-1} \frac{\vartheta_n}{w_n} \leqslant \overline{p}_{k;n} \leqslant C \frac{\vartheta_n}{w_n}
    \quad  \text{on } \Omega_{[K];n} ^ \epsilon . 
    \end{equation}
    
    \item  For all $1\leqslant k_1 \neq k_2 \leqslant K$, $I_{k_1;n} \cap I_{k_2;n} = \emptyset$ on $\Omega^\epsilon_{[K];n}$.

    \item  For $1\leqslant k_1 \leqslant k_2 \leqslant K$  and
      $n\geqslant 1$,  set  
    $$\overline{p}_{[k_1:k_2];n} = 
     \PP \left\{  I_{0;n} \cap ( I_{k_1;n} \cup \cdots \cup I_{k_2;n} ) \neq \emptyset 
     \, \big| \,  I_{k_1;n}  \ldots  I_{k_2;n}   \right \}. 
    $$ 
    Then
    \begin{equation} \label{eq;ASYidp0.2}
    \overline{p}_{[k_1:k_2];n} = \sum_{k=k_1}^{k_2} \overline{p}_{k;n} - o \left( \frac{\vartheta_n}{w_n} \right) \quad  \text{on } \Omega_{[K];n} ^ \epsilon . 
    \end{equation}
\end{enumerate}
\end{lemma}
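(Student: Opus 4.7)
The proof will define the good event $\Omega_{[K];n}^\epsilon$ as the intersection of two events: one ensuring that the capacities $\text{cap}(I_{k;n})$ are simultaneously comparable to $\vartheta_n$ for $k=1,\ldots,K$, and one ensuring that the rescaled sets $I_{k;n}/n$ are pairwise separated by a fixed distance $\delta>0$. The first event comes from Proposition \ref{prop;S_n}(ii) combined with truncation of the positive continuous limits $Z_k^{\ast\leftarrow}(1)$ away from $0$ and $\infty$; the second uses the joint weak convergence of $(I_{k;n}/n)_{1\leqslant k\leqslant K}$ to independent shifted $\beta$-stable regenerative sets $(\overline{R_k^\ast})$, which are almost surely pairwise disjoint when $\beta<1/2$ (two independent $\beta$-stable regenerative sets have empty intersection in this range of $\beta$). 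A Portmanteau argument then supplies a deterministic $\delta>0$ so that $\rho(I_{k_1;n},I_{k_2;n})>\delta n$ holds simultaneously for all $k_1\neq k_2$ with probability at least $1-\epsilon$, for all large $n$, yielding \eqref{eq;ASYidp0.0}.

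Part (ii) is then immediate, since $\delta n$ exceeds $(\log n)^\gamma$ for large $n$, forcing $I_{k_1;n}\cap I_{k_2;n}=\emptyset$. For part (i), the key estimate to establish is the two-sided bound
\[
\overline{p}_{k;n} \asymp \frac{\text{cap}(I_{k;n})}{w_n}
\]
uniformly on the good event. The idea is that, conditionally on $I_{k;n}=V$, the probability that the range of $\mathbf{S}^{(n)}$ hits $V$ decomposes through the strong Markov property into a sum over potential first-hit points weighted by the corresponding escape probabilities from $V$, while the effective mass of the starting distribution of $\mathbf{S}^{(n)}$ per point in $\{0,\ldots,n\}$ is of order $w_n^{-1}$. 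Combined with the bound $\text{cap}(I_{k;n})\asymp\vartheta_n$ on the good event, this yields \eqref{eq;ASYidp0.1}.

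Part (iii) is proved by inclusion-exclusion. The upper bound $\overline{p}_{[k_1:k_2];n}\leqslant\sum_{k}\overline{p}_{k;n}$ is the union bound. For the lower bound, the correction is controlled by the pairwise joint probabilities $\PP\{I_{0;n}\cap I_{j;n}\ne\emptyset,\, I_{0;n}\cap I_{l;n}\ne\emptyset\mid I_{j;n},I_{l;n}\}$ for $j\ne l$. On the good event, $I_{j;n}$ and $I_{l;n}$ are at distance of order $n$, so applying the strong Markov property at the first visit of $\mathbf{S}^{(n)}$ to either set, and invoking part (i) for the subsequent hitting probability of the other (far) set, gives a product bound of order $(\vartheta_n/w_n)^2$. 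Summing over $O(K^2)$ pairs produces a correction of $o(\vartheta_n/w_n)$, as required in \eqref{eq;ASYidp0.2}.

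The main obstacle I anticipate is the sharp two-sided capacity relation used in part (i). The initial distribution of $\mathbf{S}^{(n)}$ is size-biased via $\mu_n$ rather than uniform, so matching upper and lower bounds require careful renewal-theoretic estimates: one decomposes over the first-hit point of $V$, uses \eqref{eq;S_n-Pre0.1} to control the Green's function from typical starting locations, and identifies the resulting weighted sum as $w_n^{-1}\text{cap}(V)$ up to lower-order corrections. A secondary technical point is that the near-additivity of capacity in the flavor of Proposition \ref{prop;S_0}(i) is stated for subsets of a single range $A_0$; here one needs a variant applicable to unions of the ranges of independent walks, which can be obtained by the same argument once the well-separation on $\Omega_{[K];n}^\epsilon$ is invoked.
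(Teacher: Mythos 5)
Your construction of the good event and your treatment of parts (i) and (ii) match the paper's proof: the paper also builds $\Omega^\epsilon_{[K];n}$ from the capacity convergence of Proposition \ref{prop;S_n}(ii) (truncating $Z_k^{\ast\leftarrow}(1)$ away from $0$ and $\infty$) together with the a.s.\ disjointness of the independent shifted regenerative sets for $\beta<1/2$. One simplification you miss: the relation between $\overline{p}_{k;n}$ and capacity is not a two-sided renewal estimate requiring "careful matching of upper and lower bounds" — it is the exact identity $\overline{p}_{k;n}=\text{cap}(I_{k;n})/w_n$, obtained from the last-visit decomposition because the chain is stationary under $\mu$ with $\pi_0=1$, so each site of $\{0,\ldots,n\}$ carries exactly mass $1/w_n$ under $\mu_n$. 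Thus the "main obstacle" you anticipate in (i) does not arise. For part (iii), your inclusion–exclusion is the hitting-probability dual of what the paper does (near-additivity of capacity for well-separated sets, \eqref{eq;gap0.1}); the two are equivalent through the same last-visit decomposition. The one imprecision to fix there is that you cannot "invoke part (i)" for the second hitting probability in the cross term: part (i) concerns a walk started from the $\mu_n$-distribution, whereas after the strong Markov property you need the probability that a walk started from a \emph{fixed} point hits a set at distance at least $\delta n$. That estimate is exactly \eqref{e:esc.cond} of Proposition \ref{prop;S_0}(i), proved from the Green-function asymptotics \eqref{eq;S_n-Pre0.1} and the cardinality bound \eqref{eq;S_n-Pre0.2}; it gives a bound $o(1)$ uniformly in the starting point, which multiplied by the first-hit probability $O(\vartheta_n/w_n)$ yields the required $o(\vartheta_n/w_n)$ correction in \eqref{eq;ASYidp0.2}. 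With these two adjustments your argument is complete and coincides in substance with the paper's.
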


\begin{proof}
The Skorohod embedding argument we have just
used shows that there is a probability space (once again denoted by
$(\Omega,\mathscr{F},\PP)$) on which, for each $1\leqslant k\leqslant
K$, 
\begin{equation}\label{eq;couple0.1}
\left(  \left\{ \frac{1}{\vartheta_n} S^ { (k;n)  \leftarrow}_
    {\lfloor nt \rfloor} \right\}_{t \in \reals_+ }, \, \frac{1}{n}I_{k;n}  \right) 
   \longrightarrow \left(  \{ Z^{ \ast \leftarrow}_k
      (t) \}_{t\in \reals_+} ,\, 
    \overline{R^\ast _k}\right)    
\end{equation}
$a.s.$ in $\big( D(\bbr_+) , J_1 \big) \times  \calF([0,1])$  and 
\begin{equation} \label{eq;couple0.2}
\frac{\text{cap} \left( I_{k;n} \cap \{ \lceil nx \rceil,\ldots, \lfloor ny \rfloor  \} \right)}
    { \vartheta_n }    \longrightarrow c_\infty \left( Z^{\ast \leftarrow}_k(y) - Z^{\ast \leftarrow}_k(x) \right)    
  \end{equation}
in probability for all $0\leqslant x \leqslant y \leqslant 1$.  In the
remainder of the proof we work on this probability space. We
spell out the argument in the case $K=2$;  the general case 
can be treated similarly.  

$(\rom1)$: The last visit decomposition shows that 
$$
 \overline{p}_{k;n} =   \text{cap}(I_{k;n}) \big/ w_n, \quad k=1,2.
 $$
For $\vep>0$ choose $C_1>0$ so large that $\PP \left(
  C_1^{-1}\leqslant Z^{\ast
    \leftarrow}(1)   \leqslant C_1\right)> 1-\epsilon/2$. Letting
$$
\Omega_{[2];n}^\epsilon =\bigl\{ c_\infty C_1^{-1}\leqslant
\text{cap} ( I_{k;n})/\vartheta_n \leqslant  c_\infty C_1, \,
k=1,2\bigr\}, 
$$
we see by \eqref{eq;couple0.2} with $x=0,y=1$ that
\eqref{eq;ASYidp0.0} holds. Then  (\ref{eq;ASYidp0.1}) holds with
$C=C_1/c_\infty$. 

$(\rom2)$:  Since $\PP ( I_{1;n}  \cap I_{2;n} \neq \emptyset ) \to
0$, we can make the events   
$\Omega^\epsilon_{[2];n}$ slightly smaller so 
that  (\ref{eq;ASYidp0.0}) still holds and the condition of $(\rom2)$
also holds. 

$(\rom3)$: Recall that $\overline {R^\ast _1}$ and $\overline {R^\ast
  _2}$ intersect only on a null set. It follows from
\eqref{eq;couple0.1} that $\liminf 
       \rho\left( I_{1;n}, I_{2;n}    \right)/n>0$ a.s. on
    $\Omega^\epsilon_{[2];n}$. After removing from
    $\Omega^\epsilon_{[2];n}$ the null set, 
The claim \eqref{eq;ASYidp0.2} now  
    follows from \eqref{e:esc.cond}, where the initial point $j$ is
    the leftmost point in $I_{1;n} \cup I_{2;n}$ that is in $I_{0;n}$,
    and $A_0$ the extension to the left of that among $I_{1;n},
    I_{2;n}$ which does not contain $j$. 
 \end{proof}

\begin{lemma} \label{lem;ASYdtc}
For any $ K, m \in \mathbb{N}$ , we have
\begin{equation} \label{eq;ASYdtc0.0}
    \lim_{n\to \infty}
    \PP \left\{  \text{the numbers} \  \left\{ j_{k,i;n}
      \right\}_{\substack{1\leqslant k \leqslant K \\ 1\leqslant i
          \leqslant m}} \text{ are all different} \right \} = 1 .
\end{equation}
\end{lemma}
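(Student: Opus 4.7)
The plan is to exploit that, by construction, $j_{k,i;n}$ is strictly increasing in $i$ for each fixed $k$, so any coincidence among the numbers $\{j_{k,i;n}\}$ must involve two distinct values of $k$. A collision $j_{k_1,i_1;n}=j_{k_2,i_2;n}=j$ with $k_1\neq k_2$ forces a single set $I_{j;n}$ to intersect both $I_{k_1;n}$ and $I_{k_2;n}$, so the proof reduces to bounding, on a good event, the number of indices $j$ in the relevant range for which $I_{j;n}$ hits two of the sets $I_{1;n},\ldots,I_{K;n}$.

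First I will restrict to the event $\Omega_{[K];n}^\epsilon$ from Lemma \ref{lem;ASYidp}, on which $\overline{p}_{k;n}\asymp \vartheta_n/w_n$ uniformly in $1\leq k\leq K$ and the sets $I_{k;n}$ are pairwise disjoint. The same argument that proves Lemma \ref{lem;ASYidp}\,(iii), applied to the pair $\{k_1,k_2\}$ in place of the full range $[k_1{:}k_2]$ (and using Proposition \ref{prop;S_n}\,(i) together with Skorokhod embedding to ensure $\liminf_n \rho(I_{k_1;n},I_{k_2;n})/n>0$ on the good event, so that \eqref{e:esc.cond} can be invoked for a walk passing from one set to the other), yields
$$q_{k_1,k_2;n}:=\PP\bigl(I_{0;n}\cap I_{k_1;n}\neq\emptyset,\ I_{0;n}\cap I_{k_2;n}\neq\emptyset\,\big|\,I_{k_1;n},I_{k_2;n}\bigr)=o(\vartheta_n/w_n)$$
on the good event, for every $1\leq k_1<k_2\leq K$.

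Next I will control the size of the indices $j_{k,m;n}$. Conditional on $I_{k;n}$, the increments $j_{k,i;n}-j_{k,i-1;n}$ (with $j_{k,0;n}:=k$) are i.i.d.\ geometric with success probability $\overline{p}_{k;n}\geq c\vartheta_n/w_n$ on the good event. A standard exponential Markov bound, followed by a union bound over $k\leq K$, produces a constant $M=M(\epsilon,K,m)$ so that
$$\PP\bigl(\Omega_{[K];n}^\epsilon\cap\{j_{k,m;n}\leq Mw_n/\vartheta_n\ \text{for all}\ 1\leq k\leq K\}\bigr)\geq 1-2\epsilon$$
for all large $n$. On this intersection every collision index $j$ lies in $\{K+1,\ldots,\lfloor Mw_n/\vartheta_n\rfloor\}$, and conditionally on $(I_{k;n})_{k=1}^K$ the events $\{I_{j;n}\cap I_{k_1;n}\neq\emptyset\}\cap\{I_{j;n}\cap I_{k_2;n}\neq\emptyset\}$ are i.i.d.\ across $j$ with probability $q_{k_1,k_2;n}$. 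A union bound over $j$ and over $1\leq k_1<k_2\leq K$ bounds the conditional probability of any collision by
$$\binom{K}{2}\cdot\lfloor Mw_n/\vartheta_n\rfloor\cdot\max_{k_1<k_2}q_{k_1,k_2;n}=o(1),$$
and taking expectations and letting $\epsilon\to 0$ yields \eqref{eq;ASYdtc0.0}.

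The main obstacle will be the pairwise estimate $q_{k_1,k_2;n}=o(\vartheta_n/w_n)$ on the good event. Although this essentially follows by reproducing the argument in the proof of Lemma \ref{lem;ASYidp}\,(iii), some care is needed to transfer that argument from an entire range $[k_1{:}k_2]$ to an arbitrary pair $\{k_1,k_2\}$: one must verify, after possibly shrinking $\Omega_{[K];n}^\epsilon$ further, that $\rho(I_{k_1;n},I_{k_2;n})\geq(\log n)^\gamma$ so that the escape estimate \eqref{e:esc.cond} indeed applies to the walk traversing from $I_{k_1;n}$ to $I_{k_2;n}$. Everything else amounts to routine geometric-distribution and Markov-inequality bookkeeping.
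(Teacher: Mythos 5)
Your proposal is correct and follows essentially the same route as the paper: truncate the collision indices at $O(w_n/\vartheta_n)$ using the quenched geometric law of $j_{k,i;n}$ together with the lower bound on $\overline{p}_{k;n}$ from Lemma \ref{lem;ASYidp}\,(i), then union-bound over indices and pairs using the $o(\vartheta_n/w_n)$ estimate for the probability that a single $I_{j;n}$ hits two of the sets, which the paper extracts from Lemma \ref{lem;ASYidp}\,(iii) by inclusion--exclusion. The only difference is cosmetic: the paper spells the argument out for $K=2$, $m=1$ (where the pair coincides with the range $[1{:}2]$, so part (iii) applies verbatim), whereas you correctly note that for general $K$ one needs the pairwise analogue of (iii), which follows by the same escape-probability argument.
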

\begin{proof}
Again, we only spell out the argument in the case $K=2$ and $m=1$.
For $0<\epsilon<1$ let $\Omega_{[K];n} ^ \epsilon $ and
$C=C(\epsilon)>0$ be as in Lemma \ref{lem;ASYidp}. We have for a large $a>0$ 
\begin{align*}
    & \PP \left\{ j_{1,1;n} = j_{2,1;n} \right\} \\
    \leqslant &  \PP \left\{ j_{1,1;n} = j_{2,1;n} \leqslant aC \frac{w_n}{\vartheta_n}  
    \right\}   + \PP \left\{  j_{1,1;n}  > aC \frac{w_n}{\vartheta_n}
    \right\}.
\end{align*}

Letting now $\PP_\omega$ be the quenched probability given
$\{Y^{(1;n)} \}$, we recall that, with respect to $\PP_\omega$, $j_{1,1;n}$ is
geometrically distributed with success probability
$\overline{p}_{1;n}$. Therefore, 
\begin{align*}
  &\PP \left\{  j_{1,1;n}  > aC \frac{w_n}{\vartheta_n} \right\}=  \int_\Omega \PP_\omega \left\{ 
    j_{1,1;n} > aC \frac{w_n}{\vartheta_n}
    \right \} 
    \PP( d \omega ) \\
 \leq & \epsilon + \int_{\Omega_{[K];n} ^ \epsilon} \PP_\omega \left\{
        j_{1,1;n} > aC \frac{w_n}{\vartheta_n}     \right \}     \PP(
        d \omega ) \\
  \leq & \epsilon + \left( 1-C^{-1} \frac{\vartheta_n}{w_n}
         \right)^{aCw_n/\vartheta_n-1}
         \to\epsilon+ e^{-a}.
\end{align*}

 On the other hand, by the inclusion-exclusion formula and Lemma
 \ref{lem;ASYidp} $(\rom3)$, 
\begin{align*}
&\PP \left\{ j_{1,1;n} = j_{2,1;n} \leqslant aC \frac{w_n}{\vartheta_n}  
                 \right\}   \\
\leqslant &\epsilon +  \frac{aCw_n}{\vartheta_n} \int_{\Omega^\epsilon_{[2];n}}
    \PP_\omega \left\{ I_{0;n} \cap I_{1;n} \neq \emptyset, \, 
     I_{0;n} \cap I_{2;n} \neq \emptyset
    \right \}
            \PP (d \omega) \\
 \leqslant &\epsilon +  \frac{aCw_n}{\vartheta_n}  \sup_{\omega \in
             \Omega^\epsilon_{[2];n} } \PP_\omega \left\{ I_{0;n} \cap I_{1;n} \neq \emptyset, \, 
     I_{0;n} \cap I_{2;n} \neq \emptyset
    \right \} \to\epsilon. 
\end{align*}
Letting first $\epsilon\to 0$ and then $a\to\infty$ concludes the
argument. 
\end{proof}

\begin{proof}[Proof of Theorem \ref{thm;4joint}]

For notational simplicity we consider the case $K=m=2$. Our method
easily carries over to arbitrary $K$ and $m$. We will once again use
the Skorohod embedding and assume that \eqref{eq;couple0.1} and
\eqref{eq;couple0.2} hold. Then $I_{k;n}/n\to \overline{R^*_k}  $
$a.s.$ and by Proposition \ref{prop;S_n} $(\rom2)$, $w_n
\overline{p}_{k;n}/\vartheta_n\to c_\infty Z^{*\leftarrow}_k (1)$
$a.s.$  as well. We consider now the remaining components in
\eqref{eq;4joint0.0}. 

Let $\PP_\omega$ be the quenched probability given
$\{Y^{(k;n)} \}, \, k=1,2$. To handle the second component in
\eqref{eq;4joint0.0}, it is enough to show  that, for $a.s.$ $\omega \in \Omega$,
\begin{equation} \label{e:Gammas}
     \left( j_{1,1;n}  \overline{p}_{1;n},  j_{2,1;n}  \overline{p}_{2;n}
 \right)     \Rightarrow (\Gamma_{1,1}, \Gamma_{2,1}) 
\end{equation}
under $\PP_\omega$. For $0<\epsilon<1$, let $\Omega^\epsilon_{[2];n}$ be
the event in Lemma \ref{lem;ASYidp}.  Let $x_1,x_2>0$. On  
$\Omega^\epsilon_{[2];n}$, for any subsequence $(n_m)$ over which
$\left(\overline{p}_{2;n_m}  \right) ^{-1}  x_2
 \geqslant \left(\overline{p}_{1;n_m}  \right) ^{-1}  x_1 $ we have 
\begin{align*}
    & \PP_{\omega}  \left\{ j_{1,1;n_m} \overline{p}_{1;n_m} \geqslant x_1,\,  j_{2,1;n_m} \overline{p}_{2;n_m}  \geqslant x_2    \right \} \\
    = &  \left(  1 - \overline{p}_{ [1:2];n_m } \right) 
    ^ { \lfloor (\overline{p}_{1;n_m})^{-1} x_1\rfloor -  1}  \cdot 
     \left(  1 - \overline{p}_{2;n_m } \right) 
    ^ { \lfloor (\overline{p}_{2;n_m})^{-1} x_2-  (\overline{p}_{1;n_m})^{-1} x_1\rfloor } \\
   = & 
   \left( 
   \prod_{k=1}^2( 1- \overline{p}_{k;n_m}) + o\left( \frac{\vartheta_{n_m}}{w_{n_m}} \right)
   \right)
   ^ {\lfloor (\overline{p}_{1;n_m})^{-1} x_1\rfloor -   1} \\
   & \cdot 
    \left(  1 - \overline{p}_{2;n_m } \right) 
    ^ { \lfloor(\overline{p}_{2;n_m})^{-1} x_2 - (\overline{p}_{1;n_m})^{-1} x_1 \rfloor}
   \\
   = & 
   \bigl( 1+o(1)\bigr)
   \prod_{k=1}^{2} 
   \left( 
     1- \overline{p}_{k;n_m}  + o\left( \frac{\vartheta_{n_m}}{w_{n_m}} \right)
   \right)
   ^ { (\overline{p}_{k;n_m})^{-1} x_k }\to e^{-(x_1+x_2)}
\end{align*}
as $m\to \infty$.  The same is true for any subsequence $(n_m)$ over which
$\left(\overline{p}_{2;n_m}  \right) ^{-1}  x_2
< \left(\overline{p}_{1;n_m}  \right) ^{-1}  x_1 $. We thus see that
$$
\PP_{\omega}  \left\{ j_{1,1;n_m} \overline{p}_{1;n_m} \geqslant
  x_1,\,  j_{2,1;n_m} \overline{p}_{2;n_m}  \geqslant x_2    \right \}
\to e^{-(x_1+x_2)}
$$
over $\liminf \Omega^\epsilon_{[2];n}$ for every $0<\epsilon<1$ and,
hence, also on an event of probability 1. Since this is true for all
$x_1,x_2>0$, \eqref{e:Gammas} follows. 

We now consider the last component in \eqref{eq;4joint0.0}. By Lemma
\ref{lem;ASYdtc} we only need to  prove the following
statement. Consider $\ell$ disjoint open 
intervals in $(0,1)$,  $\{ B_i = (x_i,y_i):i=1,\ldots,\ell\}$. Then
for any  $\epsilon , \delta >0$, there exists a sequence of events $\{
\Omega^\prime_n \}_{n \in \bbn} $ in $\Omega$ such that  
     $\liminf_{n\to \infty} \PP\{\Omega_n ^ \prime \} \geqslant 1 - \epsilon$ and 
\begin{equation} \label{eq;4joint2.1}
   \sup_{ \omega \in \Omega^ \prime _n  }
  \left|  \PP_{\omega} \left \{ \bigcap_{r=1}^\ell \left\{ \frac{1}{n} I_{1,1;n} \cap B_r \neq \emptyset \right \} \right\} - \PP_{1} \left\{ \bigcap_{r=1}^\ell \left\{ J_{1,1} \in B_r  \right \} \right \} \right| \leqslant \delta  
\end{equation}
where $\PP_\omega$ is the quenched probability given $Y^{(1;n)}$ and
$\PP_1$ is the probability associated with an independent standard
uniform random variable.  We treat the cases $\ell = 1$ and $\ell
\geqslant 2$ separately.

Suppose first that  $\ell = 1$. By \eqref{eq;HdfMea*}, 
$$
\PP_{1} \left\{ {J_{1,1}} \in  B_1  \right \}  = 
\frac{
Z^{\ast \leftarrow}_1 (y_1) - Z^{\ast \leftarrow}_1 (x_1)   } {
Z^{\ast \leftarrow}_1 (1) },
$$
while by the last exit decomposition, 
$$
 \PP_{\omega} \left\{  \frac{I_{1,1;n}}{n}  \cap B_1 \neq \emptyset  \right \} 
    =  \frac{\text{cap}\bigl( I_{1;n}(\omega) \cap (nx_1, ny_1) \bigr)}{\text{cap}(
      I_{1;n} (\omega) )}  .
    $$
Therefore, we can take $\Omega^\prime_n = \Omega$ and 
(\ref{eq;4joint2.1}) follows by \eqref{eq;couple0.2}. 

If $\ell \geqslant 2$,  then the second probability in
(\ref{eq;4joint2.1}) vanishes. Furthermore, 
\begin{align*}
    & \PP_\omega  \left\{ I_{1,1;n}\cap n B_1 \neq \emptyset, \ldots, 
    I_{1,1;n}\cap B_\ell \neq \emptyset \right \} \\
    \leqslant & \PP_\omega \left\{ I_{1,1;n}\cap n B_1 \neq \emptyset, 
    I_{1,1;n}\cap n B_2 \neq \emptyset \right \}. 
\end{align*}
Letting $\delta =
x_2-y_1>0$ we have by the strong Markov property,
\begin{align*}
    & \PP \left\{ I_{1,1;n}\cap n B_1 \neq \emptyset, 
      I_{1,1;n}\cap n B_2 \neq \emptyset \right \} \\
     \leqslant &  \PP \left\{ A_0, \widetilde{A}_0 \ \
                 \text{have a common point} >n\delta \right\} \to 0
\end{align*}
as $n\to\infty$ by \eqref{eq;S_n-Pre0.1}, we immediately obtain
(\ref{eq;4joint2.1}). 
\end{proof}

\begin{proof}[Proof of Proposition \ref{prop;1,1card}]

$(\rom1)$: The claim (\ref{eq;1,1card0.1}) follows from the obvious
fact that $$ \# I_{1,1;n}\leqslant\#(A_0 \cap
\widetilde{A}_0)$$ and the latter cardinality 
has the geometric distribution with success probability
$1-c_\infty$ (with $c_\infty$ defined in \eqref{eq;c_infty}).

$(\rom2)$: The claim follows from $(\rom1)$ by the Markov inequality.
\end{proof}

\section{Additional auxiliary results}  \label{sec;AppB}

This section contains several auxiliary results that are essential in the main proofs.  
We start with
describing certain useful properties of the functions $V$ and $h$ in
\eqref{eq;aux-h}  and some related functions. Let 
\begin{align} 
    &  G(x)  = \left( 1 \Big/  \gamma \overline{H} \right) ^
      \leftarrow (x),  \quad x \geqslant 1/\gamma;   
    \label{eq;V&G}  
\end{align}
notice that by the inverse function theorem,
 \begin{equation} \label{eq;Gprime}
    x G^\prime(x)  = h \circ G(x).
  \end{equation}
Furthermore, by the Karamata theorem the function
$$
x\mapsto \int_1 ^ x \frac{du}{u^{1-\alpha} L_\alpha (u)}
$$
is regularly varying at infinity with exponent $\alpha$, so its inverse
is regularly varying with exponent $1/\alpha$. Therefore, the function
\begin{equation} \label{eq;scr(L)}
    \mathscr{L}(x) =
    x^{-1/\alpha}\left(  \int_1 ^ x \frac{du}{u^{1-\alpha} L_\alpha (u)} \right) ^ \leftarrow 
  \end{equation}
is slowly varying. 
\begin{proposition} \label{prop;VanGogh}
The functions $V$, $h$, $G$ and $\mathscr{L}$ have the following
properties at infinity:
\begin{align}
  & G(x) - V(x) = o\big( h \circ G(x)  \big).  \label{eq;asyV&G0.2} \\
    & V(x) \sim    G (x) \sim
    (\log x ) ^{ 1/\alpha } \mathscr{L} (\log x), 
    \label{eq;asyV&G0.0}  \\
    & h \circ V(x) \sim h \circ G(x) \sim \alpha^{-1} (\log x)^{1/\alpha - 1}
      \mathscr{L}(\log x)   \label{eq;asyV&G0.1}.
\end{align}
Furthermore, for every $t>0$ the function $V$ satisfies
\begin{equation} \label{e:pi.var}
\lim_{x\to\infty} \frac{V(tx)-V(x)}{h\circ V(x)} =\log t. 
\end{equation}
\end{proposition}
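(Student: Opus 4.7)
The plan is to build the four conclusions in order, starting with the explicit function $G$ (which is the tractable one) and then transferring the asymptotics to $V$ and $h \circ V$ via the tail equivalence $\overline{\nu}\sim \gamma\overline{H}$.

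First I would read off an exact expression for $G$. Since $\overline{H}(x) = \exp\{-\int_1^x du/(u^{1-\alpha}L_\alpha(u))\}$, the definition of $G$ gives the identity
\begin{equation*}
\int_1^{G(x)} \frac{du}{u^{1-\alpha}L_\alpha(u)} = \log(\gamma x).
\end{equation*}
Combined with the definition \eqref{eq;scr(L)} of $\mathscr{L}$, this yields the exact formula $G(x) = (\log(\gamma x))^{1/\alpha}\mathscr{L}(\log(\gamma x))$, which together with the slow variation of $\mathscr{L}$ gives the $G$-part of \eqref{eq;asyV&G0.0}. For $h\circ G$, Karamata's theorem applied to the slowly varying $L_\alpha$ gives $\int_1^y du/(u^{1-\alpha}L_\alpha(u)) \sim y^\alpha/(\alpha L_\alpha(y))$, so plugging in $y = G(x)$ forces $G(x)^\alpha/L_\alpha(G(x)) \sim \alpha\log x$. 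Then
\begin{equation*}
h\circ G(x) = G(x)^{1-\alpha}L_\alpha(G(x)) = \frac{G(x)}{G(x)^\alpha/L_\alpha(G(x))} \sim \frac{G(x)}{\alpha\log x},
\end{equation*}
which gives the $G$-part of \eqref{eq;asyV&G0.1}. (This also matches \eqref{eq;Gprime} up to the logarithmic derivative, and confirms $h\circ G(x) = o(G(x))$.)

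Next I would control the deviation $V(x)-G(x)$. From $\overline{\nu}\sim \gamma\overline{H}$ and the inverse definitions one obtains $\overline{H}(V(x))/\overline{H}(G(x)) \to 1$, i.e.\ $\int_{G(x)}^{V(x)} du/(u^{1-\alpha}L_\alpha(u)) = o(1)$. Suppose for contradiction that along some subsequence $|V(x)-G(x)| \geqslant \delta\, h(G(x))$ for some $\delta>0$; since $h(G(x)) = o(G(x))$, on the interval between $G(x)$ and $V(x)$ we have $u \sim G(x)$ and hence $h(u)\sim h(G(x))$, so the integral is asymptotically at least $\delta$ in absolute value, a contradiction. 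This proves \eqref{eq;asyV&G0.2}, and therefore $V(x)\sim G(x)$. Because $h$ is regularly varying of index $1-\alpha$ and $L_\alpha$ is slowly varying, the relation $V(x)\sim G(x)\to\infty$ propagates to $h\circ V(x)\sim h\circ G(x)$, completing \eqref{eq;asyV&G0.0} and \eqref{eq;asyV&G0.1}.

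For the $\pi$-variation \eqref{e:pi.var}, I would work first with $G$. Writing $G(tx)-G(x)=\int_x^{tx} G'(u)\,du$ and using \eqref{eq;Gprime},
\begin{equation*}
G(tx)-G(x) = \int_x^{tx}\frac{h\circ G(u)}{u}\,du.
\end{equation*}
The asymptotic $h\circ G(u) \sim \alpha^{-1}(\log u)^{1/\alpha-1}\mathscr{L}(\log u)$ is a slowly varying function of $u$, so $h\circ G(u)\sim h\circ G(x)$ uniformly in $u\in[x,tx]$, which gives $G(tx)-G(x)\sim h\circ G(x)\log t$. Applying \eqref{eq;asyV&G0.2} twice (to $tx$ and to $x$), together with $h\circ G$ slowly varying so $h\circ G(tx)\sim h\circ G(x)$, we replace $G$ by $V$ at cost $o(h\circ G(x))$, and divide by $h\circ V(x)\sim h\circ G(x)$ to conclude.

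The only delicate step is the second paragraph, where I must argue $V(x)-G(x) = o(h\circ G(x))$ from the weaker information $\overline{\nu}\sim\gamma\overline{H}$; this is where the assumption \eqref{eq;LevyMea0.1} enters implicitly (through Karamata applied to $L_\alpha$) and where one must use $h\circ G(x)=o(G(x))$ to convert the integrated error into a pointwise one. Everything else is routine manipulation of slowly varying functions.
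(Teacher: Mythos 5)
Your proposal is correct and follows essentially the same route as the paper: derive the exact formula $G(x)=(\log(\gamma x))^{1/\alpha}\mathscr{L}(\log(\gamma x))$, apply Karamata's theorem to get the asymptotics of $h\circ G$, and transfer everything from $G$ to $V$ via \eqref{eq;asyV&G0.2}, including the $\pi$-variation statement. The only difference is that where the paper disposes of \eqref{eq;asyV&G0.2} by citing (2.4) of \cite{chen:samorodnitsky:2020} (the standard Gumbel-domain fact recorded as \eqref{e:small.diff}), you supply a correct self-contained argument via the integral $\int_{G(x)}^{V(x)} du/h(u)=o(1)$ and a contradiction using $h\circ G(x)=o(G(x))$.
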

\begin{proof}
The statement (\ref{eq;asyV&G0.2}) follows from the properties of the
tails in the Gumbel domain of attraction;
see $e.g.$ (2.4) in \cite{chen:samorodnitsky:2020}. This now implies the
first asymptotic equivalencies in \eqref{eq;asyV&G0.0} and
\eqref{eq;asyV&G0.1}. Since
$$
G(x)=\bigl( \log(x\gamma) \bigr)^{1/\alpha} \mathscr{L}\bigl(
\log(x\gamma) \bigr),
$$
the second asymptotic equivalence in \eqref{eq;asyV&G0.0} follows from
the regular variation. Further, by Karamata's theorem,
$$
L_\alpha\bigl(G(x)\bigr)
\sim \bigl(\mathscr{L}(\log
x)\bigr)^\alpha/\alpha,
$$
and the second asymptotic equivalence in \eqref{eq;asyV&G0.1} follows
as well.

The version of the statement \eqref{e:pi.var} with $V$ replaced by $G$
follows easily from the definition of $G$, and by \eqref{eq;asyV&G0.2}
we may replace $G$ by $V$.
\end{proof}

We proceed with two lemmas used in the proof of Proposition
\ref{prop;bottom}. The first lemma is purely analytical, and we omit a
straightforward proof. 

\begin{lemma} \label{lem;psi}
\begin{enumerate}[label=(\roman*)]
\item The function 
$$
\psi(r) =  \frac{(1-\beta)^{1/\alpha} + \beta ^{1/\alpha}}{(1-\beta -
  r) ^{1/\alpha}}-1, \ 0\leq r<1-\beta
$$
is  increasing to infinity. Furthermore, the numbers $r_m$ defined by
$\psi(r_m)=m$ satisfy $r_m < m/(m+1)-\beta$  for $m\geqslant 1$. 
\item The function
  \begin{align*}
  &\widetilde{\psi}(r) =    (1-\beta - r) \bigl[ \lfloor \psi(r)
    \rfloor    + \bigl( \psi(r)- \lfloor \psi(r)
    \rfloor\bigr)^\alpha\bigr], \ 0\leq r<1-\beta 
    \end{align*}
 is increasing  and satisfies $\widetilde{\psi}(r) > r+\beta$ on $(0,1-\beta)$. Finally,
    $\widetilde{\psi}(r) \to \infty$ as  $r\to (1-\beta)^-$. 
\end{enumerate}
 \end{lemma}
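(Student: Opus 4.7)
The plan is to reduce both parts to elementary one-variable calculus. First I would introduce the substitutions $u := 1-\beta-r$ and $c := (1-\beta)^{1/\alpha} + \beta^{1/\alpha}$, so that $\psi(r) = c u^{-1/\alpha} - 1$ and $1-\beta - r_m = c^\alpha (m+1)^{-\alpha}$. Monotonicity and divergence of $\psi$ are then immediate. The bound $r_m < m/(m+1) - \beta$ becomes $c^\alpha > (m+1)^{\alpha-1}$, which, since $(m+1)^{\alpha-1}$ is decreasing in $m$ (because $\alpha<1$), need only be verified at $m=1$: it reduces to $\tfrac12\bigl[(1-\beta)^{1/\alpha}+\beta^{1/\alpha}\bigr] > (1/2)^{1/\alpha}$, which follows from strict Jensen's inequality applied to the convex map $x\mapsto x^{1/\alpha}$, using $\beta \neq 1-\beta$ (since $\beta<1/2$).

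For part (ii), on each interval $(r_m,r_{m+1})$, on which $\lfloor \psi(r)\rfloor = m$, I plan to rewrite
\begin{equation*}
\widetilde{\psi}(r) = m u + \bigl(c - (m+1)u^{1/\alpha}\bigr)^\alpha
\end{equation*}
and differentiate in $u$. Setting $v := u^{1/\alpha}$, a short computation gives
\begin{equation*}
\frac{d\widetilde{\psi}}{du} = m - (m+1)\left(\frac{v}{c-(m+1)v}\right)^{1-\alpha}.
\end{equation*}
On this sub-interval $v \in (c/(m+2), c/(m+1))$, so $v > c-(m+1)v$, which makes the bracket exceed $1$ and the derivative strictly negative. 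Since $u$ decreases as $r$ increases, $\widetilde{\psi}$ is strictly increasing in $r$ on each sub-interval; a one-line check at each join $r_m$ (both one-sided limits equal $m(1-\beta-r_m)$) then upgrades this to global monotonicity.

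For the inequality $\widetilde{\psi}(r) > r + \beta$, rewriting in $u$ reduces it to $h(u) := (m+1)u + (c-(m+1)u^{1/\alpha})^\alpha > 1$ on the corresponding $u$-range. At the two endpoints $u=c^\alpha/(m+1)^\alpha$ and $u=c^\alpha/(m+2)^\alpha$ the value of $h$ collapses respectively to $c^\alpha(m+1)^{1-\alpha}$ and $c^\alpha(m+2)^{1-\alpha}$; both exceed $1$ by part (i), and the same derivative calculation shows $h$ is monotone in $u$ on each piece, forcing $h > 1$ throughout. At $r = 0$ a direct check gives $\widetilde{\psi}(0) = \beta$, i.e., only equality, but the claim asks for the inequality on the open interval. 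Finally, for divergence as $r \to (1-\beta)^-$, the crude bound $\widetilde{\psi}(r) \geq u \lfloor \psi(r) \rfloor \geq c u^{1-1/\alpha} - 2u$ blows up since $1 - 1/\alpha < 0$.

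The whole argument is essentially bookkeeping of which sub-interval $r$ lies in; the one mildly substantive step is the reduction of the second assertion in (i) to strict Jensen, which is why the paper is content to omit the proof as ``straightforward''.
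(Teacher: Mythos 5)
Your proof is correct in substance. The paper omits the proof of this lemma entirely (dismissing it as ``purely analytical''), so there is no argument of the authors' to compare against; your computation supplies exactly the missing details, and the reductions (the substitution $u=1-\beta-r$, the identity $\widetilde{\psi}=mu+(c-(m+1)u^{1/\alpha})^\alpha$ on each piece, the Jensen step giving $c^\alpha>(m+1)^{\alpha-1}$, the continuity check $\widetilde\psi(r_m\pm)=m(1-\beta-r_m)$ at the joins, and the divergence bound) all check out.

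The one place your write-up is imprecise is the first sub-interval $(0,r_1)$, i.e.\ $m=0$. There the relevant endpoint in the $u$-variable is $u=1-\beta$ (coming from the convention $r_0=0$ used in the paper), not $u=c^\alpha/(m+1)^\alpha=c^\alpha$, which lies outside the domain because $c^\alpha>1-\beta$. Moreover $c^\alpha<1$, since $c=(1-\beta)^{1/\alpha}+\beta^{1/\alpha}<(1-\beta)+\beta=1$ for $1/\alpha>1$; so the assertion that ``both endpoint values exceed $1$ by part (i)'' is false for $m=0$ (part (i) only gives $c^\alpha>(m+1)^{\alpha-1}$ for $m\geqslant 1$). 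The fix is the one you already have in hand: $h(1-\beta)=1$ exactly (your observation that $\widetilde{\psi}(0)=\beta$), and your derivative computation shows $h$ is \emph{strictly} decreasing in $u$ on $(u_1,1-\beta)$, whence $h(u)>h(1-\beta)=1$ on the open interval. With that sentence made explicit, the argument is complete.
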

The next lemma is essential for Proposition \ref{prop;bottom}. 
\begin{lemma} \label{lem;CvxAls}
Fix any $m \in \bbn_0$. 
\begin{enumerate} [label=(\roman*)]
    \item For  $1< b<\infty$ such that
      $\nu ( 1,b)>0$ let $\{
      \xi_i \}_{i \in \bbn}$ be   $i.i.d.$ 
    random variables whose law is the restriction of $\nu$ to 
    $(1, b) $ normalized to a probability
    measure. Then for any $m\in \bbn_0$  there are
    $c_m , \gamma_m \geqslant 0$  depending on $m$ only  such that for
    any     $y\in \bigl( mb ,  (m+1)b\bigr]$     and $d\geqslant m+1$,   
\begin{equation} \label{eq;CvxAls0.0}
    \PP \left\{ \sum_{i=1}^d \xi_i \geqslant y \right \}
    \leqslant c_m ^ d b^ {\gamma_m } \big( \overline{H} ( b)  \big)  ^ m  \overline{H} ( y - mb ) .
\end{equation}
\item For $0<r<1-\beta$ consider sequences 
$z_n = V(w_n) + V(\vartheta_n) - V(w_n  / n ^ r) - o \left(  V(w_n)
\right)$ and $\overline{z}_n = V(w_n  / n ^ r)$, $n\geqslant 1$. 
Let $(r_m)$ be as in Lemma \ref{lem;psi}. Then for any
  $m\geqslant 1$ there is $\gamma_m>0$ such that for any $r\in
  (r_m,r_{m+1}]$,  
\begin{equation} \label{eq;CvxAls0.1}
\lim_{n\to \infty}
\frac
{
\PP \left\{ \sum_{ \Gamma_j > n^r } 
V\left(  w_n  \big/ \Gamma_j \right) 1_{\{ 0 \in I_{j;n} \}   } \geqslant z_n
\right \}
}
{
\overline{z}_n ^ {\gamma_m} \big( \overline{H}(\overline{z}_n)   \big) ^ m   \overline{H}( z_n -m \overline{z}_n)
} =0. 
\end{equation}
 \end{enumerate}
\end{lemma}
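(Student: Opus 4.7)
The plan is to establish part $(\rom1)$ first as a stand-alone technical bound and then derive part $(\rom2)$ from it via a compound-Poisson representation and change of variables.

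For part $(\rom1)$, the key input is the log-convexity of $\overline{H}$ that follows from \eqref{eq;x0}: since $x \mapsto 1/(x^{1-\alpha}L_\alpha(x))$ is eventually decreasing, $-\log \overline{H}$ is concave and hence $\log\overline{H}$ is convex. For any configuration $x_1, \ldots, x_{m+1} \in (1, b]$ with $\sum x_i = y$, Jensen's inequality with weights $m/(m+1)$ and $1/(m+1)$ gives $\overline{H}(b)^m \overline{H}(y - mb) \geq \overline{H}(y/(m+1))^{m+1} \geq \prod_{i=1}^{m+1} \overline{H}(x_i)$, identifying $(b,\ldots,b, y-mb)$ as the ``dominant'' configuration. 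I would then bound $\PP(\sum_{i=1}^d \xi_i \geq y)$ by conditioning on which $m+1$ order statistics carry the large contributions, using the tail estimate $\PP(\xi_1 > t) \leq C\, \overline{H}(t)/\overline{H}(1)$ for $t \in (1, b)$, and integrating out the remaining coordinates. The binomial factor $\binom{d}{m+1}$ is absorbed into $c_m^d$ (choosing $c_m$ large), and slowly-varying corrections in passing from $\overline{H}(y/(m+1))^{m+1}$ to $\overline{H}(b)^m \overline{H}(y-mb)$ are absorbed into $b^{\gamma_m}$. A short induction on $m$ keeps all the constants uniform.

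For part $(\rom2)$, I would use Poisson thinning and a change of variables. The indicators $\one(0 \in I_{j;n})$ are i.i.d.\ across $j$ with $\PP(0 \in I_{j;n}) = \mu_n\{Y_0 = 0\} = \pi_0 / w_n = 1/w_n$, and they are independent of $(\Gamma_j)$. Hence $\{\Gamma_j > n^r: 0 \in I_{j;n}\}$ is a Poisson process on $(n^r, \infty)$ with rate $1/w_n$. Substituting $\xi = V(w_n/\gamma)$ so that $|d\gamma| = w_n\,|d\overline{\nu}(\xi)|$, the transformed process on the $\xi$-scale is a Poisson point process on $(1, \overline{z}_n)$ with intensity measure $\nu$. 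Therefore
\begin{equation*}
\sum_{\Gamma_j > n^r} V(w_n/\Gamma_j)\, \one(0 \in I_{j;n}) \eid \sum_{k=1}^N \xi_k,
\end{equation*}
where $N$ is Poisson with mean $\lambda_n := \nu((1, \overline{z}_n)) \leq \overline{\nu}(1) < \infty$ and, conditionally on $N = d$, the $\xi_k$ are i.i.d.\ with the law of $\nu$ restricted to $(1, \overline{z}_n)$ and normalized.

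Recall that $z_n/\overline{z}_n \to \psi(r) \in (m, m+1]$ for $r \in (r_m, r_{m+1}]$, so for all large $n$ the pair $(b, y) = (\overline{z}_n, z_n)$ fits the hypotheses of part $(\rom1)$; moreover, for $d \leq m$ we have $\sum_{k=1}^d \xi_k \leq d\,\overline{z}_n < z_n$, so the conditional probability vanishes. Combining part $(\rom1)$ with the Poisson weights $e^{-\lambda_n}\lambda_n^d/d!$ for $d \geq m+1$ (and $\lambda_n$ bounded) yields
\begin{equation*}
\PP\biggl(\sum_{\Gamma_j > n^r} V(w_n/\Gamma_j)\, \one(0 \in I_{j;n}) \geq z_n\biggr) \leq C\, \overline{z}_n^{\gamma_m^\prime}\, \overline{H}(\overline{z}_n)^m\, \overline{H}(z_n - m\overline{z}_n),
\end{equation*}
where $\gamma_m^\prime$ is the exponent produced by part $(\rom1)$. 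Taking the exponent $\gamma_m$ in \eqref{eq;CvxAls0.1} strictly larger than $\gamma_m^\prime$ makes the denominator dominate the numerator by a factor of $\overline{z}_n^{\gamma_m - \gamma_m^\prime} \to \infty$, which delivers the claimed limit. The main obstacle is the bookkeeping in part $(\rom1)$: converting the log-convexity identification of the extremal configuration into a clean uniform bound requires careful control of the normalizer $\nu((1, b))$ (which converges to $\overline{\nu}(1)$ as $b \to \infty$ but must stay bounded below), of the slow-variation corrections incurred by $L_\alpha$, and of the combinatorial factor $\binom{d}{m+1}$ against the exponential allowance $c_m^d$.
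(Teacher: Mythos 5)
Your part $(\rom2)$ is essentially the paper's argument: the same thinning/change-of-variables identification of the sum as a compound Poisson variable $\sum_{i=1}^{N_n}\xi_i$ with $\xi_i$ drawn from $\nu$ restricted to $(x_0,\overline z_n)$, the same observation that $z_n/\overline z_n\to\psi(r)\in(m,m+1]$ forces $d\geqslant m+1$, the same summation of part $(\rom1)$ against the Poisson weights $\lambda_n^d/d!$, and the same trick of enlarging $\gamma_m$ by a fixed amount so that the ratio in \eqref{eq;CvxAls0.1} tends to $0$ because $\overline z_n\to\infty$. No complaints there.

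Part $(\rom1)$ is where the work lies, and your sketch has two concrete problems. First, your Jensen chain is backwards: concavity of $q=-\log\overline H$ gives $\sum_i q(x_i)\leqslant (m+1)\,q\bigl(y/(m+1)\bigr)$, i.e.\ $\prod_i\overline H(x_i)\geqslant \overline H\bigl(y/(m+1)\bigr)^{m+1}$, which is the opposite of the inequality you assert. The correct justification that $(b,\ldots,b,y-mb)$ is the dominant configuration is not via the equal-split point at all: it is that the minimum of the concave function $(z_i)\mapsto\sum_i q(z_i)$ over the polytope $\{\sum z_i\geqslant y,\ 0< z_i\leqslant b\}$ is attained at a vertex, which for $y\in(mb,(m+1)b]$ is (a permutation of) $z_1=\cdots=z_m=b$, $z_{m+1}=y-mb$. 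Second, the reduction "condition on which $m+1$ order statistics carry the large contributions and integrate out the rest" does not obviously close for general $d\geqslant m+1$: when $d$ is large the remaining $d-m-1$ coordinates can carry a macroscopic fraction of $y$, so the top $m+1$ order statistics need not exceed anything close to $y$, and the binomial factor is the least of the difficulties. The paper instead runs a double induction — on $m$, and within each $m$ on $d$ — whose base uses the subexponential convolution bound $\overline{H^{*d}}(y)\leqslant c_0^d\,\overline H(y)$ (Proposition 4.1.10 of \cite{samorodnitsky:2016:SPLRD}) and whose inductive step splits the last convolution integral at $y-mb$, applying the inductive hypothesis in $d$ on one piece and in $m$ on the other. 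Some version of that bookkeeping (or another device controlling the spread-out configurations) is genuinely needed; as written, your part $(\rom1)$ is an outline of the right idea rather than a proof.
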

\begin{proof} 
$(\rom1)$: Recall that
  $\overline{H}(x)=\exp\{-q(x)\}$ for $q\geq x_0$ with $q$ increasing
  and concave, and $xq^\prime(x)\leqslant q(x)$ for   $x\geqslant
  x_1$, for some $x_1\geqslant x_0$. Therefore, we
  can extend $q$  in the obvious way from the range $[x_1,\infty)$ 
  to an increasing and concave function on   $[0,\infty)$, that
  vanishes at the origin.   We work with this redefined
  $H$, while keeping the original notation $H$.  
There clearly is $C\geq 1$ so that
$\PP\{ \xi_1 > x \} \leqslant C \overline{H}(x)$ for all $x>0$. 
Since $\overline{H}$  is the tail of a subexponential
distribution, there is $c_0>0$ such that, in the usual notation for
the convolution power, for all $y>0$ 
\begin{equation} \label{eq;CvxAls1.0}
  \PP  \left\{ \sum_{i=1}^d  \xi_i \geqslant y \right \} \leqslant C^d
  \overline{H^{*d}} (y)    \leqslant  c_0 ^ d  \overline{H}(y) \quad
  \text{for all } d \in \bbn,  
\end{equation}
see  Proposition 4.1.10 in \cite{samorodnitsky:2016:SPLRD}. This gives
(\ref{eq;CvxAls0.0}) in the case of $m=0$ and all $d\geqslant 1$
(with $\gamma_0=0$).

We proceed in the inductive manner. Assume that (\ref{eq;CvxAls0.0})
holds for all 
  $0\leqslant m \leqslant  m_0$ and all $d\geq m + 1$. We first consider
  the case $m=m_0+1$ and $d=m+1$. Let $ H_b$ be the restriction of
  $H$ to $(1,b)$. We still have 
  $$\PP\{ \xi_1 > x \} \leqslant (C /\|H_b\|)
  \overline{ H_b}(x) \quad \text{for all } x>0.$$
  Therefore,  for $y>0$
 \begin{align*}
   \PP&\left\{ \sum_{i=1}^{d}  {\xi}_i \geqslant  y  \right\}
   \leqslant  (C /\|H_b\|)^d   \overline{H_b^{*d}} (y)  \\
    = & (C /\|H_b\|)^d \int_{(0,b)^{d}}
    1_{ \left\{ \sum_{i=1}^{d} z_i > y  \right\} }
    \prod_{i=1}^{d}
    \exp \{ - q(z_i) \} 
    q^\prime(z_i)    d z_i \\
    \leqslant &  (C /\|H_b\|)^d \bigl(q(b) \big) ^ {m+1} \\
    & \cdot 
    \exp\left\{ - \inf \left\{  \sum_{i=1}^{m+1} q(z_i) : \sum_{i=1}^{m+1}
                z_i > y,\, 0 < z_1,\ldots, z_{m+1} 
    \leqslant b  \right  \}  \right \}.
 \end{align*}
 
Since $q(\cdot)$ is increasing and concave, for $y\in \bigl(mb,
(m+1)b\bigr]$, the  infimum is achieved at, say, $z_1=\cdots=z_m=b$,
$z_{m+1} =y-mb$. Since for $b>1$, $q(b)\leq C_1b^{2\alpha}$ for some
$C_1>0$,  this establishes
  (\ref{eq;CvxAls0.0}) in the case $d=m+1$ with $\gamma_m$ 
and $c_m$ that must be at least $2\alpha$ and 
  $CC_1$, correspondingly. Their final values will be set in the
  sequel. 
  
  We continue to induct on $d$ while keeping the same 
  $m$.  Assume, therefore, that (\ref{eq;CvxAls0.0}) is valid for  
$d=m+1,\ldots,m+\ell$, some $\ell \geqslant 1$. In the case
$d=m+\ell+1$ write for $y\in \bigl( mb,(m+1)b\bigr]$, 
in the obvious notation 
\begin{align*}
    \PP\left\{ \sum_{i=1}^{d}  {\xi}_i \geqslant y  \right\}
    = & \int_{1} ^ {b} 
    F_{{\xi}} ( d z ) 
    \PP\left\{ \sum_{i=1}^{d-1} {\xi}_i \geqslant y - z  \right\} \ =:
        T_1 + T_2, 
\end{align*}
where $T_1$ and $T_2$ are the integrals over $(1, y-mb]$ and
$(y-mb,b)$, correspondingly. 
To estimate $T_1$, note that in this range $y-z>mb$, so we may use the
inductive assumption over $d$ to obtain 
$$
     \PP\left\{ \sum_{i=1}^{d-1}  {\xi}_i \geqslant y - z  \right\}  
     \leqslant  c_{m}^{d-1} b^{\gamma_{m}} \left( \overline{H}  (b)  \right) ^{m} \overline{H} \left (y - z - mb \right) .
$$
By (\ref{eq;CvxAls1.0})  
\begin{align*}
     \int_{1} ^ {y - m b} 
   F_{\xi} ( d z )  \overline{H} \left (y - z - mb \right)   
    \leqslant &  C \int_{1} ^ \infty
    H ( d z )  \overline{H} \left (y - z - mb \right) \\
    \leqslant & ( c_0^2 C) \overline{H} \left (y - mb\right).
\end{align*}
It follows that 
\begin{equation} \label{eq;CvxAls2.1}
  T_1 \leqslant   ( c_0^2 C) \, c_{m}^{d-1} 
      b ^ {\gamma_{m}} 
     \left( \overline{H} (b)  \right) ^{m} \overline{H} (y - mb). 
\end{equation}

To estimate $T_2$, note that in this range $(m-1)b<y-z\leqslant mb$,
and we use the inductive assumption over $m$ to write 
$$
T_2 \leqslant   c_{m-1}^{d-1} b  ^{\gamma_{m-1}}
    \left( \overline{H}(b) \right) ^ {m-1}
      \int_ {y - m b} ^ {b}
    F_ {{\xi}} ( d z ) 
     \overline{H} \big(y  - (m-1)b - z\big) .
$$
Using the same optimization under concavity argument as above shows
that 
\begin{align*}
  & \int_ {y - m b} ^ {b}
    F_ {{\xi}} ( d z ) 
     \overline{H} \big(y  - (m-1)b - z\big) \leqslant
    C\overline{H^{*2}}\big(y  - (m-1)b \bigr) \\
    \leqslant  & C  \bigl(q(b)\bigr)^2  \overline{H}(b)
                 \overline{H}(y-mb). 
\end{align*}
Therefore, 
\begin{equation} \label{eq;CvxAls3.1}
 T_2 \leqslant   C c_{m-1} ^{d-1} b ^
     {\gamma_{m-1}} \big( q(b) \big) ^ 2
    \left( \overline{H}(b) \right) ^ {m}
    \overline{H}(y - mb).
  \end{equation}
It follows from \eqref{eq;CvxAls2.1} and \eqref{eq;CvxAls3.1} that to
complete the inductive argument we only need to make the final
selection of $\gamma_m$ and $c_m$ to be so large as to satisfy 
\begin{align*}
     c_{m}^{d} \geq  (c_0^2 C) c_{m}^{d -1} + C c_{m-1}^{d-1}, \ \  \ 
  b ^ {\gamma_{m}} \geq b ^ {\gamma_{m-1}}
      \big( q(b) \big) ^ 2.    
\end{align*}
Since this can clearly be done, this completes the proof of
(\ref{eq;CvxAls0.0}). 

$(\rom2)$:
Note that 
\begin{equation} \label{eq;CvxAls5.0}
  \sum_{\Gamma_j \geqslant n^ r }
  V\left( w_n/\Gamma_j \right) 1_{\{  0 \in I_{j;n} \}} 
\eid \sum_{i=1}^{N_n}  \xi_i,
\end{equation}
where $N_n$ is a Poisson random variable with mean
$\overline{\nu}(x_0)- n^r /w_n$,  
and $\{ \xi_i\}_{i\geqslant 1}$ is a family of $i.i.d.$ random
variables independent of $N_n$, whose law is the measure $\nu$
restricted to the interval $(x_0,\overline{z}_{n})$ and normalized to
a probability measure there. Because of the range of $r$ and
\eqref{eq;asyV&G0.0} we see that  for large $n$ the event $\bigl\{
\sum_{i=1}^{d} {\xi_i} > z_n \}$ requires $d$ to be at least $m+1$. 
Therefore, in the notation of the first part of the lemma, 
by \eqref{eq;CvxAls0.0}, 
 \begin{align*}
     \PP \left\{  \sum_{i=1}^{N_n}  {\xi}_i > z_n \right \} 
    = &  \sum_{d=m+1}^ \infty 
    \PP \left\{  \sum_{i=1}^{d}  {\xi_i} > z_n \right \}  
    \PP \left\{ {N_n}  = d \right \} \\
    \leqslant &  \sum_{d=m+1}^ \infty 
    \frac{
     c_m ^ d  \overline{z}_n ^ {\gamma_m} \big(\overline{H}(\overline{z}_n) \big)^ m  \overline{H}(z_n - m \overline{z}_n)  
    }{d\,!} \\   
     \leqslant & e^{c_m}\overline{z}_n ^ {\gamma_m} 
     \big(\overline{H}(\overline{z}_n) \big)^ m  \overline{H}(z_n - m \overline{z}_n), 
 \end{align*}
 Since $\overline{z}_n\to\infty$, using $\gamma_m+1$ from the first
 part of the lemma as  $\gamma_m$ in  
 (\ref{eq;CvxAls0.1})   gives us (\ref{eq;CvxAls0.1}).
\end{proof}


\bibliographystyle{authyear}
\bibliography{CHEN}

\end{document}